\newtheorem{theorem}{Theorem}
\newtheorem{proposition}[theorem]{Proposition}
\newtheorem{definition}[theorem]{Definition}
\newtheorem{lemma}[theorem]{Lemma}
\newtheorem{remark}[theorem]{Remark}
\newtheorem{corollary}[theorem]{Corollary}
\newtheorem{fact}[theorem]{Fact}
\numberwithin{theorem}{section}
\newcommand{\btau}{\boldsymbol{\tau}}
\newcommand{\bsigma}{\boldsymbol{\sigma}}
\newcommand{\calD}{\mathcal{D}}
\newcommand{\calE}{\mathcal{E}}
\newcommand{\calF}{\mathcal{F}}
\newcommand{\calM}{\mathcal{M}}
\newcommand{\calO}{\mathcal{O}}
\newcommand{\calS}{\mathcal{S}}
\newcommand{\calT}{\mathcal{T}}
\newcommand{\calU}{\mathcal{U}}
\newcommand{\calW}{\mathcal{W}}
\newcommand{\bbE}{\mathbb{E}}
\newcommand{\bbN}{\mathbb{N}}
\newcommand{\bbP}{\mathbb{P}}
\newcommand{\ind}{\mathds 1}
\newcommand{\tv}{\mathsf{v}}
\newcommand{\tu}{\mathsf{u}}
\newcommand{\tw}{\mathsf{w}}
\newenvironment{tight_itemize}{
\begin{itemize}
  \setlength{\itemsep}{2pt}
  \setlength{\topsep}{0pt}
  \setlength{\parskip}{1pt}
  \setlength{\partopsep}{0pt}  
}{\end{itemize}}
\title{Influence of the seed in \textit{affine} preferential attachment trees}
\author{David Corlin Marchand\thanks{Laboratoire de Mathématiques d’Orsay, Univ. Paris-Sud, CNRS, Université Paris-Saclay, 91405 Orsay, France
and  
Universit\'e de Fribourg, 
23 Chemin du Mus\'ee, 
CH-1700 Fribourg, 
Switzerland.
\url{david.marchand@u-psud.fr}}
~and Ioan Manolescu\thanks{D\'epartement de Math\'ematiques, 
Universit\'e de Fribourg, 
23 Chemin du Mus\'ee, 
CH-1700 Fribourg, 
Switzerland.
\url{ioan.manolescu@unifr.ch}}}
\date\today
\begin{document}

\maketitle

\begin{abstract}
We study randomly growing trees governed by the affine preferential attachment rule. 
Starting with a seed tree $S$, vertices are attached one by one, each linked by an edge to a random vertex of the current tree, 
chosen with a probability proportional to an affine function of its degree. 
This yields a one-parameter family of preferential attachment trees $(T_n^S)_{n\geq|S|}$, of which the linear model is a particular case. 
Depending on the choice of the parameter, the power-laws governing the degrees in $T_n^S$ have different exponents. 

We study the problem of the asymptotic influence of the seed $S$ on the law of $T_n^S$.
We show that, for any two distinct seeds $S$ and $S'$, the laws of $T_n^S$ and $T_n^{S'}$ remain at uniformly positive total-variation distance as $n$ increases. 

This is a continuation of \cite{curien2015scaling}, which in turn was inspired by a conjecture of \cite{bubeck2015influence}.
The technique developed here is more robust than previous ones and is likely to help in the study of more general attachment mechanisms. 
\end{abstract}

\tableofcontents

\noindent

\section{Introduction}\label{introductionpaper}

\textit{Linear preferential attachment trees} - or \textit{Bar\'abasi-Albert trees} - are trees that grow randomly according to the following simple mechanism. 
Start with a finite tree $S$ with $k$ vertices - we call it the \textit{seed tree} and $k$ its size - and construct inductively a sequence $(T_{n}^{S})_{n \geq k}$ of random trees, where $T_{n+1}^{S}$ is obtained from $T_n^S$ by adding a new vertex and connecting it to a vertex of $T_{n}^{S}$ chosen at random with a probability proportional to its degree.
Thus the tree $T_{n+1}^S$ has one more vertex than $T_n^S$ - namely $n+1$ - and one more edge - namely~$n$.  

The reader might note that this model formalises the adage "the rich get richer" since vertices with high degrees are more likely to receive new connections. This property suits a large class of real networks (see \cite{barabasi1999emergence}) mainly because of the emergence of power laws for the sequence of degrees of the tree as its size grows, see \cite{bollobas2001degree}. 
Such laws are observed in a wide range of contexts, like social networks (webgraph, citation graph, etc \cite{Wang08}, \cite{newman2018networks}) or even in biological networks such as interaction protein networks (see \cite{middendorf2005inferring}). The degree distribution in linear preferential attachment trees has been deeply investigated;
for an extensive overview of these traditional topics in the preferential attachment model's analysis, the reader is directed to \cite[Ch.~8]{hofstad_2016}. 

In the literature, there are numerous variations in the definition of preferential attachment model. 
Here we will consider one where the new vertex is attached to an old vertex chosen with probability which is an affine function of its degree, rather than a linear one. 
Fix a parameter $\alpha > 0$ and $S$ a seed tree of size $k \geq 2$. 
The $\alpha$-PA trees grown from the seed $S$ are denoted  by $(T_{n}^{S})_{n \geq k}$ and are defined recursively as follows. 
Given $(T_{n}^{S})_{k \leq n \leq N}$ for some $N \geq k$ with $T_N^S = T$ for some fixed tree $T$, 
select randomly a vertex $u_{N}$ in $T$ with probability:
\begin{align}
	\label{probabilitylawvertexaffinePAmodel}
	\mathbb{P}(u_N = u \,\vert\,T_{k}^{S}, \dots, T_{N}^{S} \text{ with } T_{N}^{S}=T) = \frac{\deg_{T}(u)-1+\alpha}{(1+\alpha)N-2} \qquad \forall u\in T,
\end{align}
and link it to a new vertex by an edge. The resulting tree is $T_{N+1}^{S}$. 

Note that $(1+\alpha)N-2 = \sum_{u \in T} \deg_{T}(u)-1+\alpha$ for all trees $T$ of size $N$, so~\eqref{probabilitylawvertexaffinePAmodel} defines indeed a probability measure.
This model, which was first introduced in \cite{mori2002random}, is a generalisation of the Bar\'abasi-Albert model, in that the latter is obtained when choosing $\alpha = 1$. The authors of \cite{berger2005spread} remarked that transitions probabilities \eqref{probabilitylawvertexaffinePAmodel} provide a P\'{o}lya urn representation of $\alpha$-PA trees; particularly convenient properties follow, such as estimates on the degree growth and a form of exchangeability for the sequence $(u_N)_{N\geq k}$.
When the dependence on $S$ is not important, we will drop $S$ from the notation. 

\begin{center}
{\bf Henceforth $\alpha > 0$ is fixed.}
\end{center}

The same questions as for the linear model may be asked of the affine one. For instance, one may study the growth of degrees of given vertices in $T_n$. It may be proved (see \cite[Ch. 8.3 \& 8.4]{hofstad_2016} or Remark~\ref{rem:xi(u)} of this paper)  that the degree of any vertex of the seed increases at polynomial speed, with an exponent that depends on $\alpha$ and varies between $1$ and $0$ as $\alpha$ ranges from $0$ to infinity.
It may appear surprising that an apparently insignificant difference in the attachment mechanism leads to different scaling exponents in the power laws governing the degree sequence, and hence to preferential attachment models of different universality classes.
More substantial variations of the model will be discussed in Section~\ref{sec:open_problems}.

The problem of interest in this paper is that of the recognition of the seed.
Precisely, we will study whether the seed tree has any influence on the law of the tree obtained after a large number of iterations of the growth procedure. This question was asked (and partly solved) for the linear model in \cite{bubeck2015influence}. The complete answer for the linear model was obtained in \cite{curien2015scaling}, and for the uniform preferential attachment model in \cite{bubeck2017}. 
In both cases, the seed is shown to influence the asymptotic law of the model. Our aim is to generalise the result to the $\alpha$-PA, 
of which the uniform model may be perceived as a limit.

\begin{theorem}\label{thm:main}
    Let $S$ and $S'$ be two finite trees of size $k_1,k_2 \geq 3$. Then for any $\alpha >0$, the following limit:
    \begin{align}
    \label{limittotalvariationmaintheoem}
    d(S,S') = \lim\limits_{n \to +\infty} d_{TV}(T_{n}^{S},T_{n}^{S'})
    \end{align}
    exists and is non zero when $S$ is not graph-isomorphic to $S'$. 
\end{theorem}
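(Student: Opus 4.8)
The plan is to follow the strategy of \cite{curien2015scaling}: construct a statistic of the tree $T_n$ that converges in distribution as $n\to\infty$, whose limit law genuinely depends on the seed, and then deduce non-vanishing total-variation distance from the fact that the two limit laws differ. The existence of the limit in \eqref{limittotalvariationmaintheoem} is the easy part: by the Markov property of the growth process, $d_{TV}(T_n^S,T_n^{S'})$ is non-increasing in $n$ (one can couple the two processes after time $n$ using any coupling that is optimal at time $n$), so the limit exists; the content of the theorem is that it is strictly positive.

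First I would exploit the Pólya urn representation of \cite{berger2005spread}: conditionally on the sequence of ``weights'', the degree evolution of each fixed vertex of the seed is governed by a generalized Pólya urn, so that for each vertex $u$ of $S$ there is an almost sure limit $\xi(u) = \lim_n \deg_{T_n}(u)/n^{1/(1+\alpha)}$ (this is essentially Remark~\ref{rem:xi(u)}), and more generally the renormalized degrees of all seed vertices, together with the renormalized subtree sizes hanging off them, converge jointly to a non-degenerate random vector. The joint law of this limiting vector — or of a suitable further functional of it, such as a symmetric polynomial in the $\xi(u)$ that is invariant under graph automorphisms of the observed tree — is the candidate seed statistic. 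The key structural input is that this limiting law is determined by, and in fact determines, the isomorphism class of $S$: two non-isomorphic seeds produce limiting vectors (after symmetrization to kill the labelling) with different distributions. Establishing this distinguishing property is where the affine parameter $\alpha$ enters quantitatively, since the exponent $1/(1+\alpha)$ and the Dirichlet-type parameters of the limit all depend on $\alpha$.

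The main obstacle — and the place where I expect the ``more robust technique'' advertised in the abstract to be needed — is precisely showing that non-isomorphic seeds give rise to distinct limit laws, uniformly enough to conclude. The naive approach of reading off $|S|$ and the number of edges fails because those are deterministic functions of $n$; one must instead extract finer invariants. I would proceed by induction on $k_1+k_2$: pick a leaf $\ell$ of $S$ (say), condition on the event that vertex $k_1$ (the last seed vertex) is that leaf and on the first few attachment steps, and relate the limit law for $S$ to a mixture of limit laws for the smaller seed $S\setminus\{\ell\}$; a delicate point is that the affine rule changes the effective weight of a vertex when a neighbour is deleted, so the reduction is not to the same model but to a shifted one, and one must track these shifts carefully. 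Alternatively, and perhaps more cleanly, one can use the approach via \emph{broom counts} or small-subgraph frequencies: define for a fixed small tree $R$ the (renormalized) number of embedded copies of $R$ near high-degree vertices, show these converge jointly, and argue that the collection of all such limits over all $R$ separates seeds. In either route, the heart of the matter is an \emph{anti-concentration / non-degeneracy} estimate showing the relevant limiting random variables are genuinely spread out, so that a difference in their means or joint moments forces a difference in distribution, hence a positive total-variation gap that, being the limit of a non-increasing sequence, was already present for every finite $n$.
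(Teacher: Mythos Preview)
Your treatment of the existence of the limit is correct and matches the paper (Remark~\ref{remarkmaintheoremexistenceofthelimit}).

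The rest of the proposal, however, has a genuine gap. Your primary suggestion is to use the almost-sure limits $\xi(u)=\lim_n n^{-1/(1+\alpha)}\deg_{T_n}(u)$ for $u$ in the seed, and then ``symmetrize'' to get a tree statistic. The problem is that the $\xi(u)$ are indexed by \emph{seed} vertices, whereas an observer of $T_n$ sees only an unlabelled tree and does not know where the seed sits. You never explain how to pass from the seed-indexed vector $(\xi(u))_{u\in S}$ to a bona fide functional of $T_n$; ``a symmetric polynomial invariant under graph automorphisms of the observed tree'' is not a construction, and any naive choice (e.g.\ the ordered top-$k$ normalized degrees) forgets the adjacency structure of $S$, which is precisely what must be recovered when $S$ and $S'$ have the same degree sequence. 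Your inductive idea (``condition on the event that vertex $k_1$ is that leaf'') has the same defect: the conditioning is on information the observer does not have. The alternative via ``broom counts / small-subgraph frequencies'' is on the right track but is left entirely undeveloped, and the anti-concentration estimate you invoke at the end is asserted rather than proved.

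The paper follows exactly the subgraph-count route you allude to, but carries it out concretely. The observable is $\mathcal{F}_{\btau}(T)$ of~\eqref{eq:observable}, a weighted count of embeddings of a fixed decorated tree $\btau=(\tau,\ell)$ into $T$; this \emph{is} a function of the unlabelled tree. The proof of Theorem~\ref{thm:main} then reduces to two estimates: Theorem~\ref{thm:second_moment} shows $\bbE[\mathcal{F}_{\btau}(T_n^S)^2]\approx n^{2|\ell|/(1+\alpha)}$ provided every decoration satisfies $\ell(u)\geq 2$ and $|\ell|>1+\alpha$ (these conditions kill logarithmic corrections that would otherwise appear, cf.\ Remark~\ref{rem:no_log}), and Theorem~\ref{thm:difference_from_seed} produces, for any $S\neq S'$, such a $\btau$ with $\bbE[\mathcal{F}_{\btau}(T_n^S)]-\bbE[\mathcal{F}_{\btau}(T_n^{S'})]\approx n^{|\ell|/(1+\alpha)}$. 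Plugging both into the elementary Lemma~\ref{lem:d_tv_moments} gives a uniform lower bound on $d_{TV}$. The construction of the distinguishing $\btau$ (Section~\ref{observablesandtheirdifferencearoundthegraphseed}) uses a coupling to reduce the first-moment difference to embeddings lying entirely inside the seed, together with the notion of \emph{blind trees} and an explicit asymptotic for the resulting P\'olya-urn moments; none of this machinery appears in your sketch.
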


As a consequence of Theorem~\ref{thm:main}, the function $d$ is a metric on finite trees with at least 3 vertices. 
It measures the statistical chance of distinguishing between two different seed trees given observations of the $\alpha$-PA trees grown from them.

\begin{remark}\label{remarkmaintheoremexistenceofthelimit}
    The existence of the limit in Theorem~\ref{thm:main} stems from the fact that the sequence 
    $d_{TV}(T_{n}^{S},T_{n}^{S'})$ is decreasing in $n$, which may be proved by a coupling argument. 
\end{remark}

In proving Theorem~\ref{thm:main}, we will look at the total variation between $T_{n}^{S}$ and $T_{n}^{S'}$ through some integer-valued observable obtained from them. The same is done in all previous studies \cite{bubeck2015influence,curien2015scaling,bubeck2017}. 

In \cite{bubeck2015influence}, where the model is linear (that is $\alpha = 1$) and $S$ and $S'$ are assumed to have different degree sequences, the authors use as observable the maximum degree. 
Indeed, they show that the maximal degrees of $T_n^{S}$ and $T_n^{S'}$ have different tail distributions, which then easily leads to~$d(S,S')>0$.
The key to this argument is that the degrees of vertices in the Bar\'abasi-Albert model evolve as a P\'olya urn. 
When the degree distributions of $S$ and $S'$ are identical, but their geometry is different, the observable of \cite{bubeck2015influence} is unable to distinguish between $T_n^{S}$ and $T_n^{S'}$. To overcome this difficulty, a more complex observable was introduced in \cite{curien2015scaling}: the number of embeddings of a fixed finite tree $\tau$ inside $T_n^{S}$, weighted by some function of the degrees in $T_n^S$ of the embedding of $\tau$. 
The same type of observable was used in \cite{bubeck2017} and will be used below. 

For $n \geq 0$ and $d \geq 1$, set $[n]_{d} = n(n-1)\dotsc(n-d+1)$. Also set $[n]_{0} = 1$ for any $n\geq 0$.
A {\em decorated tree} is a couple $\btau = (\tau,\ell)$ where $\tau$ is a tree and $\ell$ is function from the vertices of $\tau$ to the set of non-negative integers. For a decorated tree $\btau$ and a (bigger) tree $T$, set 
\begin{align}\label{eq:observable}
	\mathcal{F}_{\btau}(T):= \sum_{\phi: \tau \hookrightarrow T} \,\prod_{u \in \tau} [\deg_{T}(\phi(u))-1]_{\ell(u)},
\end{align}
where the sum is over all graph embeddings of $\tau$ in $T$ and the product is over all vertices of $\tau$. 

For two distinct seeds $S$ and $S'$, 
we aim to show that, for a well chosen decorated tree $\btau$,
the difference of the expectations of $\mathcal{F}_{\btau}(T_n^{S})$ and $\mathcal{F}_{\btau}(T_n^{S'})$ 
is of the same order as each of them and as their standard deviation. 
This allows to control the total variation between $T_n^{S}$ and $T_n^{S'}$ using the following bound. 

\begin{lemma}\label{lem:d_tv_moments}
	For any two real-valued, square-integrable random variables $X$ and $Y$,
	\begin{align*}
    d_{TV}(X,Y) \geq 
    \frac{(\mathbb{E}[X]-\mathbb{E}[Y])^2}{(\mathbb{E}[X]-\mathbb{E}[Y])^2 + 2 \cdot \Big( \mathbb{E}[ X^2] + \mathbb{E}[ Y^2]\Big)}.
    \end{align*}
\end{lemma}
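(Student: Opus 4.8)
The plan is to prove the inequality directly, by exhibiting a test function (coupling/Cauchy–Schwarz argument). Recall that for real random variables, $d_{TV}(X,Y) = \sup_{A} |\mathbb{P}(X\in A) - \mathbb{P}(Y\in A)|$, where the supremum is over Borel sets $A \subseteq \mathbb{R}$; equivalently, $d_{TV}(X,Y) = \sup_{\|f\|_\infty \le 1, f \ge 0} (\mathbb{E}[f(X)] - \mathbb{E}[f(Y)])$ with $f$ ranging over $[0,1]$-valued measurable functions. The strategy is to pick a good $f$ — namely an indicator or a truncated linear function — that separates the means of $X$ and $Y$, and then bound the loss incurred by truncation using the second moments via Markov's (Chebyshev-type) inequality.

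\medskip

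\noindent\textbf{Key steps.}
First I would assume without loss of generality that $\mathbb{E}[X] \ge \mathbb{E}[Y]$ (otherwise swap the two), and set $\Delta := \mathbb{E}[X] - \mathbb{E}[Y] \ge 0$; if $\Delta = 0$ the bound is trivial since the right-hand side is $0$, so assume $\Delta > 0$. Next, for a threshold $M > 0$ to be optimised, consider the function $f_M(t) = \min\{|t|/M, 1\}$ — or more simply the indicator $\ind_{\{|t| \ge M\}}$ together with a bound on the bulk. The cleaner route: observe that $|\mathbb{E}[X] - \mathbb{E}[Y]| = |\mathbb{E}[X - X\ind_{|X|\le M}] + \mathbb{E}[X\ind_{|X|\le M} - Y\ind_{|Y|\le M}] + \mathbb{E}[Y\ind_{|Y|\le M} - Y]|$. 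The middle term is at most $M \cdot d_{TV}(X,Y)$ because $t \mapsto t\ind_{|t|\le M}$ is... actually not quite a valid test function for $d_{TV}$ since it is not $[0,1]$-valued; so I would instead write $\mathbb{E}[X\ind_{|X|\le M}] = \int_0^M \big(\mathbb{P}(X > s) - \mathbb{P}(X < -s)\big)\,ds$ via the layer-cake formula, and likewise for $Y$, so that $|\mathbb{E}[X\ind_{|X|\le M}] - \mathbb{E}[Y\ind_{|Y|\le M}]| \le 2M\, d_{TV}(X,Y)$. For the tail terms, $|\mathbb{E}[X - X\ind_{|X|\le M}]| = |\mathbb{E}[X\ind_{|X|>M}]| \le \mathbb{E}[|X|\ind_{|X|>M}] \le \mathbb{E}[X^2]/M$ by Cauchy–Schwarz or simply $|X| \le X^2/M$ on $\{|X|>M\}$, and similarly for $Y$. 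Combining, $\Delta \le 2M\, d_{TV}(X,Y) + (\mathbb{E}[X^2] + \mathbb{E}[Y^2])/M$. Finally I would optimise over $M$: choosing $M = \sqrt{(\mathbb{E}[X^2]+\mathbb{E}[Y^2])/(2\,d_{TV}(X,Y))}$ (valid when $d_{TV}>0$, the case $d_{TV}=0$ forcing $\Delta=0$) yields $\Delta \le 2\sqrt{2\, d_{TV}(X,Y)\,(\mathbb{E}[X^2]+\mathbb{E}[Y^2])}$, hence $\Delta^2 \le 8\, d_{TV}(X,Y)(\mathbb{E}[X^2]+\mathbb{E}[Y^2])$, and rearranging gives a bound of the claimed shape. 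To land exactly on the stated constants, I would instead not fully optimise but plug the specific $M$ that makes the two sides match; writing $D := d_{TV}(X,Y)$ and $Q := \mathbb{E}[X^2]+\mathbb{E}[Y^2]$, one checks that the inequality $D \ge \Delta^2/(\Delta^2 + 2Q)$ is equivalent to $D(\Delta^2 + 2Q) \ge \Delta^2$, i.e. $2DQ \ge (1-D)\Delta^2$; and from $\Delta \le 2MD + Q/M$ with the choice $M = \Delta/(2D)$ one gets $\Delta \le \Delta + 2DQ/\Delta$... so I would rather take $M$ such that $2MD = \tfrac12\Delta$ hence... The honest approach is to substitute $M=\sqrt{Q/(2D)}$ only if it helps; otherwise one notes $\Delta \le 2MD + Q/M$ for all $M>0$ means $\Delta \le \min_{M>0}(2MD+Q/M) = 2\sqrt{2DQ}$, giving $\Delta^2 \le 8DQ \le 2DQ/D \cdot \ldots$; I will reconcile the constant in the write-up — it follows from the AM–GM optimisation together with the elementary fact that $D \le 1$, which upgrades $\Delta^2 \le 8DQ$ to the sharper quotient form by noting $\Delta^2(1-D) \le \Delta^2 \le 8DQ$ needs $D \le \tfrac14$... this last reconciliation is the only delicate point.

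\medskip

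\noindent\textbf{Main obstacle.} The genuine mathematical content — layer-cake representation of truncated means, the tail bound $\mathbb{E}[|X|\ind_{|X|>M}] \le \mathbb{E}[X^2]/M$, and the AM–GM optimisation in $M$ — is entirely routine. The one place requiring care is matching the \emph{exact} constants $2$ in the denominator of the statement: the naive optimisation yields $\Delta^2 \le 8\,d_{TV}\,(\mathbb{E}[X^2]+\mathbb{E}[Y^2])$, which is weaker by a constant than the quotient form. To recover the stated bound I would choose $M$ not at the unconstrained optimum but at the value $M = \Delta\big/\big(2\,d_{TV}(X,Y)\big)$ coming from the layer-cake estimate with equality-case bookkeeping, or alternatively start from $\mathbb{E}[(X-c)^2]$ and $\mathbb{E}[(Y-c)^2]$ for a well-chosen centering $c$ (e.g. $c = (\mathbb{E}[X]+\mathbb{E}[Y])/2$) so that the truncation is symmetric around $c$ and the cross term is controlled by exactly $2M\,d_{TV}$ with the factor $2$ accounting for both tails; this symmetric centering is what produces the clean factor $2$ in the paper's denominator. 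I expect the write-up to be about half a page once the constant-chasing is done carefully.
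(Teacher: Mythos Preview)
Your approach is genuinely different from the paper's, and as you yourself recognise, it does not reach the exact constant in the statement. The paper's proof is a three-line coupling argument: for any coupling $(X',Y')$ of $X$ and $Y$, apply Paley--Zygmund to $Z=|X'-Y'|$ to get $\mathbb{P}(X'\neq Y')\ge (\mathbb{E}|X'-Y'|)^2/\mathbb{E}[(X'-Y')^2]$, then Jensen gives $\mathbb{E}|X'-Y'|\ge|\mathbb{E}X-\mathbb{E}Y|$, and finally one bounds $\mathbb{E}[(X'-Y')^2]\le(\mathbb{E}X-\mathbb{E}Y)^2+2(\mathrm{Var}X+\mathrm{Var}Y)\le(\mathbb{E}X-\mathbb{E}Y)^2+2(\mathbb{E}X^2+\mathbb{E}Y^2)$ by the elementary decomposition $X'-Y'=(X'-\mathbb{E}X)+(\mathbb{E}X-\mathbb{E}Y)-(Y'-\mathbb{E}Y)$. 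Taking the infimum over couplings yields the lemma with the stated constants immediately.

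Your truncation-and-optimise route yields $\Delta^2\le 8\,d_{TV}\,(\mathbb{E}X^2+\mathbb{E}Y^2)$, which is strictly weaker than the lemma: since $\Delta^2\le 2(\mathbb{E}X^2+\mathbb{E}Y^2)$ always (Cauchy--Schwarz), the paper's denominator $\Delta^2+2Q$ is at most $4Q<8Q$. None of your proposed fixes closes this gap. Choosing $M=\Delta/(2D)$ in $\Delta\le 2MD+Q/M$ gives the vacuous $\Delta\le\Delta+2DQ/\Delta$; centering at $c=(\mathbb{E}X+\mathbb{E}Y)/2$ does not change the $2M$ factor in the layer-cake bound, because the test function $t\mapsto t\ind_{|t-c|\le M}$ still has oscillation $2M$. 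The factor $2$ in the denominator of the paper's bound comes from the coupling inequality $\mathbb{E}[(X'-Y')^2]\le 2(\mathbb{E}X^2+\mathbb{E}Y^2)+\Delta^2$, not from any truncation argument, and I do not see how to recover it along your lines.

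That said, for the sole use of this lemma in the paper (bounding $d_{TV}$ away from zero when first-moment difference and second moments are all of the same polynomial order), your weaker inequality $d_{TV}\ge\Delta^2/(8Q)$ is entirely sufficient. So your argument proves a serviceable substitute lemma, just not the lemma as stated; if you want the exact statement, switch to the coupling/Paley--Zygmund proof.
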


\begin{proof}[Proof of Lemma~\ref{lem:d_tv_moments}]
Let $(X',Y')$ be a coupling of the random variables $X$ and $Y$. By using Paley-Zigmund's inequality, then Jensen's one, we get:
\begin{align*}
\bbP(X' \neq Y') \geq \frac{(\bbE[X']-\bbE[Y'])^2}{\bbE[(X'-Y')^2]}.
\end{align*}
Furthermore, a simple decomposition gives:
\begin{align*}
\bbE[(X'-Y')^2] =& \ \bbE[\big(X'-\bbE[X']+\bbE[X']-\bbE[Y']+\bbE[Y']-Y'\big)^2] \\ \leq& \ 2 \cdot \big(\textrm{Var}(X') + \textrm{Var}(Y')\big) + \big(\bbE[X']-\bbE[Y']\big)^2 \\ \leq& \ 2 \cdot \big(\bbE[(X')^2] + \bbE[(Y')^2] \big) + \big(\bbE[X']-\bbE[Y']\big)^2.
\end{align*}
It finally remains to note that $X'$, resp. $Y'$, has the same moments as $X$, resp. $Y$, and that the total variation between $X$ and $Y$ is obtained by taking the infimum of $\bbP(X' \neq Y')$ over all couplings $(X',Y')$ of $X$ and $Y$.
\end{proof}

It comes to no surprise that the evolution of the moments of $\calF_{\btau}(T_n^S)$ is of great importance for the proof. 
Let us fix $\btau$ and $S$ and discuss briefly the first moment of $\calF_{\btau}(T_n^S)$ as $n \to \infty$. 
There are two competing factors contributing to $\calF_{\btau}(T_n^S)$. 
First, due to the "rich gets richer" phenomenon, the "oldest" vertices of $T_n^S$ have degree evolving as $n^{\frac{1}{1+\alpha}}$. 
Thus, the contribution to $\bbE[\calF_{\btau}(T_n^S)]$ of embeddings included in the seed (or close to it) 
is of the order $n^{\frac{|\ell|}{1+\alpha}}$ for $|\ell| =\sum_{v\in \tau} \ell(v)$.

Second, one should take into account the embeddings using recently acquired vertices. 
There are many such vertices, but they have small degrees. 
Depending on the form of $\btau$, such embeddings contribute to $\bbE[\calF_{\btau}(T_n^S)]$ by 
a quantity that may exceed $n^{\frac{|\ell|}{1+\alpha}}$ by some logarithmic or even polynomial factor. 
Thus we find
\begin{align}\label{eq:first_moment1}
	\mathbb{E}[\mathcal{F}_{\btau}(T_{n})] \approx \log^{\gamma(\btau)}(n) \cdot n^{\frac{\lambda(\btau)}{1+\alpha}},
\end{align}
for some $\gamma(\btau) \geq 0$ and $\lambda(\btau) \geq |\ell|$. 
Both values $\gamma(\btau)$ and $\lambda(\btau)$ are computed explicitly in Section~\ref{sec:moments}.

Due to the coupling between $\alpha$-PA started from different seeds (see Section~\ref{sec:planar_PA}), only the first type of embeddings is  sensitive to the seed. 
Thus, for two distinct seeds $S,S'$, we may expect that 
\begin{align}\label{eq:first_moment_difference}
	\mathbb{E}[\mathcal{F}_{\btau}(T_{n}^{S})] - \mathbb{E}[\mathcal{F}_{\btau}(T_{n}^{S'})]
	\approx n^{\frac{|\ell|}{1+\alpha}},
\end{align}
provided that a difference exists. 
Let us, for the sake of this explanation, assume that the standard deviations 
of $\mathcal{F}_{\btau}(T_{n}^{S})$ and $\mathcal{F}_{\btau}(T_{n}^{S'})$
are of the same order as their expectations. 
Then, in order to successfully apply Lemma~\ref{lem:d_tv_moments}, we should have $\gamma = 0$ and $\lambda(\btau) = |\ell|$. 

Such asymptotics (slightly weaker actually) were already observed in \cite{curien2015scaling} for $\alpha=1$; 
they were proved using an amenable recursive relation for $\mathbb{E}[\mathcal{F}_{\btau}(T_{n})]$. 
Rather than showing that $\btau$ may be chosen to satisfy~\eqref{eq:first_moment_difference} 
and so that $\gamma(\btau) = 0$ and $\lambda(\btau) = |\ell|$, 
the authors of \cite{curien2015scaling} constructed a linear combination over trees $\btau$ of observables $\mathcal{F}_{\btau}(T_{n})$ for which the logarithmic factors cancel out. The resulting observable, properly rescaled, turns out to be a martingale that is bounded in $L^2$ and whose expectation depends on the seed tree. 

We will employ a different, arguably simpler strategy: we will prove that for any two distinct seeds $S\neq S'$, the decorated tree $\btau$ may be chosen so as to observe a difference as in~\eqref{eq:first_moment_difference} and such that $\gamma(\btau) = 0$ and $\lambda(\btau) = |\ell|$.
As such, our strategy is more likely to apply to other attachment mechanisms, as will be discussed in Section~\ref{sec:open_problems}.
Additional differences with \cite{curien2015scaling}, are the recurrence formula used to prove~\eqref{eq:first_moment1}, which is more complex
in the affine case, and the fact that the exponent $\lambda(\btau)$ is not always equal to $|\ell|$, as opposed to the linear case, where $\lambda(\btau) = |\ell|$ always.

Let us finally mention that the exact definition \eqref{eq:observable} of the observable is somewhat arbitrary. 
Indeed, it is also possible to use slight variations instead, such as 
$\sum_{\phi: \tau \hookrightarrow T} \prod_{u \in \tau} (\deg_{T}\phi(u))^{\ell(u)}$.
We chose \eqref{eq:observable} as it is inspired by the equivalent construction in \cite{curien2015scaling}
and has an interpretation in the planar version of the model (see Section \ref{sec:planar_PA}).

\paragraph{Organisation of the paper}
In Section~\ref{sec:planar_PA}, we introduce a planar version of the affine preferential attachment model. This construction is not formally necessary to prove our main result, but we believe it is interesting in its own right and helps clarify the subsequent proofs. 
In particular, the planar version will give rise to a natural coupling between two $\alpha$-PA trees starting from two different seeds of same size. 

In Section~\ref{sec:moments} we study the asymptotics of the first and second moments of $\mathcal{F}_{\btau}(T_n^S)$ as $n\to \infty$. The only role of this section within the proof of Theorem~\ref{thm:main} is to show a bound on the second moment of $\calF_{\btau}(T_n^S)$ for certain trees $\btau$.
These estimates use a recurrence relation for $\mathcal{F}_{\btau}(T_n^S)$, which is similar to that of \cite{curien2015scaling}, but more complicated due to the affine probabilities.

Section~\ref{observablesandtheirdifferencearoundthegraphseed} contains lower bounds on the first-moment difference of $\mathcal{F}_{\btau}(T_n^{S})$ for two distinct seeds $S$. Precisely, we prove~\eqref{eq:first_moment_difference} for well-chosen decorated trees $\btau$. 
This is then used to prove Theorem~\ref{thm:main}. 

Finally, certain variations of the model and potentiel extensions of our result are discussed in Section~\ref{sec:open_problems}.

\paragraph{Notation}
In the rest of the paper, we will use the following notations:
\begin{tight_itemize}\itemsep 0pt
    \item for two sequences $f,g:\bbN \to (0,+\infty)$, 
    write $f(n) \approx g(n)$ if there exists some constant  $C > 0$ such that  $\frac1C \leq \frac{f(n)}{g(n)} \leq C$; 
    \item also $f(n) = \mathcal{O}\big(g(n)\big)$ if $\limsup\limits_{n \to +\infty} \ \lvert \frac{f(n)}{g(n)} \rvert < +\infty$;
    \item and $f(n) \ll g(n)$ if $\lim\limits_{n \to +\infty} \lvert \frac{f(n)}{g(n)} \rvert = 0$.
    \item The size of a tree $ T$ is written $|T|$ and stands for the number of vertices of $ T$. 
    \item For a graph $S$, write $V_S$ and $E_S$ for its sets of vertices and edges, respectively. 
    For $v \in V_S$, write $\deg_S(v)$ or $\deg(v)$ for the degree of $v$ in $S$. 
\end{tight_itemize}

\paragraph{Acknowledgements} 
The authors thank Nicolas Curien for helpful discussions and corrections on an earlier version of this paper. 

The first author is supported by a PhD Fellowship from the MESR, France and was supported in the early stages of this project by the NCCR SwissMAP.  
The second author is a member of the NCCR SwissMAP.  


\section{Planar affine preferential attachment model}\label{sec:planar_PA}

In this section, we first define the planar version of affine preferential attachment trees,
then we present a useful coupling between any two $\alpha$-PA trees derived from different seed trees of same size. 
The coupling is based on the decomposition of $T_n^S$ into \textit{planted plane trees}, 
which isolates the roles of the seed and of the growth mechanism, respectively. 

We reiterate that this planar version is not formally necessary for any of the results; it is used merely to illustrate the arguments. 

\subsection{Definition via corners}\label{sec:corners}

We first need to introduce the notion of \textit{colouring of corners} in a plane tree. A plane tree is a tree embedded in the plane up to continuous deformation, or equivalently a tree whose vertices are equipped with a cyclic order of their neighbours.
A \textit{corner} of a plane tree is an angular sector of the plane contained between two consecutive half-edges around a vertex. 
In particular, the number of corners surrounding a vertex is equal to its degree.

\begin{definition}[Colouring of corners]\label{defblueredcoins}
    A colouring of the corners of a plane tree $\mathcal T$ is a choice of one distinguished corner for each vertex of the tree. 
    The distinguished corners are called ``red''; all other corners are called ``blue''.
    
	When indexing $\{c_{v,i}:\, v\in V_{ T}, 1 \leq i \leq \deg(v_i)\}$ the corners of $\mathcal T$,
	we will always suppose that the red vertices are $(c_{v,1})_{v \in V_{\mathcal T}}$. 
\end{definition}

It will be useful to keep in mind that any plane tree with $n$ vertices and coloured corners has $n$ red corners and $n-2$ blue ones. 
We are now ready to introduce the planar version of affine preferential attachment model. 

\begin{definition}
Fix a parameter $\alpha > 0$ and a plane tree $S$ - namely the \textit{seed tree} - of size $\vert S \vert = k \geq 2$ endowed with a colouring of its corners. The planar $\alpha$-PA with seed $S$ is a growing sequence of random plane trees $(T_{n}^{S})_{n \geq k}$ (or $(T_{n})_{n \geq k}$ when the role of $S$ is clear) 
with coloured corners, evolving according to the following inductive principle:
\begin{tight_itemize}
	\item[(i)] $T_{k} = S$;
	\item[(ii)] Suppose that our sequence of plane trees is built until step $n \geq k$. 
    Then, independently from the past of the process, select at random a corner $c_n$ of $T_n$ with 
    \begin{align*}
    \bbP (c_n = c_{u,1}) = \frac{\alpha}{n  (1+\alpha) - 2} 
    \quad \text{ and } \quad  \bbP (c_n = c_{u,i}) = \frac{1}{n  (1+\alpha) - 2} \,\, \, \forall 2 \leq i \leq \deg_{T_n} (u),
    \end{align*}
    for any vertex $u \in V_{T_n}$.
    That is, red corners are chosen with probability proportional to $\alpha$ and blue corners with probability proportional to $1$.
    \item[(iii)] Write $u_n$ for the vertex of $T_n$ of which $c_n$ is a corner. 
    To obtain $T_{n+1}$ add a new vertex $v_{n+1}$ to $T_n$ and we connect it by an edge to $u_n$ through the corner $c_n$. 
    \item[(iv)] All corners of $T_n$ except $c_n$ subsist in $T_{n+1}$ and conserve their colour. 
    The unique corner $c_n''$ of the new vertex $v_{n+1}$ is coloured red, as required by Definition~\ref{defblueredcoins}. 
    The randomly selected corner $c_n$ is divided in two angular areas by the edge connecting $u_n$ to $v_{n+1}$. 
    If $c_n$ was blue in $T_n$, then both these corners of $T_{n+1}$ are blue. 
    If $c_n$ was red in $T_n$, we arbitrarily colour the corner to the right of the edge $(u_n,v_{n+1})$ in red and the other in blue. 
   	\end{tight_itemize}
\end{definition}

The probabilities appearing in step (ii) indeed sum to 1 since:
\begin{align}
	\underbrace{n}_{\textrm{number of red corners}} \cdot \ \frac{\alpha}{n \cdot (1+\alpha) - 2} + \underbrace{(n-2)}_{\textrm{number of blue corners}} \cdot \ \frac{1}{n \cdot (1+\alpha) - 2} =1 \nonumber
\end{align}

Observe that, the attachment principle in the planar $\alpha$-PA is such that the probability for $v_{n+1}$ to be attached to a vertex $u$ of $T_n$ is $\frac{\deg_{T_n}(u)-1+\alpha}{n \cdot (1+\alpha)-2}$. Thus, we obtain :

\begin{lemma}
	If $(T_n)_{n \geq k}$ is a planar $\alpha$-PA started from some plane seed tree $S$, 
	then the trees $(T_n)_{n \geq k}$ (stripped of their planar embedding) have the law of a $\alpha$-PA. 
\end{lemma}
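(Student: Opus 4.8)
The plan is to exhibit an explicit coupling between the corner-selection procedure of the planar $\alpha$-PA and the vertex-selection rule \eqref{probabilitylawvertexaffinePAmodel} of the (non-planar) $\alpha$-PA, and then to proceed by induction on $n$. The key observation, already recorded just before the statement, is that for a vertex $u \in V_{T_n}$ the event $\{u_n = u\}$ — i.e. the new vertex $v_{n+1}$ is attached to $u$ — is the disjoint union of the events $\{c_n = c_{u,1}\}$ (the red corner of $u$) and $\{c_n = c_{u,i}\}$ for $2 \leq i \leq \deg_{T_n}(u)$ (the blue corners of $u$). Summing the probabilities from step (ii), a vertex $u$ has one red corner contributing $\frac{\alpha}{n(1+\alpha)-2}$ and $\deg_{T_n}(u)-1$ blue corners each contributing $\frac{1}{n(1+\alpha)-2}$, so
\begin{align*}
\bbP(u_n = u \mid T_n) = \frac{\alpha + (\deg_{T_n}(u)-1)}{n(1+\alpha)-2} = \frac{\deg_{T_n}(u)-1+\alpha}{n(1+\alpha)-2},
\end{align*}
which is exactly \eqref{probabilitylawvertexaffinePAmodel} with $N=n$.

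First I would set up the induction. The base case is immediate: $T_k = S$ as a plane tree projects to $S$ as an abstract tree, which is the seed of the corresponding $\alpha$-PA, since $|S| = k$. For the inductive step, I would condition on the planar process up to time $n$ and let $\overline{T}_n$ denote the underlying abstract tree (the plane tree stripped of its embedding). Since the corner $c_n$ is selected independently of the past given $T_n$, and the underlying tree $\overline{T}_{n+1}$ is obtained from $\overline{T}_n$ by attaching a new leaf to $u_n$, the displayed computation shows that the conditional law of $\overline{T}_{n+1}$ given $\overline{T}_n$ coincides with the transition rule of the $\alpha$-PA. One subtlety to address cleanly: the planar state $T_n$ carries strictly more information than $\overline{T}_n$, so I would note that the relevant conditional probability $\bbP(u_n = u \mid T_n)$ depends on $T_n$ only through $(\deg_{T_n}(u))_{u}$, hence $\overline{T}_n$ is itself a Markov chain with the claimed transitions; formally one integrates out the extra planar data. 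Then $(\overline{T}_n)_{n \geq k}$ has the same finite-dimensional distributions as a (non-planar) $\alpha$-PA started from $S$, which is the assertion.

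I do not expect any real obstacle here; the statement is essentially a bookkeeping lemma and the colouring bookkeeping in step (iv) — how a selected corner splits into two and how colours are reassigned — plays no role in the projected process, since it only affects the planar decoration and not the underlying tree. The one point requiring a word of care is the Markov/measurability remark above (that conditioning on the full planar history is the same as conditioning on $\overline{T}_n$ for the purpose of the transition law), but this follows immediately from the fact that step (ii) uses only the degrees of $T_n$ and is independent of everything else. I would therefore present the proof as: base case, the one-line degree sum identity, and the induction, explicitly invoking that the splitting/recolouring rules in (iv) do not alter the underlying graph structure.
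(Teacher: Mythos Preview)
Your proposal is correct and follows essentially the same approach as the paper: the paper simply records, just before the lemma, that summing the corner probabilities around a vertex $u$ yields $\frac{\deg_{T_n}(u)-1+\alpha}{n(1+\alpha)-2}$, and treats the lemma as an immediate consequence. Your write-up is a slightly more explicit version of the same argument, with the induction and the Markov remark spelled out; nothing there is superfluous or incorrect.
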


\begin{remark}\label{initialcolouredlabellingisarbitraryandhasnoimportance}
    Changing the rules of the assignment of labels and colours to the new corners of a planar $\alpha$-PA 
    affects the law of the plane tree, 
    but only via its embedding. 
    
    In the special case $\alpha = 1$, the weights of red and blue corners are equal, 
    and the planar $\alpha$-PA is simply a uniform embedding of the abstract $\alpha$-PA. 
    This is not the case when $\alpha \neq 1$. For more on this see \cite[Conj. 2]{curien2015scaling}.
\end{remark}

As mentioned in the introduction, the observables $\calF_{\btau}$ have a particular interpretation in terms of the planar model. 
Let $\btau = (\tau,\ell)$ be a decorated tree and $T$ be a plane tree with coloured corners. 
Then, for any embedding $\phi:\tau \hookrightarrow T$, the factor $\prod_{u \in \tau} [\deg_{T}(\phi(u))-1]_{\ell(u)}$
is the number of ordered choices of $\ell(u)$ different blue corners around each vertex $\phi(u)$ for $u\in \tau$. 
As in \cite{curien2015scaling}, one may imagine that each vertex $u \in \tau$ is endowed with $\ell(u)$ distinct arrows. 
Call a decorated embedding an embedding of $\phi$ of $\tau$ in $T$ together with, for each $u \in \tau$ and each arrow of $u$, 
a blue corner of $\phi(u)$ to which that arrow points, in such a way that distinct arrows point to distinct corners. 
Then $\calF_{\btau}(T)$ is the number of decorated embeddings of $\tau$ in $T$.

\subsection{Decomposition and coupling using planted plane trees}\label{sec:coupling}

Apart from its intrinsic interest, the decomposition described below will be used in Section~\ref{observablesandtheirdifferencearoundthegraphseed}. 
We begin with the definition of a planted plane tree:

\begin{definition}\label{definitionplanarplantedtree}
	A planted plane tree $T_{\multimap}$ is a plane tree $T$ 
	with a distinguished vertex called the root and an additional half-edge emerging from the root. 
	This half edge divides the corner of the root 
	delimited by the two edges linking the root to its leftmost and its rightmost children. 
	When $\lvert T\rvert=1$, the planted plane tree $T_{\multimap}$ is merely a single vertex with an half-edge attached to it.
\end{definition} 

It should be noted that there is one more corner in any planted version $T_{\multimap}$ of a plane tree $T$.
A colouring of corners for a planted plane tree $T_{\multimap}$ is defined as for plane trees (see Definition~\ref{defblueredcoins}) with the exception that the root is allowed to have no red corners or one red corner, which will always be the corner to the right of the half-edge. 
In the former situation, we say that $T_{\multimap}$ is a \textit{\textbf{blue planted plane tree}}, in the latter we say it is a \textit{\textbf{red planted plane tree}}. 

Since the recursive procedure used to define the planar $\alpha$-PA tree is simply based on the colouring of corners, 
we straightforwardly adapt it to define a planted planar version of the same model. 
Note by $T_{n}^{\multimap b}$, resp. $T_{n}^{\multimap r}$, a blue planted plane tree, resp. a red planted plane tree, obtained through the preferential attachment algorithm with initial condition $\multimap$, which is the tree composed of a single vertex with a half-edge attached to it and a blue corner, resp. a red corner, surrounding it.

Let $S$ be a plane seed tree of size $k \geq 2$ with coloured corners indexed $\{c_{v,i}:\, v \in V_S, 1 \leq i \leq \deg_S(v)\}$. 
Fix $n \geq k$ and $T_{n}$ be a realisation of the planar $\alpha$-PA tree at step $n$ starting from~$S$. 
For~$v \in V_S$ and $1 \leq i \leq \deg_S(v)$, say that a vertex $u \in V_{T_n} \setminus V_S$ is a descendent of the corner~$c_{v,i}$ 
if the unique path linking $u$ to $S$ arrives at $S$ through $c_{v,i}$. 
Additionally, say that $v$ is also a descendent of $c_{v,i}$. 
The tree $T_n^{v,i}$ stemming from $c_{v,i}$ is the planted plane tree
containing all vertices of $T_n$ that are descendent of $c_{v,i}$
and all edges of $T_n$ between such vertices; it is rooted at~$v$. 
The half-edge attached to $v$ is such that it does not split the corner $c_{v,i}$. 
Call these trees the planted plane subtrees of $T_n^S$. 
See Figure~\ref{decompositionplantedplanartreefig} for an illustration.

\begin{proposition}\label{fundamentaldecompositionplantedplanartrees}
    Let $S$ be a plane seed tree of size $k \geq 2$ with coloured corners indexed $\{c_{v,i}:\, v \in V_S, 1 \leq i \leq \deg_S(v)\}$. 
    For $n\geq k$, let $T_{n}^{v,i}$ be the planted plane tree stemming from $c_{v,i}$ and let $k_n^{v,i}$ denote its size.
    Write $x_{n}^{v,1} = (1+\alpha)k_n^{v,1} - 1$ and $x_{n}^{v,i} = (1+\alpha)k_n^{v,i} - \alpha$ for all $v \in V_S$ and $i \geq 2$. 
    Then 
    \begin{tight_itemize}
    \item  the vector $(x_n^{v,i})_{v \in V_S, 1 \leq i \leq \deg_S(v)}$ has the distribution of a P\'olya urn with $2k-2$ colours and
    diagonal replacement matrix $(1+\alpha) I_{2k-2}$, starting from $(\alpha \ind_{\{i =1\}} + \ind_{\{i\neq 1\}})_{v \in V_S, 1 \leq i \leq \deg_S(v)}$;
    \item conditionally on $(x_n^{v,i})_{v \in V_S, 1 \leq i \leq \deg_S(v)}$, the trees $(T_{n}^{v,i}))_{v \in V_S, 1 \leq i \leq \deg_S(v)}$ are independent, with $T_{n}^{v,i}$ having law $T_{k_n^{v,i}}^{\multimap r}$ if $i =1$ and $T_{k_n^{v,i}}^{\multimap b}$ if $i \neq 1$. 
    \end{tight_itemize} 
\end{proposition}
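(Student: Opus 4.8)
The plan is to follow, step by step, how each newly attached vertex is apportioned among the $2k-2$ planted plane subtrees $T_n^{v,i}$, and to recognise both the apportioning rule and the internal growth of each subtree as the laws claimed. \emph{Step 1 (corner partition and weight count).} I would first check that the corners of $T_n$ are partitioned among the subtrees $(T_n^{v,i})_{v\in V_S,\,1\le i\le \deg_S(v)}$: every corner of a vertex $u\notin V_S$ belongs to the unique subtree containing $u$, while the corners of a vertex $v\in V_S$ split according to which of the corners $c_{v,1},\dots,c_{v,\deg_S(v)}$ of $S$ they descend from, the corners descending from $c_{v,i}$ being exactly the corners of the root of $T_n^{v,i}$ --- and here the one extra corner created by the planting half-edge is precisely what makes the counts match. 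Next, using that each vertex carries exactly one red corner except the root of a blue planted tree, which carries none, a red (resp.\ blue) planted plane tree of size $m$ has $m$ red and $m-1$ blue corners (resp.\ $m-1$ red and $m$ blue corners). Assigning weight $\alpha$ to red corners and $1$ to blue ones, as in step (ii) of the definition, the total weight carried by $T_n^{v,1}$ is $(1+\alpha)k_n^{v,1}-1=x_n^{v,1}$ and that carried by $T_n^{v,i}$ for $i\ge 2$ is $(1+\alpha)k_n^{v,i}-\alpha=x_n^{v,i}$, and summing these gives total weight $(1+\alpha)n-2$. A small point to settle is that $T_n^{v,i}$ is a red planted tree if and only if $i=1$: by the recolouring rule (iv), the unique red corner of a vertex $v\in V_S$ never leaves the family of corners descending from $c_{v,1}$.

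\emph{Step 2 (the urn).} Attaching the $(n{+}1)$-st vertex through a corner lying in $T_n^{v,i}$ raises $k_n^{v,i}$ by one, hence $x_n^{v,i}$ by $1+\alpha$, and leaves all other coordinates unchanged. By Step 1 and the attachment probabilities of step (ii), the probability that the selected corner lies in $T_n^{v,i}$ equals $x_n^{v,i}/\big((1+\alpha)n-2\big)=x_n^{v,i}/\sum_{v',i'}x_n^{v',i'}$; moreover $x_k^{v,i}=\alpha\ind_{\{i=1\}}+\ind_{\{i\ne 1\}}$. Hence $(x_n^{v,i})_{n\ge k}$ is exactly a P\'olya urn with $2k-2$ colours, diagonal replacement matrix $(1+\alpha)I_{2k-2}$, and the stated initial composition, which is the first bullet.

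\emph{Step 3 (conditional independence and the law of each subtree).} I would condition on the whole sequence recording, for each new vertex, which subtree it enters --- equivalently, on the full trajectory of the urn. Given this, the construction decouples: inside $T_n^{v,i}$ the successive attachments are made at corners chosen proportionally to their weight among the corners currently in that subtree, with colours and corners updated by rule (iv) exactly; and since distinct subtrees never share a corner, the choices made in different subtrees --- at a given step and across steps --- are mutually independent given the trajectory. Hence, given the trajectory, the subtrees are independent, and $T_n^{v,i}$ is distributed as the result of running the planted $\alpha$-PA from $\multimap r$ (if $i=1$) or $\multimap b$ (if $i\ne 1$) for $k_n^{v,i}-1$ steps, that is, as $T_{k_n^{v,i}}^{\multimap r}$ or $T_{k_n^{v,i}}^{\multimap b}$. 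Since this conditional law depends on the trajectory only through the terminal sizes $(k_n^{v,i})$, which are a deterministic invertible function of $(x_n^{v,i})$, the same product description survives after conditioning only on $(x_n^{v,i})_{v\in V_S,\,1\le i\le\deg_S(v)}$, which is the second bullet.

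\emph{Expected main obstacle.} The delicate part is the bookkeeping in Step 1: matching corners and their colours between $T_n$ and its planted subtrees, absorbing correctly the extra corner produced by each planting half-edge, and verifying that $T_n^{v,1}$ --- and no other subtree rooted at $v$ --- is red planted, so that the subtree weights come out to be exactly $x_n^{v,1},\dots$ with sum $(1+\alpha)n-2$. Once this is in place, both bullets follow quite directly from the corner-selection rule defining the planar $\alpha$-PA.
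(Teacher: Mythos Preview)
Your proposal is correct and follows essentially the same route as the paper's own proof: count red and blue corners in each planted subtree to identify the weights $x_n^{v,i}$, verify the P\'olya-urn transition by induction, and then condition on the sizes to obtain the conditional independence and the laws $T^{\multimap r}$, $T^{\multimap b}$. Your Step~3 is slightly more explicit than the paper in that you first condition on the full urn trajectory and then reduce to the terminal sizes, but this is a cosmetic refinement of the same argument.
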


Proposition~\ref{fundamentaldecompositionplantedplanartrees} may be restated as follows. 
Given a plane seed $S$ with $|S| = k$, we would like to construct $T_n^S$ for some $n \geq k$. 
This may be done in the following steps. 
\begin{enumerate}
	\item \label{step1conversetheoremdecompositionplanted} 
	Generate a vector $(x_{n}^{v,i})_{v \in V_S, 1 \leq i\leq \deg_S(v)} \in \mathbb{R}^{2k-2}$ 
	with the law of the P\'olya urn of Proposition~\ref{fundamentaldecompositionplantedplanartrees}.
	Define sizes $k_{n}^{v,1}:= \frac{x_{\ell,p}^{(b)}+\alpha}{1+\alpha}$ and $k_{n}^{v,i}:= \frac{x_{\ell}^{(r)}+1}{1+\alpha}$ 
	for $v \in V_S$ and $i \geq 2$. 
	\item Randomly draw independent realisations $\tau_n^{v,1}$ of $T_{k_{n}^{v,1}}^{\multimap r}$ and 
	$\tau_n^{v,i}$ of $T_{k_{n}^{v,i}}^{\multimap b}$ for each $v \in V_S$ and~$i \geq 2$.
	\item \label{step4conversetheoremdecompositionplanted} 
	Graft each tree $\tau_n^{v,i}$ in the corner $c_{v,i}$ of $S$.
\end{enumerate}
The resulting tree has the law of a planar $\alpha$-PA started from $S$. \\

This decomposition allows to couple the evolution of two planar $\alpha$-PA trees emerging from distincts seeds of same size.
Indeed, in spite of the notation, the first two steps do not depend on $S$, only on $k = |S|$: 
in the first step, the vector $(x_{n}^{v,i})$ 
contains $2k-2$ entries and the P\'olya urn generating it starts with $k$ entries $\alpha$ and $k-2$ entries $1$; 
in the second step, $k$ trees are of type $T^{\multimap r}$ and $k-2$ of type $T^{\multimap b}$.
If $S$, $S'$ are two seeds of same size $k$, then $T_n^{S}$ and $T_n^{S'}$ are coupled as follows. 
Simulate a common vector for the sizes of the planted plane subtrees of $T_n^{S}$ and $T_n^{S'}$, 
then simulate a common set of planted plane trees. 
Finally graft these trees to $S$ to obtain $T_n^{S}$ and the same trees to $S'$ to obtain $T_n^{S'}$.

\begin{figure}
    \centering 
    \includegraphics[scale=0.5]{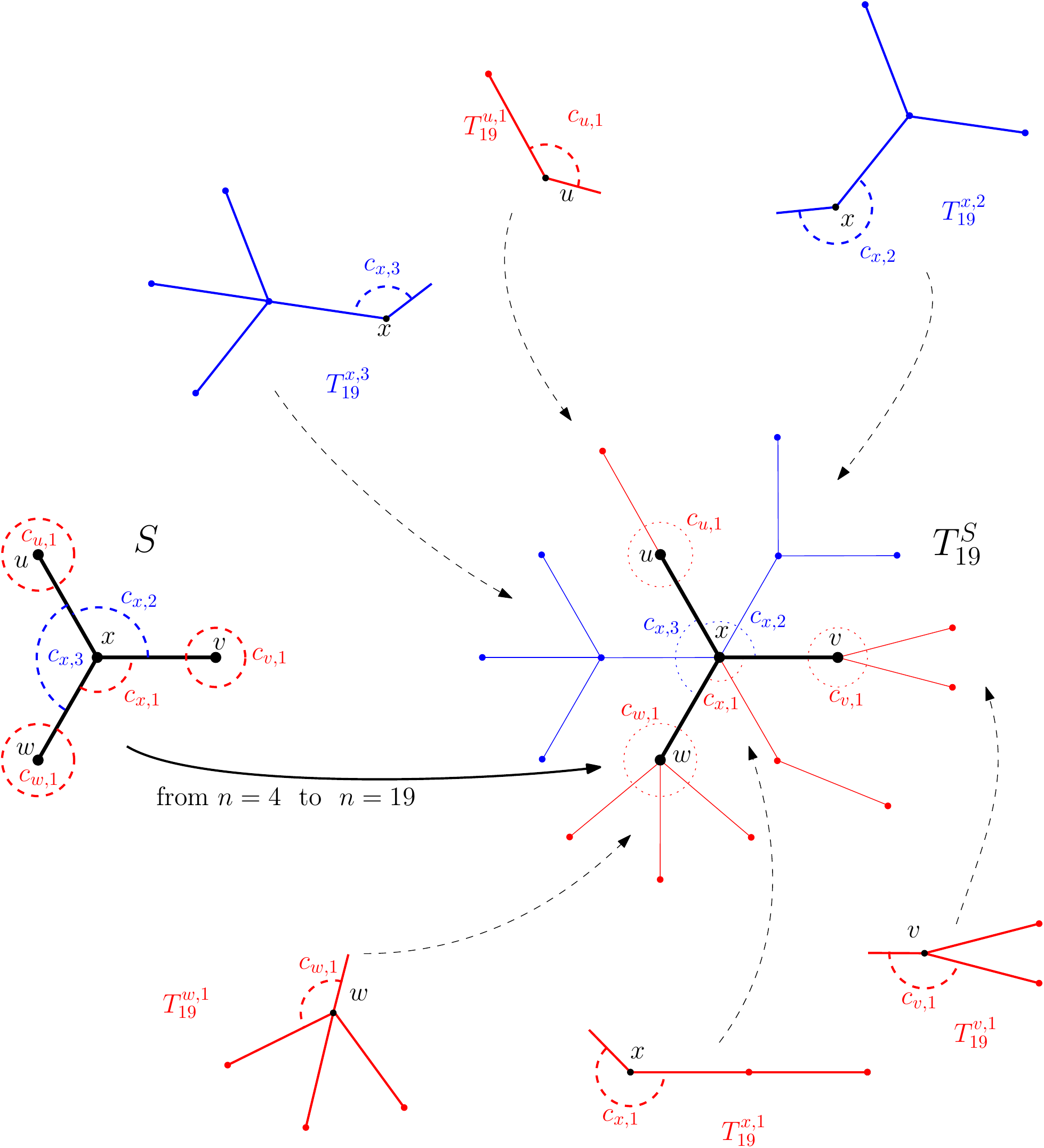}
    \caption{An example of decomposition into red and blue planted plane tree.}
    \label{decompositionplantedplanartreefig}
\end{figure}

\begin{proof}[Proof of Proposition~\ref{fundamentaldecompositionplantedplanartrees}]
	Fix $S$. Let us fist prove that the vector $(x_n^{v,i})_{v \in V_S, 1 \leq i \leq \deg_S(v)}$ has the distribution of the P\'olya urn described in the statement. We do this by induction on $n$.
	When $n =k$, all planted plane subtrees of $T_k^S = S$ have size $1$. 
	The formula relating the sizes of the subtrees to $x_n^{v,i}$ then yields $x_n^{v,1} = \alpha$ and $x_{n}^{v,i} = 1$ for $i \geq 2$. 
	
	Suppose now that the statement is proved up to step $n$ and let $T_n^S$ be a realisation of the planar $\alpha$-PA tree of size $n$. 
	Then, each red planted plane subtree $T_{n}^{v,1}$ has $k_n^{v,1}$ red corners and $k_n^{v,1}-1$ blue ones. 
	Each blue planted plane subtree $T_n^{v,i}$ with $i \geq 2$ has $k_n^{v,i}-1$ red corners and $k_n^{v,i}$ blue corners.
	It follows that, for each $v$
	\begin{align*}
		\bbP(v_{n+1} \text{ descendent of } c_{v,1}) & = \frac{(\alpha +1)k_n^{v,1}-1}{2n-2} = \frac{x_{n}^{v,1}}{2n-2} \qquad \text{and}\\
		\bbP(v_{n+1} \text{ descendent of } c_{v,i}) & = \frac{(\alpha +1)k_n^{v,1}-\alpha}{2n-2} =\frac{x_{n}^{v,i}}{2n-2}
		 \qquad \text{for all $i \geq 2$.}	
	\end{align*}
	Finally, if $v_{n+1}$ is attached to some descendant of a corner $c_{v,i}$, then $x_{n+1}^{v,i} = 1 + \alpha + x_{n}^{v,i}$. 
	This shows that the vector $(x_{n+1}^{v,i})$ also has the claimed distribution. 
	
	Let us now condition on $(x_{n}^{v,i})_{v,i}$, 
	or equivalently on $(k_{n}^{v,i})_{v,i}$ for some $n$. 
	Then, for each $v \in V_s$ and $i \leq \deg_S(v)$, 
	the $k_{n}^{v,i}$ vertices of $T_n^{v,i}$, independently of the times they join $T_k,\dots, T_n$, 
	attach themselves to the planted plane subtree of the corner $c_{v,i}$ with a (conditional) distribution
	that assigns to any red corner a weight proportional to $\alpha$ and to any blue corner a weight proportional to~$1$. 
	If follows readily that $T_{n}^{v,i}$ has the law of $T_{k_n^{v,i}}^{\multimap r}$ or $T_{k_n^{v,i}}^{\multimap b}$, 
	depending on whether $c_{v,i}$ is red or blue, respectively. 
	It is also immediate, that the resulting trees in different corners are independent%
	\footnote{This independence only holds {\em conditionally on the size vector} $(k_{n}^{v,i})_{v,i}$. Moreover, we do not claim that the evolution between step $k$ and $n$ is independent in different corners, only the resulting subtrees are.}.
\end{proof}


\section{First and second moment of a class of observables}\label{sec:moments}

The ultimate goal of this section is to obtain precise estimates on the second moment of $\mathcal{F}_{\btau}(T_n)$ 
for particular decorated trees $\btau$.
In doing so we will also prove a general result on the first moment of $\mathcal{F}_{\btau}(T_n)$ for any $\btau$. 
The latter is longer to state, and is deferred to later in the section. 
Below is the minimal result required in the proof of Theorem~\ref{thm:main}. 

\begin{theorem}\label{thm:second_moment}
    Let $\btau$ be a finite decorated tree with $\ell(u) \geq 2$ for any $u \in \tau$
    and such that $|\ell| = \sum_{u \in \tau} \ell(u) > 1 + \alpha$. 
    For any seed $S$, we have the following asymptotic as $n \to \infty$: 
    \begin{align*}
	    \mathbb{E}[\mathcal{F}_{\btau}(T_{n}^S)^2] \approx n^{\frac{2 |\ell|}{1+\alpha}}.
    \end{align*}
\end{theorem}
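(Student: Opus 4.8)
The plan is to establish upper and lower bounds separately, with the upper bound being the substantive part. Expanding the square,
\[
\mathbb{E}[\mathcal{F}_{\btau}(T_n^S)^2] = \sum_{\phi_1,\phi_2:\,\tau \hookrightarrow T_n}\mathbb{E}\Big[\prod_{u\in\tau}[\deg_{T_n}(\phi_1(u))-1]_{\ell(u)}\,[\deg_{T_n}(\phi_2(u))-1]_{\ell(u)}\Big],
\]
so the second moment is itself (essentially) a first-moment quantity of the type governed by~\eqref{eq:first_moment1}, but for a \emph{reducible} decorated structure: the disjoint union $\btau\sqcup\btau$ whose two copies are allowed to overlap inside $T_n$ in all possible ways. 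The idea is to partition pairs $(\phi_1,\phi_2)$ according to the combinatorial pattern of their overlap --- i.e.\ according to the (possibly disconnected) decorated graph $\boldsymbol\rho$ obtained by gluing $\tau\sqcup\tau$ along $\phi_1(\tau)\cap\phi_2(\tau)$, where a vertex lying in the image of both copies receives decoration value $\ell(u_1)+\ell(u_2)$ coming from the two arrow-sets (using $[m]_a[m]_b \le [m]_{a+b}$ pointwise, or more precisely expanding $[m]_a[m]_b$ as a nonnegative combination of $[m]_c$ with $c\le a+b$, which is the clean way to stay inside the class of observables already analysed). For each fixed overlap pattern, the contribution is $\mathbb{E}[\mathcal{F}_{\boldsymbol\rho}(T_n^S)]$ for a decorated forest $\boldsymbol\rho$, and one invokes the general first-moment theorem of this section to get $\mathbb{E}[\mathcal{F}_{\boldsymbol\rho}(T_n^S)] \approx \log^{\gamma(\boldsymbol\rho)}(n)\, n^{\lambda(\boldsymbol\rho)/(1+\alpha)}$. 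Since there are finitely many overlap patterns, it then suffices to check that every such $\boldsymbol\rho$ satisfies $\lambda(\boldsymbol\rho) \le 2|\ell|$, with $\lambda(\boldsymbol\rho) = 2|\ell|$ and $\gamma(\boldsymbol\rho)=0$ attained only by the ``no overlap'' pattern (two disjoint copies far from each other and from the seed).

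Concretely I would proceed as follows. \textbf{Step 1:} Set up the overlap decomposition and the pointwise identity reducing $\mathbb{E}[\mathcal{F}_{\btau}(T_n^S)^2]$ to a finite sum of first moments $\mathbb{E}[\mathcal{F}_{\boldsymbol\rho}(T_n^S)]$ over decorated forests $\boldsymbol\rho$ obtained by gluing two copies of $\tau$ and redistributing decorations. \textbf{Step 2:} Quote the section's general first-moment asymptotic to write each term as $\log^{\gamma(\boldsymbol\rho)}(n)\,n^{\lambda(\boldsymbol\rho)/(1+\alpha)}$, and recall the explicit formula for $\lambda$ and $\gamma$ in terms of $\boldsymbol\rho$ --- this is where the hypotheses $\ell(u)\ge 2$ for all $u$ and $|\ell|>1+\alpha$ will be used, exactly as they control $\gamma$ and force $\lambda$ to equal (rather than exceed by a log or polynomial correction) the ``naive'' exponent for the relevant $\boldsymbol\rho$. \textbf{Step 3 (upper bound):} Show $\lambda(\boldsymbol\rho)\le 2|\ell|$ for every overlap pattern, and that the dominant term, of exact order $n^{2|\ell|/(1+\alpha)}$ with no logarithm, is the disjoint pattern. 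This gives $\mathbb{E}[\mathcal{F}_{\btau}(T_n^S)^2] = \mathcal{O}(n^{2|\ell|/(1+\alpha)})$. \textbf{Step 4 (lower bound):} For the matching lower bound, note $\mathbb{E}[\mathcal{F}_{\btau}(T_n^S)^2]\ge \mathbb{E}[\mathcal{F}_{\btau}(T_n^S)]^2$ by Jensen (or Cauchy--Schwarz), and by~\eqref{eq:first_moment1} together with the hypotheses the right-hand side is $\approx n^{2|\ell|/(1+\alpha)}$ (here one checks $\gamma(\btau)=0$ and $\lambda(\btau)=|\ell|$ under $\ell\ge 2$, $|\ell|>1+\alpha$, again from the explicit formulas of this section). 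Combining Steps 3 and 4 yields the claimed $\approx$.

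\textbf{Main obstacle.} The real work is Step 3: verifying $\lambda(\boldsymbol\rho)\le 2|\ell|$ uniformly over all gluings, and pinning down that the only way to reach the exponent $2|\ell|$ without a logarithmic loss is the fully disjoint configuration. This requires understanding precisely how $\lambda$ and $\gamma$ behave when copies of $\tau$ are merged: merging vertices combines their decorations (good for the exponent, since deep old vertices contribute $\ell$-linearly) but removes embedding freedom and can create decorated vertices whose local structure triggers a logarithmic or polynomial enhancement via the ``recently acquired vertices'' mechanism described around~\eqref{eq:first_moment1} --- and one must show this enhancement never compensates the loss from reduced multiplicity when $|\ell|>1+\alpha$. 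A secondary subtlety is that for disconnected $\boldsymbol\rho$ the first-moment asymptotic must be applied componentwise and the exponents added, which needs the first-moment theorem of this section to be stated (or used) for forests, not just trees; if it is only stated for trees, a short additional argument handling independence/near-independence of far-apart components via the coupling of Section~\ref{sec:planar_PA} is needed.
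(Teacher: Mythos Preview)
Your overall architecture---split pairs $(\phi_1,\phi_2)$ by overlap pattern, reduce each pattern to a first-moment quantity, and get the lower bound from Jensen---matches the paper. For the \emph{overlapping} patterns your Step~1 is exactly what the paper does: it writes $\mathcal{F}_{\btau+\btau}(T)=\sum_{\bsigma}C(\btau,\bsigma)\mathcal{F}_{\bsigma}(T)$ over merger trees $\bsigma$ with $w(\bsigma)\le 2|\ell|$ (the identity $[m]_a[m]_b=\sum_j c_j[m]_{a+b-j}$ you allude to is precisely the mechanism), and then invokes Theorem~\ref{thm:first_moment} together with Remark~\ref{rem:no_log} to obtain $\mathcal{O}(n^{2|\ell|/(1+\alpha)})$. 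Your Step~4 is also fine; the paper leaves this implicit.

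The place where your plan is too optimistic is the \emph{non-overlapping} pattern, which you flag only as a ``secondary subtlety.'' In the paper this is in fact the harder half. Theorem~\ref{thm:first_moment} is proved only for \emph{connected} decorated trees---the recurrence of Proposition~\ref{prop:recurrence} and the weight $w(\btau)$ are defined only in that setting---so you cannot simply apply it componentwise to the forest $\btau\sqcup\btau$ and add the exponents. Nor does the coupling of Section~\ref{sec:planar_PA} give independence of two disjoint embeddings inside a single $T_n$: the planted subtrees are only \emph{conditionally} independent given the size vector, and the sizes themselves are P\'olya-urn correlated, so $\mathbb{E}[\mathcal{F}_{\btau,\btau}(T_n)]$ does not factor. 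The paper therefore introduces a separate observable $\mathcal{F}_{\btau,\bsigma}(T)$ for non-overlapping pairs, proves for it a new recurrence \emph{inequality} (Proposition~\ref{propinequalityformularecpropsecmomembeddingsnooverlap}), which requires a modified combinatorial lemma because removing a loose leaf from one copy can free a vertex currently blocked by the other copy, and then runs an induction on pairs $(\btau,\bsigma)$ mirroring the proof of Theorem~\ref{thm:first_moment} to obtain $\mathbb{E}[\mathcal{F}_{\btau,\bsigma}(T_n)]=\mathcal{O}(\mathbb{E}[\mathcal{F}_{\btau}(T_n)]\cdot\mathbb{E}[\mathcal{F}_{\bsigma}(T_n)])$. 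This is not a short add-on; it is essentially a second pass through the whole first-moment machinery. Your plan would go through if you replace the hand-wave about ``near-independence'' with this two-variable recurrence argument.
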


To prove the above, we will proceed in several steps, each occupying a subsection below. 
First we prove a recurrence relation on the first moment of our observables (see Section~\ref{sec:reucrrence}),
which is then used in Section~\ref{sec:first_moment} to obtain an accurate estimate of 
$\mathbb{E}[\mathcal{F}_{\btau}(T_{n}^S)]$ as $n$ goes to infinity (see Theorem~\ref{thm:first_moment}). 
Finally, in Section~\ref{secondmomentestimatetheoremsection}, we use the previous results to prove Theorem~\ref{thm:second_moment}.

\subsection{A recurrence formula}\label{sec:reucrrence}

Recall from the introduction that the asymptotic of $\bbE[\calF_{\btau}(T_n^S)]$ as $n\to \infty$ is polynomial with an exponent depending on $\btau$. 
This exponent will be determined by the weight of $\btau$ defined below.

\begin{definition}[Weight of decorated trees]\label{effectiveweight}
    Let $\btau$ be a decorated tree.
    If $\tau$ has size $1$ or $2$ and all decorations of its vertices are $0$, then the weight of $\btau$ is $1$. 
    In all other cases, the weight of $\btau$ is 
    \begin{align*}
    	w(\btau) =\sum_{v\in V_\tau} \ell(v) + \ind_{\{\deg_\tau(v) = 1 \text{ and } \ell(v) = 0\}}.
    \end{align*}
    Call a vertex $v \in \tau$ with $\ell(v) = 0$ and $\deg_\tau(v) = 1$ a {\em loose leaf} of $\btau$. 
    Then the weight of $\btau$ is $|\ell|$ plus the number of loose leaves.
\end{definition}

Definition~\ref{effectiveweight} induces a partial order on the set of decorated trees.

\begin{definition}[Partial order on decorated trees]\label{partialorderdef}
    For any two decorated trees $\btau = (\tau,\ell)$, $\btau' = (\tau',\ell')$
    we write $\boldsymbol{\tau'} \prec \btau$ if and only if:
    \begin{tight_itemize}
        \item $w(\boldsymbol{\tau'}) < w(\btau)$ and $\vert {\tau'} \vert \leq \vert {\tau} \vert$;
        \item or if $w(\boldsymbol{\tau'}) = w(\btau)$ and $\vert {\tau'} \vert < \vert {\tau} \vert$; 
        \item or if $w(\boldsymbol{\tau'}) = w(\btau)$, $\vert {\tau'} \vert = \vert {\tau} \vert$ and 
        $\sum_{v\in V_{\tau'}} \ell'(v) = |\ell'| < |\ell| = \sum_{v\in V_\tau} \ell(v)$.
    \end{tight_itemize}
\end{definition}

Clearly, $\prec$ is a strict partial order on the set of decorated trees. We denote by $\preceq$ the associated partial order. 

There are three trees $\btau$ of weight $1$: those with a single vertex and decoration $0$ or $1$
(they are denoted by $\small{\textcircled{\tiny{0}}}$ and $\small{\textcircled{\tiny{1}}}$, respectively)
and 
that with two vertices and decorations $0$ for both of them (denoted by $\small{\textcircled{\tiny{0}}}-\small{\textcircled{\tiny{0}}}$).

\newcommand{\ct}{\mathsf{c}}

\begin{proposition}\label{prop:recurrence}
    There exists a family of nonnegative real numbers 
    $\{\ct(\bsigma,\btau) \: \ \bsigma \prec\btau \}$ 
    such that, for every seed $S$, every decorated tree $\btau$ with $w(\btau) >1$ and every $n \geq |S|$:
    \begin{align}\label{eq:recurrence}
        \bbE[\calF_{\btau}(T_{n+1}^{S}) \vert \calE_n] 
        = \Bigg(1+\frac{w(\btau)}{(1+\alpha) n-2}\Bigg)\cdot \calF_{\btau}(T_{n}^{S}) 
        + \frac{1}{(1+\alpha) n-2} \sum_{\bsigma \prec \btau}  \ct(\bsigma,\btau) \cdot \calF_{\bsigma}(T_{n}^{S}) ,
    \end{align}
    where $\calE_n$ is the $\sigma$-algebra generated by $T_{k}^S,\dotsc,T_n^S$. In addition, when: 
    \begin{tight_itemize}
        \item $\btau = \small{\textcircled{\tiny{0}}}$, $\calF_{\btau}(T_{n}) = n$;
        \item $\btau = \small{\textcircled{\tiny{1}}}$, $\calF_{\btau}(T_{n}) = n-2$;
        \item $\btau = \small{\textcircled{\tiny{0}}}-\small{\textcircled{\tiny{0}}}$, $\calF_{\btau}(T_{n}) = 2n-2$.
	\end{tight_itemize}
\end{proposition}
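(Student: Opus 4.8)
The plan is to compute $\bbE[\calF_{\btau}(T_{n+1}^S)\mid \calE_n]$ directly by analysing how a single attachment step changes the observable $\calF_{\btau}$. Conditionally on $T_n^S$, the tree $T_{n+1}^S$ is obtained by attaching a new leaf $v_{n+1}$ to a vertex $u_n$ chosen with probability $(\deg_{T_n}(u_n)-1+\alpha)/((1+\alpha)n-2)$. So I would write
\begin{align*}
\bbE[\calF_{\btau}(T_{n+1}^S)\mid\calE_n]
= \sum_{u\in T_n} \frac{\deg_{T_n}(u)-1+\alpha}{(1+\alpha)n-2}\,\calF_{\btau}(T_n^S \text{ with a leaf added at } u).
\end{align*}
The quantity $\calF_{\btau}(T_{n+1}^S)$ is a sum over embeddings $\phi:\tau\hookrightarrow T_{n+1}^S$ of $\prod_{u\in\tau}[\deg(\phi(u))-1]_{\ell(u)}$. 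I would split the embeddings into those whose image avoids $v_{n+1}$ and those whose image contains $v_{n+1}$. The first group reproduces $\calF_{\btau}(T_n^S)$ except that one vertex — the image of $u_n$, if it is used — has its degree incremented by $1$; the second group forces $v_{n+1}$ (a leaf, degree $1$) to be the image of a leaf of $\tau$ with decoration $0$ (a loose leaf), since $[\deg(v_{n+1})-1]_{\ell}=[0]_\ell$ vanishes unless $\ell=0$, and $v_{n+1}$ being a leaf can only be the image of a leaf of $\tau$ (or of an isolated vertex when $|\tau|=1$, but $w(\btau)>1$ rules that out, as it rules out $|\tau|=2$ with both decorations $0$).

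Next I would handle the degree increment. Using the elementary identity $[m+1]_d = [m]_d + d\,[m]_{d-1}$ applied to $m=\deg_{T_n}(\phi(u_n))-1$, each embedding using $\phi(u_n)$ contributes its "unincremented" value plus a correction term in which the decoration $\ell(\phi^{-1}(u_n))$ is lowered by one and a multiplicative factor $\ell(\phi^{-1}(u_n))$ appears. Summing the main (unincremented) parts over $u_n$ weighted by $(\deg_{T_n}(u)-1+\alpha)/((1+\alpha)n-2)$ and recombining with the embeddings that avoid $v_{n+1}$, the "$1$" in the numerator combines with the embeddings-avoiding-$v_{n+1}$ count to give exactly $\calF_{\btau}(T_n^S)$; the remaining terms are: (a) a factor $\sum_u(\deg_{T_n}(u)-1+\alpha)=\,(1+\alpha)n-2$ never appears alone — rather, after the split one gets, for each vertex $v\in\tau$, a term where a blue corner at $\phi(v)$ is "used up" producing $[\deg-1]_{\ell(v)+1}\cdot(\ldots)$ which is $\calF$ of a modified tree; (b) the loose-leaf terms from $v_{n+1}$. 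The bookkeeping is designed precisely so that the coefficient of $\calF_{\btau}(T_n^S)/((1+\alpha)n-2)$ that survives equals $w(\btau)=|\ell|+\#\{\text{loose leaves}\}$: the $|\ell|$ comes from the degree-lowering corrections $\sum_v \ell(v)$ at the attachment vertex, and the $\#\{\text{loose leaves}\}$ comes from the new embeddings through $v_{n+1}$, each of which gives back a copy of $\calF_{\btau}(T_n^S)$ (the image of $\tau\setminus\{\text{that loose leaf}\}$ embeds in $T_n^S$, and the loose leaf's neighbour in $\tau$ must map to $u_n$, which carries the degree weight). All genuinely remaining terms involve observables $\calF_{\bsigma}$ with $\bsigma\prec\btau$: replacing a decoration $\ell(v)$ by $\ell(v)-1$ strictly decreases $|\ell|$ hence $w$ (unless it creates a loose leaf, but then $|\tau|$ is unchanged and $|\ell|$ strictly drops, so still $\bsigma\prec\btau$ via the third clause) or by the first clause; removing a loose leaf decreases both $|\tau|$ and $w$; and "using up" a blue corner at a degree-$1$, decoration-$0$ vertex to make it decoration-$1$ leaves $|\tau|$ fixed but replaces a loose leaf contribution with an $\ell$-contribution, which does not change $w$ — here I must check carefully that such a $\bsigma$ still satisfies $\bsigma\prec\btau$, which it does because $|\ell'|>|\ell|$ would be the wrong direction, so I have to make sure this case actually reduces $w$ or $|\tau|$; I expect it reduces the tree in another way or is absorbed, and pinning this down is the crux. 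The coefficients $\ct(\bsigma,\btau)$ are then read off as the (manifestly nonnegative, integer) multiplicities arising in this expansion, and are independent of $n$ and $S$ because they only count combinatorial operations on $\btau$.

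Finally, the three base cases are direct computations: $\calF_{\small{\textcircled{\tiny 0}}}(T_n)=\sum_{\phi:\,\bullet\hookrightarrow T_n}1 = |V_{T_n}| = n$; $\calF_{\small{\textcircled{\tiny 1}}}(T_n)=\sum_{u\in T_n}(\deg_{T_n}(u)-1) = 2(n-1)-n = n-2$ since $T_n$ is a tree with $n$ vertices and $n-1$ edges; and $\calF_{\small{\textcircled{\tiny 0}}-\small{\textcircled{\tiny 0}}}(T_n) = \#\{\text{ordered pairs of adjacent vertices}\} = 2|E_{T_n}| = 2(n-1)=2n-2$. I would present these first (or last) as a warm-up.

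The main obstacle I anticipate is the careful combinatorial accounting in the middle paragraph: correctly classifying every embedding of $\tau$ into $T_{n+1}^S$ according to whether it uses $v_{n+1}$ and whether it uses $u_n$, applying the falling-factorial identity to extract exactly the right multiple of $\calF_{\btau}(T_n^S)$, and then verifying that \emph{every} leftover term is indexed by some $\bsigma\prec\btau$ in the precise sense of Definition~\ref{partialorderdef} — in particular dealing with the borderline operations that preserve $w(\btau)$ and checking they strictly decrease $|\tau|$ or $|\ell|$. The nonnegativity of $\ct(\bsigma,\btau)$ will be automatic from the combinatorial interpretation, and $n$-independence follows because the only $n$-dependence in \eqref{eq:recurrence} is the explicit prefactor $1/((1+\alpha)n-2)$ coming from the normalisation in \eqref{probabilitylawvertexaffinePAmodel}.
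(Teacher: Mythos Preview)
Your approach is essentially the paper's: split embeddings of $\tau$ in $T_{n+1}$ according to whether they use $v_{n+1}$ and/or $u_n$, apply $[m+1]_d=[m]_d+d[m]_{d-1}$, and observe that only loose leaves of $\btau$ can map to $v_{n+1}$. The base cases are handled identically.

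There is, however, a genuine gap at exactly the point you flag as ``the crux''. When a loose leaf $\tv$ maps to $v_{n+1}$, its neighbour $\tu$ maps to $u_n$; after multiplying by the attachment probability $(\deg_{T_n}(u_n)-1+\alpha)/((1+\alpha)n-2)$ and expanding, the leading term you obtain is not $\calF_{\btau}(T_n)$ but $\calF_{(\btau\setminus\tv)^+}(T_n)$, where $(\btau\setminus\tv)^+$ is $\btau$ with the loose leaf removed and the decoration of $\tu$ increased by one. This tree has the \emph{same} weight as $\btau$ (and smaller size, so it is indeed $\prec\btau$), but leaving it as is would give the coefficient $|\ell|$, not $w(\btau)$, in front of $\calF_{\btau}(T_n)$. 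The paper closes this gap with a separate combinatorial identity (its Lemma~\ref{combinatoriallemma}):
\[
\calF_{(\btau\setminus\tv)^+}(T)=\calF_{\btau}(T)+\big(\deg_\tau(\tu)-\ell(\tu)-2\big)\,\calF_{\btau\setminus\tv}(T),
\]
proved by counting extensions of embeddings of $\tau\setminus\tv$ to embeddings of $\tau$. This is what converts each loose leaf into a $+1$ contribution to the coefficient of $\calF_{\btau}$; your paragraph asserts this outcome but does not supply the mechanism.

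Two smaller corrections. First, the constants $\ct(\bsigma,\btau)$ are \emph{not} integers: they involve $\alpha$, e.g.\ $\ell(\tu)(\ell(\tu)+\alpha-1)$ and, after the identity above, $\deg_\tau(\tu)+\ell(\tu)+\alpha-2$. Second, nonnegativity is not automatic: the coefficient $\deg_\tau(\tu)+\ell(\tu)+\alpha-2$ in front of $\calF_{\btau\setminus\tv}$ could a priori be negative, and one must argue (as the paper does) that since $w(\btau)>1$ the vertex $\tu$ neighbouring a loose leaf cannot itself be a loose leaf, so $\deg_\tau(\tu)+\ell(\tu)\geq 2$. Your confused passage about ``$|\ell'|>|\ell|$ being the wrong direction'' is a symptom of not having the identity above in hand; once you do, all surviving $\bsigma$ have either strictly smaller weight or equal weight with strictly smaller size or $|\ell|$, and the partial order does the rest.
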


It is worth mentioning that the constants $\{c(\bsigma,\btau) \: \ \bsigma \prec\btau \}$ do not depend on $n$ or $S$, 
but do depend on the parameter $\alpha$ of the model. 

The proof below is somewhat algebraic and does not use the planar $\alpha$-PA. 
A more visual proof that uses the notion of decorated embedding (as described at the end of Section~\ref{sec:corners}) may be given. 
It is similar to that of \cite[Lem. 6]{curien2015scaling} with some additional difficulties due to the colouring of corners. 

\begin{proof}
	Fix the seed tree $S$ and drop it from the notation.
	First let us prove the three particular cases. 
	\begin{tight_itemize}
        \item For $\btau = \small{\textcircled{\tiny{0}}}$, then $\calF_\tau(T_n)$ is simply the number of vertices of $T_n$, hence is equal to $n$.
        \item For $\btau = \small{\textcircled{\tiny{1}}}$, 
        then $\calF_{\btau}(T) = \sum_{u \in T} \deg_T(u) -1  = 2 |E_T| - |V_T| = |V_T| -2$. 
       	Hence $\calF_{\btau}(T_n) = n-2$. 
        \item For $\btau = \small{\textcircled{\tiny{0}}}-\small{\textcircled{\tiny{0}}}$, 
        then $\calF_{\btau}(T) =2|E_T|$ since the $\btau$ may be embedded over any edge of $T$ in one of two directions. 
        Hence $\calF_{\btau}(T_n) = 2n-2$. 	
    \end{tight_itemize}
	
	Let us now prove the recurrence formula.
	Fix $\btau = (\tau,\ell)$ a decorated tree with $w(\btau) >1$. 
	For a tree $T$ and an embedding $\phi: \tau \hookrightarrow T$, let 
	$\pi(\btau,\phi, T) = \prod_{u \in \tau} [\deg_{T}(\phi(u))-1]_{\ell(u)}$, so that 
	\begin{align}\label{eq:observable2}
		\mathcal{F}_{\btau}(T)
		= \sum_{\phi: \tau \hookrightarrow T} \,\prod_{u \in \tau} [\deg_{T}(\phi(u))-1]_{\ell(u)} 
		= \sum_{\phi: \tau \hookrightarrow T} \pi(\btau,\phi, T).
	\end{align}
	
	Recall that, in passing from $T_n$ to $T_{n+1}$, a new vertex $v_n$ is attached to a randomly chosen vertex $u_n$ of $T_n$.
	Our purpose is to compute the sum above over embeddings $\phi$ of $\tau$ in $T_{n+1}$. 
	We may restrict the sum only to embeddings with $\pi(\btau,\phi,T_{n+1}) > 0$. 
	We separate such embeddings into three categories:
	\begin{enumerate}
	\item those who do not include $u_n$ or $v_n$ in their image;
	\item those who include $u_n$ but not $v_n$ in their image;
	\item those who include both $u_n$ and $v_n$ in their image.
	\end{enumerate}
	The embedding cannot contain $v_n$ without $u_n$ in its image. 
	Indeed, we have $\deg_{T_{n+1}}(v_n) = 1$, hence, if $\phi$ is an embedding that maps a vertex $\tv \in \tau$ to $v_n$, then for $\pi(\btau,\phi, T_{n+1})$ to be non-zero, it is necessary that $\ell(\tv) = 0$. 
	By choice of $\tau$, if such a vertex exists, it necessarily has a neighbour $\tu$, which then is mapped by $\phi$ onto $u_n$. 
	Moreover, the vertex $\tv$ needs to have a single neighbour in $\tau$, hence needs to be a loose leaf of $\btau$. 
	
	Write $\calF^{(i)}_{\btau}(T_{n+1})$ with $i=1,2,3$ for the contribution to~\eqref{eq:observable2} of embeddings from each of the categories above. 
	For the first two categories, the embeddings considered are in one to one correspondence with embedding of $\tau$ in $T_n$ (although their weights are different whether considered in $T_n$ or $T_{n+1}$). 
	Thus 
	\begin{align*}
		\calF^{(1)}_{\btau}(T_{n+1}) 
		&= \sum_{\phi: \tau \hookrightarrow T_n} \pi(\btau,\phi, T_n) \cdot \prod_{\tu \in \tau}\ind_{\phi(\tu) \neq u_n} \qquad \text{ and }\\
		\calF^{(2)}_{\btau}(T_{n+1}) 
		&= \sum_{\phi: \tau \hookrightarrow T_n}\sum_{\tu \in \tau} 
		\ind_{\phi(\tu) = u_n}\Big( \prod_{\tv \in \tau \setminus \{\tu\}} [\deg_{T_n}\phi(\tv)-1]_{\ell(\tv)} \Big) 
		 \cdot [\deg_{T_n}\phi(\tu)]_{\ell(\tu)}.
	\end{align*}
	Now a basic algebraic manipulation shows that $[k]_\ell = [k-1]_{\ell} + \ell [k-1]_{\ell -1}$ 
	for all $k \geq 1$ and $\ell \geq 0$\footnote{when $\ell = 0$, the second term in the RHS is not defined.
	However, its prefactor cancels it out, and we allow this abuse of notation.}.
	Moreover $\prod_{\tu \in \tau}\ind_{\phi(\tu) \neq u_n} = 1 - \sum_{\tu \in \tau} \ind_{\phi(\tu) = u_n}$. Hence
	\begin{align*}
		&\calF^{(1)}_{\btau}(T_{n+1}) +\calF^{(2)}_{\btau}(T_{n+1})\\
		&= \sum_{\phi: \tau \hookrightarrow T_n}
		\Big[ \pi(\btau,\phi, T_n) 
		+ \sum_{\tu \in \tau} 
		\ind_{\phi(\tu) = u_n}
		\ell(\tu) \Big( \prod_{\tv \in \tau \setminus \{\tu\}} [\deg_{T_n}\phi(\tv)-1]_{\ell(\tv)} \Big) 
		 \cdot [\deg_{T_n}\phi(\tu)-1]_{\ell(\tu)-1}\Big].
	\end{align*}
	Notice that $\sum_{\phi: \tau \hookrightarrow T_n} \pi(\btau,\phi, T_n) = \calF_{\btau}(T_{n})$. 
	Moreover, recall that, for any fixed $\tu \in \tau$ and embedding $\phi: \tau \hookrightarrow T_n$, we have 
	$\bbP(\phi(\tu) = u_n\,|\, \calE_n) = \frac{\deg_{T_n}\phi(\tu)-1+\alpha}{(1+\alpha)n-2}.$
	Thus, when taking the expectation in the above we find
	\begin{align*}
		& \bbE \big[ \calF^{(1)}_{\btau}(T_{n+1}) +\calF^{(2)}_{\btau}(T_{n+1})\, \big|\, \calE_n\big]- 
		\calF_{\btau}(T_{n}) \\
		& = \sum_{\tu \in \tau} \ell(\tu) \cdot 
		 \sum_{\phi: \tau \hookrightarrow T_n}
		 \Big( \prod_{\tv \in \tau \setminus \{\tu\}} [\deg_{T_n}\phi(\tv)-1]_{\ell(\tv)} \Big) 
		 \cdot [\deg_{T_n}\phi(\tu)-1]_{\ell(\tu)-1}
		 \cdot\tfrac{\deg_{T_n}\phi(\tu)-1+\alpha}{(1+\alpha)n-2}.
	\end{align*}
	The sum over $\tu \in \tau$ in the right-hand side above may be limited to vertices $\tu$ with $\ell(\tu) > 0$. 
	
	For $\tu \in \tau$ with $\ell(\tu) >0$, let $\btau^{(\tu-)}$ be the decorated tree $(\tau, \ell^{(\tu-)})$ 
	with decorations identical to those of $\btau$ except at the vertex $\tu$ for which $\ell^{(\tu-)} = \ell(\tu) -1$. 
	Then $\btau^{(\tu-)} \prec \btau$. 
	Write $\deg_{T_n}\phi(\tu)-1+\alpha = [\deg_{T_n}\phi(\tu) -\ell(\tu)] + \ell(\tu) +\alpha -1$ 
	to find that the summand in the right hand side is 
	$$\tfrac{1}{(1+\alpha)n-2} \big[ \pi(\btau, \phi, T_n) + (\ell(\tu) +\alpha -1)\cdot \pi(\btau^{\tu-}, \phi, T_n)\big].$$
	Thus we find 
	\begin{align}
		 \bbE \big[ \calF^{(1)}_{\btau}(T_{n+1}) +\calF^{(2)}_{\btau}(T_{n+1})\, \big|\, \calE_n\big] 
		& = \calF_{\btau}(T_{n}) +
		 \sum_{\tu \in \tau} \tfrac{\ell(\tu)}{(1+\alpha)n-2} \cdot 
		\big[\calF_{\btau}(T_n) + (\ell(\tu) +\alpha -1)\cdot  \calF_{\btau^{\tu-}}(T_n)\big] \nonumber \\
		& =
		\Big[1 + \tfrac{|\ell|}{(1+\alpha)n-2}\Big] \calF_{\btau}(T_{n}) +
		 \sum_{\tu \in \tau} \tfrac{\ell(\tu)(\ell(\tu) +\alpha -1)}{(1+\alpha)n-2}\cdot  \calF_{\btau^{\tu-}}(T_n).
		 \label{eq:F12}
	\end{align}
	
	Finally we turn to $\calF^{(3)}_{\btau}(T_{n+1})$. 
	Let $\phi: \tau \hookrightarrow T_{n+1}$ be an embedding contributing to $\calF^{(3)}_{\btau}(T_{n+1})$, let $\tv$ be the loose leaf mapped to $v_{n+1}$ and $\tu$ its only neighbour in $\tau$. 
	Define the following three modifications of $\btau$: 
	\begin{tight_itemize}
	\item $\btau \setminus \tv$ is the decorated tree obtained from $\btau$ by removing the leaf $\tv$ and conserving the same decorations for all other vertices;
	\item $(\btau \setminus \tv)^-$ is the decorated tree obtained from $\btau$ by removing the leaf $\tv$, 
	decreasing the decoration of $\tu$ by one, and conserving the same decorations for all other vertices\footnote{This is only defined when $\ell(\tu) >0$; it will implicitly only appear in such cases in the upcoming formulas};
	\item $(\btau \setminus \tv)^+$ is the decorated tree obtained from $\btau$ by removing the leaf $\tv$, 
	increasing the decoration of $\tu$ by one, and conserving the same decorations for all other vertices.
	\end{tight_itemize}
	It is immediate to check that all trees above are smaller than $\btau$ for the order $\prec$.
	
	Write $\tau \setminus \tv$ for the tree (stripped of decoration) of all of the above. 
	To $\phi$ associate its restriction $\tilde\phi : \tau \setminus\tv \to T_n$ to $\tau \setminus\tv$. 
	Then, by the same type of computation as above
	\begin{align}\label{eq:piv}
		\pi(\btau, \phi,T_{n+1})
		& = \Big( \prod_{\tw \in \tau \setminus \tv} [\deg_{T_{n+1}}\phi(\tw)-1]_{\ell(\tw)} \Big)\\
		& =  \Big( \prod_{\tv \in \tau \setminus \{\tu, \tv\}} [\deg_{T_n}\phi(\tv)-1]_{\ell(\tv)} \Big) 
		\cdot \Big([\deg_{T_n}\phi(\tu)-1]_{\ell(\tu)} + \ell(\tu) [\deg_{T_n}\phi(\tu)-1]_{\ell(\tu)-1}\Big).\nonumber
	\end{align}
	In the first line, since $\ell(\tv) = 0$, we removed the term coming from $v$ from the product. 
	The quantity above will be weighted by $\bbP\big(\phi(\tu)= u_n\,\big|\, \calE_n \big)= \frac{\deg_{T_n}\phi(\tu)-1+\alpha}{(1+\alpha)n-2}$. 
	In preparation, observe that 
	\begin{align*}
    	&(d-1+\alpha) \cdot [d-1]_{\ell(\tu)} = 
		[d-1]_{\ell(\tu)+1}  + (\ell(\tu)+\alpha)[d-1]_{\ell(\tu)} \qquad \text{ and}\\
    	&(d-1+\alpha) \cdot [d-1]_{\ell(\tu)-1} = 
		[d-1]_{\ell(\tu)}  + (\ell(\tu)-1+\alpha)[d-1]_{\ell(\tu)-1}.
	\end{align*}
	Applying the above with $d =  \deg_{T_n}(\tu)$ to~\eqref{eq:piv}, multiplied by $\bbP\big(\phi(\tu)= u_n\,\big|\, \calE_n \big)$, 
	we find
	\begin{align*}
		&\bbP\big(\phi(\tu)= u_n\,\big|\, \calE_n \big) \cdot \pi(\btau, \phi,T_{n+1}) \\
		& = \tfrac{1}{(1+\alpha)n-2} \cdot \big[
		\pi\big((\btau\setminus \tv)^+, \tilde \phi, T_n\big) + 
		(2\ell(\tu) + \alpha) \cdot \pi(\btau\setminus \tv, \tilde \phi, T_n)
		+\ell(\tu)(\ell(\tu) +\alpha -1) \cdot \pi\big((\btau\setminus \tv)^-, \tilde \phi, T_n\big) \big].
	\end{align*}
	Summing over all embeddings and all values of $\tu,\tv$ we find
	\begin{align}\label{eq:F3}
		&\bbE\Big[ \calF^{(3)}_{\btau}(T_{n+1}) \,\Big|\, \calE_n\Big]\\
		&=
		\tfrac{1}{(1+\alpha)n-2}
		\sum_{\tv \text{ loose leaf}} 
		\calF_{(\btau \setminus \tv)^+}(T_n) +
		(2\ell(\tu) +\alpha) \cdot \calF_{\btau \setminus \tv}(T_n) 
		 + \ell(\tu) (\ell(\tu) +\alpha -1) \cdot \calF_{(\btau \setminus \tv)^-}(T_n), \nonumber	
	\end{align}
	where the sum in the right-hand side is over all loose leaves $\tv$ of $\btau$ and $\tu$ denotes their unique neighbour.
	  
	By summing~\eqref{eq:F12} and~\eqref{eq:F3}, we may obtain a recurrence formula similar to~\eqref{eq:recurrence}, but with one flaw. 
	Indeed, in such a formula the trees $\bsigma$ would potentially be of the type $(\btau \setminus \tv)^+$, hence have same weight as $\btau$. 
	We reduce the contribution of such trees to ones of lower weight via the following lemma.

\begin{lemma}\label{combinatoriallemma}
        Let $\btau=(\tau,\ell)$ be a decorated tree with $w(\btau) \geq 2$, let $\tv$ be a \textit{loose leave} of $\btau$ 
        and $\tu$ its only neighbour in $\tau$. 
        Then, for any tree $T$:
        \begin{align}\label{equalitycombinatoriallemma}
    	    \calF_{(\btau \setminus \tv)^+}(T) 
    	    = \calF_{\btau}(T) + \big(\mathrm{deg}_{\tau}(\tu)-\ell(\tu)-2\big) \cdot \calF_{\btau \setminus \tv}(T).
        \end{align}
    \end{lemma}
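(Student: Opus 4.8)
```latex
\begin{proof}[Proof sketch of Lemma~\ref{combinatoriallemma}]
The plan is to compare the two observables $\calF_{(\btau \setminus \tv)^+}(T)$ and $\calF_{\btau}(T)$ embedding by embedding,
or rather by looking at embeddings of the common underlying tree $\tau \setminus \tv$.
Fix a tree $T$ and an embedding $\psi: \tau\setminus\tv \hookrightarrow T$.
On the one hand, such a $\psi$ contributes to $\calF_{(\btau\setminus\tv)^+}(T)$ the weight
$\pi\big((\btau\setminus\tv)^+,\psi,T\big) = [\deg_T\psi(\tu)-1]_{\ell(\tu)+1}\cdot \prod_{\tw \in \tau\setminus\{\tu,\tv\}}[\deg_T\psi(\tw)-1]_{\ell(\tw)}$.
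On the other hand, every embedding $\phi:\tau\hookrightarrow T$ restricts to such a $\psi$, and the number of ways to extend $\psi$ to $\phi$ (i.e.\ the number of choices for $\phi(\tv)$) is the number of neighbours of $\psi(\tu)$ in $T$ not already used by $\psi$; since $\psi$ uses $\deg_\tau(\tu)-1$ of the neighbours of $\psi(\tu)$ (the images of the other neighbours of $\tu$ in $\tau$), this number is $\deg_T\psi(\tu) - (\deg_\tau(\tu)-1)$. Hence the total contribution of $\psi$ to $\calF_{\btau}(T)$ is
\[
\big(\deg_T\psi(\tu) - \deg_\tau(\tu)+1\big)\cdot [\deg_T\psi(\tu)-1]_{\ell(\tu)}\cdot \prod_{\tw \in \tau\setminus\{\tu,\tv\}}[\deg_T\psi(\tw)-1]_{\ell(\tw)},
\]
using that $\ell(\tv)=0$ so the factor from $\tv$ is trivial.

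So everything reduces to the numerical identity, for $d = \deg_T\psi(\tu)$, $\ell = \ell(\tu)$ and $m = \deg_\tau(\tu)$:
\[
[d-1]_{\ell+1} = (d-m+1)\,[d-1]_\ell + (m-\ell-2)\,[d-1]_\ell,
\]
which is simply $[d-1]_{\ell+1} = (d-\ell-1)\,[d-1]_\ell$, a trivial consequence of the definition $[d-1]_{\ell+1}=[d-1]_\ell\cdot(d-1-\ell)$. Indeed $(d-m+1)+(m-\ell-2) = d-\ell-1$. Summing the resulting identity
\[
\pi\big((\btau\setminus\tv)^+,\psi,T\big) = \big(\deg_T\psi(\tu)-\deg_\tau(\tu)+1\big)\cdot \tfrac{\pi(\btau,\phi,T)\text{-contribution}}{\cdots} + (\deg_\tau(\tu)-\ell(\tu)-2)\,\pi(\btau\setminus\tv,\psi,T)
\]
over all embeddings $\psi$ of $\tau\setminus\tv$ in $T$, the first term sums to $\calF_{\btau}(T)$ (by the extension count above) and the second to $(\deg_\tau(\tu)-\ell(\tu)-2)\,\calF_{\btau\setminus\tv}(T)$, giving~\eqref{equalitycombinatoriallemma}.

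The only genuinely delicate point is the bookkeeping of the extension count: one must check that an embedding $\phi:\tau\hookrightarrow T$ is the same data as an embedding $\psi:\tau\setminus\tv\hookrightarrow T$ together with a choice of image for $\tv$ among the $\deg_T(\psi(\tu))-(\deg_\tau(\tu)-1)$ unused neighbours of $\psi(\tu)$ — in particular that injectivity of $\phi$ is exactly captured by requiring $\phi(\tv)$ to avoid the already-used neighbours, which holds because $\tv$ is a leaf so $\phi(\tv)$ need only avoid the rest of the image and can only be a neighbour of $\psi(\tu)$. Granting this, the rest is the one-line polynomial identity above. I do not expect any real obstacle; the affine parameter $\alpha$ does not even enter here, and the hypothesis $w(\btau)\geq 2$ only serves to guarantee that $\btau\setminus\tv$ and $(\btau\setminus\tv)^+$ are honest decorated trees to which $\calF$ applies (and that the ``size $1$ or $2$, all decorations $0$'' exceptional case of Definition~\ref{effectiveweight} does not interfere).
\end{proof}
```
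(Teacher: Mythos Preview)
Your proof is correct and follows essentially the same approach as the paper's: both arguments reduce to embeddings $\psi$ of $\tau\setminus\tv$ in $T$, use the extension count $\deg_T(\psi(\tu))-\deg_{\tau\setminus\tv}(\tu)=\deg_T(\psi(\tu))-(\deg_\tau(\tu)-1)$ to express $\calF_{\btau}(T)$, and then subtract using the identity $[d-1]_{\ell+1}=(d-1-\ell)[d-1]_\ell$. The paper presents this as subtracting two explicit sums (its equations~\eqref{combinatorialequalityonelemma} and~\eqref{combinatorialequalitytwolemma}), while you phrase the same computation as a per-$\psi$ numerical identity, but the content is identical.
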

    
    \begin{proof}[Proof of Lemma~\ref{combinatoriallemma}]
    We proceed in two steps. Let us first express $\calF_{\btau}(T)$ using $\calF_{\btau \setminus \tv}(T)$. 
    To any $\phi: \tau \hookrightarrow T$, associate its restriction $\tilde\phi : \tau \setminus\tv \to T_n$ to $\tau \setminus\tv$. 
    Conversely, any tree embedding $\tilde\psi : \tau \setminus\tv \hookrightarrow T$ may be extended to some embedding 
    $\psi : \tau \hookrightarrow T$ 
    in as many different ways as there are neighbouring vertices of $\tilde\psi(\tu)$ not reached by $\psi$, 
    that is to say $\mathrm{deg}_{T}(\tilde\psi(\tu))-\mathrm{deg}_{\tau \setminus\tv}(\tu)$ ways. Therefore:
    
    \begin{align}\label{combinatorialequalityonelemma}
        \calF_{\btau}(T) 
        = \sum_{\tilde\psi : \tau \setminus \tv \to T} \big( \mathrm{deg}_{T}(\tilde\psi(\tu))-\mathrm{deg}_{\tau \setminus \tv}(\tu) \big) 
        \cdot \pi(\btau\setminus \tv, \tilde\psi, T).
     \end{align}
    In the above equation, we further use that $\pi(\btau,\psi, T)=\pi(\btau\setminus \tv, \tilde\psi, T)$ given that $\tv$ is a loose leaf. 
    
    Next we express $\calF_{(\btau \setminus \tv)^+}(T)$ in terms of $\calF_{\btau \setminus \tv}(T)$. 
    Recall that $(\btau \setminus \tv)^+$ is obtained from $\btau \setminus \tv$ by increasing the decoration of the vertex $\tu$ by one. Thus:
    
    \begin{align}\label{combinatorialequalitytwolemma}
    	\calF_{(\btau \setminus \tv)^+}(T)  = \sum_{\tilde\psi : \tau \setminus \tv \to T} \big(  \mathrm{deg}_{T}(\tilde\psi(\tu))-1-\ell(\tu) \big) \cdot  \pi(\btau\setminus \tv, \tilde\psi, T).
    \end{align}
    
    Writing $\mathrm{deg}_{T}(\tilde\psi(\tu))-1-\ell(u)=[\mathrm{deg}_{T}(\tilde\psi(\tu))-\mathrm{deg}_{\tau \setminus \tv}(\tu)]+[\mathrm{deg}_{\tau \setminus \tv}(\tu)-1-\ell(\tu)]$ and observing that $\mathrm{deg}_{\tau \setminus \tv}(\tu)=\mathrm{deg}_{\tau}(\tu)-1$, we deduce Lemma~\ref{combinatoriallemma} by subtracting~\eqref{combinatorialequalityonelemma} from~\eqref{combinatorialequalitytwolemma}.
    \end{proof}
	
	By Lemma~\ref{combinatoriallemma}, equation~\eqref{eq:F3} may be re-expressed as
	\begin{align}\label{finaleqproofrecurrenceformula}
		\bbE\big[ \calF^{(3)}_{\btau}(T_{n+1}) \,\big|\, \calE_n\big]\nonumber
		=
		 \tfrac{1}{(1+\alpha)n-2} \sum_{\tv \text{ loose leaf}} &
		\calF_{\btau}(T_n) +
		(\mathrm{deg}_{\tau}(\tu)+\ell(\tu)+\alpha-2) \cdot \calF_{\btau \setminus \tv}(T_n) \\
		 &+ \ell(\tu) (\ell(\tu) +\alpha -1) \cdot \calF_{(\btau \setminus \tv)^-}(T_n).	
	\end{align}
	Since $w(\btau) > 1$, the vertex $\tu$ may never be a loose leaf of $\btau$, 
	hence $\deg_\tau(\tu) \geq 2$ or 
	$\deg_\tau(\tu) =1$ but $\ell(\tu) \geq 1$. 
	In both cases, the multiplicative factor $\mathrm{deg}_{\tau}(\tu)+\ell(\tu)+\alpha-2$ is non-negative. 
	Equations~\eqref{eq:F12} and~\eqref{finaleqproofrecurrenceformula} together yield~\eqref{eq:recurrence}.
\end{proof}

\begin{remark}\label{rem:c>0}
	Following the proof we find that the constants $  \ct(\bsigma,\btau)$ appearing in~\eqref{eq:recurrence} are non-zero 
	only if $\bsigma$ may be obtained from $\btau$ by
	\begin{tight_itemize}
	\item[(i)] decreasing the value of one decoration by $1$ or
	\item[(ii)] removing a loose leaf and conserving all other decorations or
	\item[(iii)] removing a loose leaf and modifying the decoration of its unique neighbour by $-1$.  
	\end{tight_itemize}
	It is direct that all the trees $\bsigma$ are smaller than $\btau$ for $\prec$. 
	Indeed, in most cases we have  $w(\bsigma) < w(\btau)$. 
	However, there are two cases where $w(\bsigma) = w(\btau)$: when the decoration of a leaf is $1$ in $\btau$ and decreases to $0$ in $\bsigma$ (by the procedure (i)) and when a loose leaf is removed from $\btau$ (as in (ii)), with its unique ancestor having decoration $0$, hence becoming a loose leaf of $\bsigma$. 
	In both these cases, $\bsigma \prec \btau$ due to the second condition of Definition~\ref{partialorderdef}.
\end{remark}

\subsection{The first moment of $\calF_{\btau}(T_n)$}\label{sec:first_moment}

We are ready to state the full estimate of the first moment of $\calF_{\btau}(T_n^S)$.

\begin{theorem}\label{thm:first_moment}
    For any $\alpha>0$, any seed tree $S$ of size $k\geq2$ and any decorated tree $\btau$, we have:
    \begin{align*}
	    \mathbb{E}[\calF_{\btau}(T_{n}^{S})] \approx n^{\max\{1,\frac{w(\btau)}{1+\alpha}\}} \cdot (\log n)^{\gamma(\btau)},
    \end{align*}
    where $\gamma(\btau)$ is a nonnegative exponent equal to zero when ${w(\btau)<1+\alpha}$ and otherwise recursively defined by:
    \begin{align*}
    \gamma(\btau) = \sup_{\bsigma \prec \btau, \ c(\bsigma,\btau)>0, \ w(\bsigma)=w(\btau)}  (\gamma(\bsigma)+1) \quad \text{if} \ {w(\btau)>1+\alpha},
    \end{align*}
    or by \emph{(critical case)}:
    \begin{align*}
    \gamma(\btau) = \gamma_{c}(\btau) 
    := \max \bigg\{1 \ , \ \sup_{\bsigma \prec \btau, \ c(\bsigma,\btau)>0, \ w(\bsigma)=w(\btau)}  (\gamma_{c}(\bsigma)+1)\bigg\} 
    \quad \text{if} \ {w(\btau)=1+\alpha}.
    \end{align*}
    with the convention $\sup \emptyset = 0$.
\end{theorem}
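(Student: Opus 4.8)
The plan is to argue by induction with respect to the partial order $\prec$ on decorated trees, using the recurrence formula~\eqref{eq:recurrence} of Proposition~\ref{prop:recurrence} as the engine. For the base cases $\btau \in \{\small{\textcircled{\tiny{0}}}, \small{\textcircled{\tiny{1}}}, \small{\textcircled{\tiny{0}}}-\small{\textcircled{\tiny{0}}}\}$ (the trees of weight $1$), Proposition~\ref{prop:recurrence} gives $\calF_{\btau}(T_n)$ explicitly as $n$, $n-2$, or $2n-2$, so $\bbE[\calF_{\btau}(T_n^S)] \approx n = n^{\max\{1, w(\btau)/(1+\alpha)\}}$ with $\gamma(\btau)=0$, which is consistent since $w(\btau)=1<1+\alpha$. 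For the inductive step, fix $\btau$ with $w(\btau)>1$ and assume the estimate holds for every $\bsigma \prec \btau$. Taking expectations in~\eqref{eq:recurrence} and writing $f_{\btau}(n) := \bbE[\calF_{\btau}(T_n^S)]$, we get the scalar recurrence
\begin{align*}
	f_{\btau}(n+1) = \Big(1 + \tfrac{w(\btau)}{(1+\alpha)n - 2}\Big) f_{\btau}(n) + \tfrac{1}{(1+\alpha)n-2}\sum_{\bsigma \prec \btau} c(\bsigma,\btau) f_{\bsigma}(n),
\end{align*}
which I would solve by the standard integrating-factor / variation-of-constants method: the homogeneous solution is $a_n := \prod_{j=|S|}^{n-1}\big(1 + \tfrac{w(\btau)}{(1+\alpha)j-2}\big) \approx n^{w(\btau)/(1+\alpha)}$ (by comparing $\log a_n$ with $\sum \tfrac{w(\btau)}{(1+\alpha)j}$ via $\log(1+x)=x+O(x^2)$), and then $f_{\btau}(n) = a_n\big(f_{\btau}(|S|)/a_{|S|} + \sum_{m=|S|}^{n-1}\tfrac{g(m)}{a_{m+1}((1+\alpha)m-2)}\big)$ where $g(m) := \sum_{\bsigma} c(\bsigma,\btau)f_{\bsigma}(m)$.

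The heart of the matter is then estimating $\sum_{m}\tfrac{g(m)}{a_{m+1}\,((1+\alpha)m-2)}$. Here the inductive hypothesis gives $f_{\bsigma}(m) \approx m^{\max\{1,w(\bsigma)/(1+\alpha)\}}(\log m)^{\gamma(\bsigma)}$, and since each nonzero term has $w(\bsigma)<w(\btau)$ or ($w(\bsigma)=w(\btau)$ with fewer vertices — see Remark~\ref{rem:c>0}), the dominant contribution to $g(m)$ comes from those $\bsigma$ with $c(\bsigma,\btau)>0$ of maximal weight. I would split into the three regimes of $w(\btau)$ versus $1+\alpha$:

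\begin{itemize}
\item[(a)] \textbf{Subcritical, $w(\btau)<1+\alpha$.} Then $a_n \approx n^{w(\btau)/(1+\alpha)}$ with exponent $<1$, and every $\bsigma$ with $c(\bsigma,\btau)>0$ has $w(\bsigma)\le w(\btau)<1+\alpha$, so $f_{\bsigma}(m)\approx m$ (the $\max$ equals $1$, no logs). The summand is then $\approx m \cdot m^{-w(\btau)/(1+\alpha)} \cdot m^{-1} = m^{-w(\btau)/(1+\alpha)}$, a convergent-type or slowly-growing sum whose partial sums are $\approx n^{1 - w(\btau)/(1+\alpha)}$; multiplying by $a_n$ yields $f_{\btau}(n)\approx n$, and $\gamma(\btau)=0$, as claimed. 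One must check the linear term genuinely survives (lower bound), which follows because the $c(\bsigma,\btau)$ are nonnegative and at least one relevant $\bsigma$ (e.g. obtained by lowering a decoration) has $f_{\bsigma}(m)\approx m$ with a positive constant; alternatively one notes $\calF_{\btau}(T_n)\geq$ (number of copies of the underlying tree $\tau$) which is already $\approx n$.
\item[(b)] \textbf{Supercritical, $w(\btau)>1+\alpha$.} Now $a_n\approx n^{w(\btau)/(1+\alpha)}$ has exponent $>1$. The terms $\bsigma$ with $w(\bsigma)<w(\btau)$ contribute summands that are $O(m^{(w(\btau)-1)/(1+\alpha) - w(\btau)/(1+\alpha) - 1}\cdot \text{polylog}) = O(m^{-1-1/(1+\alpha)}\text{polylog})$ up to at worst $w(\bsigma) = w(\btau)-1$ giving $m^{-1-1/(1+\alpha)}\text{polylog}$, hence summable. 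The terms with $w(\bsigma)=w(\btau)$ and $c(\bsigma,\btau)>0$ give summands $\approx m^{-1}(\log m)^{\gamma(\bsigma)}$, whose partial sum up to $n$ is $\approx (\log n)^{\gamma(\bsigma)+1}/(\gamma(\bsigma)+1)$. Taking the max over such $\bsigma$, multiplying by $a_n$, we get $f_{\btau}(n)\approx n^{w(\btau)/(1+\alpha)}(\log n)^{\gamma(\btau)}$ with $\gamma(\btau) = \sup\{\gamma(\bsigma)+1\}$ over the relevant $\bsigma$ (and $\gamma(\btau)=0$ with $f_{\btau}\approx n^{w(\btau)/(1+\alpha)}$ directly if no such $\bsigma$ exists, matching $\sup\emptyset = 0$).
\item[(c)] \textbf{Critical, $w(\btau)=1+\alpha$.} Here $a_n\approx n$, exactly matching the intrinsic linear growth, so the two sources of growth (the homogeneous part and the forcing) are of the same order and produce an extra logarithm. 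The terms with $w(\bsigma)<w(\btau)=1+\alpha$ have $f_{\bsigma}(m)\approx m$ (no logs), giving summands $\approx m\cdot m^{-1}\cdot m^{-1}=m^{-1}$, whose partial sum is $\approx \log n$ — this is the ``$1$'' in the max defining $\gamma_c$. The terms with $w(\bsigma)=w(\btau)$, $c(\bsigma,\btau)>0$ give summands $\approx m^{-1}(\log m)^{\gamma_c(\bsigma)}$ with partial sum $\approx (\log n)^{\gamma_c(\bsigma)+1}$. Combining, $f_{\btau}(n)\approx n (\log n)^{\gamma_c(\btau)}$ with $\gamma_c(\btau)=\max\{1,\sup(\gamma_c(\bsigma)+1)\}$.
\end{itemize}

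Two points require care. First, the recursion $\gamma(\btau) = \sup_{\bsigma}(\gamma(\bsigma)+1)$ is over a \emph{finite} set of $\bsigma$ all strictly smaller in $\prec$, and since $\prec$ is well-founded on decorated trees of bounded size this terminates; I would also note that if $w(\btau)>1+\alpha$ then any $\bsigma$ with $w(\bsigma)=w(\btau)$ appearing with $c(\bsigma,\btau)>0$ likewise has $w(\bsigma)>1+\alpha$, so the non-critical recursion is internally consistent, and similarly in the critical case $w(\bsigma)=w(\btau)=1+\alpha$. Second, to get the matching lower bound ($\approx$ and not merely $\mathcal O$) in cases (b),(c) one must ensure the dominant $\bsigma$'s contribution is not cancelled — this is automatic because $c(\bsigma,\btau)\geq 0$ (Proposition~\ref{prop:recurrence}) and all $f_{\bsigma}\geq 0$, so no cancellation can occur in $g(m)$; one just needs that the specific maximizing $\bsigma$ actually has $c(\bsigma,\btau)>0$ (which is how $\gamma$ is defined) and $f_{\bsigma}(m)$ bounded below by a positive multiple of $m^{\cdots}(\log m)^{\gamma(\bsigma)}$, supplied by the inductive hypothesis. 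I expect the main obstacle to be the bookkeeping in case (b): verifying that \emph{all} lower-weight $\bsigma$ (not just $w(\bsigma)=w(\btau)-1$) contribute summable, hence negligible, terms, and that the polylogarithmic factors they carry cannot conspire to beat the $(\log n)^{\gamma(\btau)}$ coming from the equal-weight terms — this needs the elementary fact that $\sum_{m\le n} m^{-1-\delta}(\log m)^{K} = O(1)$ for any $\delta>0,K\geq 0$, applied uniformly over the finitely many $\bsigma$.
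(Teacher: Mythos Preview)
Your proposal is correct and follows essentially the same route as the paper's proof: induction on $\prec$ from the three weight-$1$ base cases, solving the scalar recurrence~\eqref{eq:recurrence} via the integrating factor $a_n \approx n^{w(\btau)/(1+\alpha)}$ (the paper uses its reciprocal $\omega_n^{(\btau)}$), and splitting into the sub-/super-/critical regimes with the same summation estimates. Your bookkeeping in case~(b) for the lower-weight $\bsigma$ is slightly rough as written---when $w(\bsigma)\le 1+\alpha$ the exponent of $f_{\bsigma}(m)$ is $1$, not $w(\bsigma)/(1+\alpha)$---but you correctly flag this and the fix (the summand has exponent $<-1$ in all subcases) is exactly what the paper does; no genuine gap.
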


\begin{remark}\label{rem:no_log}
	Let $\btau$ be a decorated tree with $\ell(u) \geq 2$ for all its leaves. In particular, it contains no loose leaves and $w(\btau)=|\ell|=\sum_{u \in \tau} \ell(u)$. 
	Then Remark~\ref{rem:c>0} indicates that $\ct(\bsigma,\btau)>0$ only for decorated trees $\bsigma$ obtained by 
	lowering the decoration of some vertex $u \in \btau$ by $1$. 
	Moreover, no such tree $\bsigma$ has any loose leaf either. Assuming in addition that $w(\btau)=|\ell|>1+\alpha$, we have $w(\bsigma) < w(\btau)$ and $\gamma(\btau)=0$.
\end{remark}

To prove Theorem~\ref{thm:first_moment}, we proceed by induction on the set of decorated trees (for the partial order $\preceq$) 
and use the recurrence formula~\eqref{eq:recurrence}.

\begin{proof}[Proof of Theorem~\ref{thm:first_moment}]
We first note that for the three decorated trees $\btau$ with $w(\btau)=1$ (thus $w(\btau) < 1+\alpha$ for any $\alpha>0$) the first moment estimate is satisfied as indicated by the explicit formula of Proposition~\ref{prop:recurrence}. 

Consider now a decorated tree $\btau$ with $w(\btau)\geq 2$ and suppose by induction that Theorem~\ref{thm:first_moment} is valid for all decorated trees $\bsigma \prec \btau$.  Let us define for any $n\geq k$ the quantity $\omega_{n+1}^{(\btau)}$ equal to:
\begin{align}\label{polynomialgrowthfactor}
	\omega_{n+1}^{(\btau)}:= \prod_{\ell = k}^{n} \Big(1+\frac{w(\btau)}{(1+\alpha)\ell-2}\Big)^{-1}
	\approx n^{-\frac{w(\btau)}{1+\alpha}},
\end{align}
where the latter equivalent is obtained by a standard computation. 
Then, by multiplying~\eqref{eq:recurrence} by this factor and taking the expectation, we get:
\begin{align*}
	\mathbb{E}[\omega_{n+1}^{(\btau)} \cdot \calF_{\btau}(T_{n+1})] 
	= \mathbb{E}[\omega_{n}^{(\btau)} \cdot \calF_{\btau}(T_{n})] 
	+ \sum_{\bsigma \prec \btau} c(\bsigma,\btau) \cdot \frac{\omega_{n+1}^{(\btau)}}{(1+\alpha)n-2}  \cdot\mathbb{E}[\calF_{\bsigma}(T_{n})].
\end{align*}
By iterating the above over $n$, we find
\begin{align}\label{recurrenceformulagoodform}
    \mathbb{E}[\omega_{n+1}^{(\btau)} \cdot \calF_{\btau}(T_{n+1})] 
    = \calF_{\btau}(S) + \sum_{\bsigma \prec \btau} c(\bsigma,\btau) \sum_{\ell=k}^{n} \frac{\omega_{\ell+1}^{(\btau)} }{(1+\alpha)\ell-2}\cdot \mathbb{E}[\calF_{\bsigma}(T_{\ell})].
\end{align}
Thus, the asymptotic behaviour of $\mathbb{E}[\calF_{\btau}(T_{n+1})]$ can be derived from that of $\mathbb{E}[\calF_{\bsigma}(T_{\ell})]$ for $\bsigma \prec \btau$ and $\ell \leq n$.
Define the following variables 
\begin{align*}
	\mathcal{S}_{n}(\bsigma,\btau)
	:= \sum_{\ell=k}^{n} \frac{\omega_{\ell+1}^{(\btau)} }{(1+\alpha)\ell-2}\cdot \mathbb{E}[\calF_{\bsigma}(T_{\ell})].
\end{align*}
The growth rate in $n$ of $\mathcal{S}_{n}(\bsigma,\btau)$ depends on $w(\bsigma)$ through $\mathbb{E}[\calF_{\bsigma}(T_{\ell})]$. 
We distinguish three cases according to the value of $w(\btau)$.
\\
\\
\noindent\boxed{\textbf{1st case: $w(\btau)<1+\alpha$}} \\
\\
In this situation, we necessarily have $w(\bsigma) < 1+\alpha$ for any $\bsigma \prec \btau$. Thus, by  the induction hypothesis:
\begin{align*}
	\mathbb{E}[\calF_{\bsigma}(T_{\ell})] \approx \ell.
\end{align*}
The above together with~\eqref{polynomialgrowthfactor} imply that 
\begin{align*}
	\frac{\omega_{\ell+1}^{(\btau)} }{(1+\alpha)\ell-2} \cdot  \mathbb{E}[\calF_{\bsigma}(T_{\ell})] \approx \ell^{-\frac{w(\btau)}{1+\alpha}}.
\end{align*}
Since $w(\btau)<1+\alpha$, the sum over $\ell$ of the above - which constitutes $\mathcal{S}_{n}(\bsigma,\btau)$ - diverges at rate:
\begin{align*}
	\mathcal{S}_{n}(\bsigma,\btau) 
	= \sum_{\ell=k}^{n} \frac{1}{(1+\alpha)\ell-2} \cdot \omega_{\ell+1}^{(\btau)} 
	\cdot \mathbb{E}[\calF_{\bsigma}(T_{\ell})] \approx n^{1-\frac{w(\btau)}{1+\alpha}}.
\end{align*}
We then sum over every $\bsigma \prec \btau$ to get an asymptotic estimate for the quantity on the right of~\eqref{recurrenceformulagoodform}:
\begin{align}\label{finalestimatefirstcasefirstmoment}
	\mathbb{E}[\omega_{n+1}^{(\btau)} \cdot \calF_{\btau}(T_{n+1})] 
	= \calF_{\btau}(S) + \sum_{\bsigma \prec \btau} c(\bsigma,\btau) \sum_{\ell=k}^{n} 
	\frac{\omega_{\ell+1}^{(\btau)} }{(1+\alpha)\ell-2} \cdot \mathbb{E}[\calF_{\bsigma}(T_{\ell})] 
	\approx n^{1-\frac{w(\btau)}{1+\alpha}},
\end{align}
since there exists at least one $\bsigma$ with $c(\bsigma,\btau)> 0$.
Finally, dividing~\eqref{finalestimatefirstcasefirstmoment} by $\omega_{n+1}^{(\btau)}$ and using~\eqref{polynomialgrowthfactor}, we obtain the expected estimate:
\begin{align*}
\mathbb{E}[\calF_{\btau}(T_{n+1})] \approx n.
\end{align*} 
\\
\noindent\boxed{\textbf{2nd case: $w(\btau)=1+\alpha$}} \\
\\
Now, when $\bsigma \prec \btau$, we can either have $w(\bsigma) < 1+\alpha$ or $w(\bsigma) = 1+\alpha$. In the former situation,
we have 
\begin{align*}
	\frac{1}{(1+\alpha)n-2} \cdot \omega_{n+1}^{(\btau)} \cdot \mathbb{E}[\calF_{\bsigma}(T_{n})] \approx n^{-1},
\end{align*}
hence, 
\begin{align*}
	\mathcal{S}_{n}(\bsigma,\btau) 
	= \sum_{\ell=k}^{n} \frac{1}{(1+\alpha)\ell-2} \cdot \omega_{\ell+1}^{(\btau)} \cdot \mathbb{E}[\calF_{\bsigma}(T_{\ell})] 
	\approx \log(n).
\end{align*}
In the latter situation, by the induction hypothesis,
\begin{align*}
	\mathbb{E}[\calF_{\bsigma}(T_{n+1})] \approx n\cdot (\log{n})^{\gamma_{c}(\bsigma)}.
\end{align*}
Using~\eqref{polynomialgrowthfactor}, we find, 
\begin{align*}
	\frac{1}{(1+\alpha)n-2} \cdot \omega_{n+1}^{(\btau)} \cdot \mathbb{E}[\calF_{\bsigma}(T_{n})] 
	\approx n^{-\frac{w(\btau)}{1+\alpha}} \cdot (\log{n})^{\gamma_{c}(\bsigma)} = n^{-1} \cdot (\log{n})^{\gamma_{c}(\bsigma)}.
\end{align*}
As a consequence, $\mathcal{S}_{n}(\bsigma,\btau)$ diverges at rate:
\begin{align*}
	\mathcal{S}_{n}(\bsigma,\btau) 
	\approx \sum_{\ell=k}^{n} {\ell}^{-1} \cdot   (\log{\ell})^{\gamma_{c}(\bsigma)}
		\approx (\log{n})^{\gamma_{c}(\bsigma)+1}.
\end{align*}
Thus, only the terms $\bsigma$ with maximal weight and $\ct(\bsigma,\btau)>0$ contribute to~\eqref{recurrenceformulagoodform} significantly:
\begin{align*}
	\mathbb{E}[\omega_{n+1}^{(\btau)} \cdot \calF_{\btau}(T_{n+1})] 
	\approx
	\sum_{\substack{\bsigma \prec \btau \\w(\bsigma) < 1+\alpha}}\!\!\! c(\bsigma,\btau) \cdot \log{n} +
	\sum_{\substack{\bsigma \prec \btau \\w(\bsigma) = 1+\alpha}}\!\!\! c(\bsigma,\btau) \cdot (\log{n})^{\gamma_{c}(\bsigma)+1}
	\approx (\log{n})^{\gamma_{c}(\btau)}.
\end{align*}
where $\gamma_{c}(\btau)$ is defined as in Theorem~\ref{thm:first_moment}. 
Dividing the last equation by $\omega_{n+1}^{(\btau)}$ leads to the result. \\
\\
\noindent\boxed{\textbf{3rd case: $w(\btau)>1+\alpha$}} \\
\\
In this case, the trees $\bsigma \prec \btau$ can satisfy either $w(\bsigma)<1+\alpha$, $w(\bsigma)=1+\alpha$, $1+\alpha < w(\bsigma) < w(\btau)$, or $1+\alpha < w(\bsigma)=w(\btau)$. 
In the two first situations, by the induction hypothesis, there exists $\delta>0$ such that:
\begin{align*}
	\frac{1}{(1+\alpha)\ell-2} \cdot \omega_{\ell+1}^{(\btau)} \cdot \mathbb{E}[\calF_{\bsigma}(T_{\ell})] 
	\ll \ell^{-\frac{w(\btau)}{1+\alpha}} \cdot (\log \ell)^{\delta}.
\end{align*}
Since $w(\btau)>1+\alpha$, the sum over $\ell$ of the above converges and $\mathcal{S}_{n}(\bsigma,\btau) \approx 1$.

When $1+\alpha < w(\bsigma) < w(\btau)$, the induction hypothesis implies:
\begin{align*}
    \frac{1}{(1+\alpha)\ell-2} \cdot \omega_{\ell+1}^{(\btau)} \cdot \mathbb{E}[\calF_{\bsigma}(T_{\ell})] 
    \approx \ell^{\frac{w(\bsigma)-w(\btau)}{1+\alpha}-1} \cdot (\log \ell)^{\gamma(\bsigma)}.
\end{align*}
Since $w(\bsigma)<w(\btau)$, the sum over $\ell$ of the above converges again, and $\mathcal{S}_{n}(\bsigma,\btau) \approx 1$.

Finally, when $w(\bsigma)=w(\btau)$, the induction hypothesis gives us:
\begin{align*}
    \frac{1}{(1+\alpha)\ell-2} \cdot \omega_{\ell+1}^{(\btau)} \cdot \mathbb{E}[\calF_{\bsigma}(T_{\ell})] 
    \approx \ell^{-1} \cdot (\log \ell)^{\gamma(\bsigma)}.
\end{align*}
Consequently, by a direct computation, the divergence rate of $\mathcal{S}_{n}(\bsigma,\btau)$ may be shown to be:
\begin{align*}
    \mathcal{S}_{n}(\bsigma,\btau) 
    \approx \sum_{\ell=k}^{n} \ell^{-1} \cdot (\log \ell)^{\gamma(\bsigma)}
    \approx (\log{n})^{\gamma(\bsigma)+1}.
\end{align*}
In conclusion, by considering the asymptotic of $\mathcal{S}_{n}(\bsigma,\btau)$ for all $\bsigma \prec \btau$ with $\ct(\bsigma,\btau)>0$
according to the above, we obtain
\begin{align*}
	\mathbb{E}[\omega_{n+1}^{(\btau)} \cdot \calF_{\btau}(T_{n+1})] 
	\approx
	\sum_{\substack{\bsigma \prec \btau \\ w(\bsigma) < w(\btau)}}\!\!\! c(\bsigma,\btau)  +
	\sum_{\substack{\bsigma \prec \btau \\ w(\bsigma) = w(\btau)}}\!\!\! c(\bsigma,\btau) \cdot (\log{n})^{\gamma(\bsigma)+1}
	\approx (\log{n})^{\gamma(\btau)},
\end{align*}
where the last equivalent is due to how $\gamma(\btau)$ is defined in Theorem~\ref{thm:first_moment}. 
Divide by $\omega_{n+1}^{(\btau)}$ to obtain the expected result.
\end{proof}

\subsection{The second moment of $\calF_{\btau}(T_n)$}\label{secondmomentestimatetheoremsection}

We are now ready to prove the second moment estimate on $\mathcal{F}_{\btau}(T_n)$ of Theorem~\ref{thm:second_moment}.
We will build on the analogous result on the first moment obtained in the previous section as well as on its proof. 
First, remark that the square of the observables may be written as:
\begin{align}\label{squareofobservables}
    \calF_{\btau}(T_n)^2
    = \sum_{\phi_1,\phi_2 : \tau \hookrightarrow T_n} \, \pi(\btau,\phi_1, T_n) \cdot \pi(\btau,\phi_2, T_n),
\end{align}
where the sum is this time over all pairs $\phi_1,\phi_2$ of graph embeddings of $\tau$ in $T_n$. 
We will decompose the sum appearing in~\eqref{squareofobservables} in two parts, according to whether the embeddings $\phi_1(\tau)$ and $\phi_2(\tau)$ overlap or not. The same is done in \cite{curien2015scaling}. Call $\calF_{\btau,\btau}(T_n)$ the first resulting quantity and $\calF_{\btau+\btau}(T_n)$ the second one:
\begin{align*}
     \calF_{\btau,\btau}(T_n)
    = \!\!\!\!\!\sum_{\substack{\phi_1,\phi_2 : \tau \to T_n\\ \phi_1(\tau)\cap \phi_2(\tau) = \emptyset}} \!\!\!\!\!
    \pi(\btau,\phi_1, T_n)  \pi(\btau,\phi_2, T_n) ,
    \quad 
         \calF_{\btau+\btau}(T_n)
    = \!\!\!\!\!\sum_{\substack{\phi_1,\phi_2 : \tau \to T_n\\ \phi_1(\tau)\cap \phi_2(\tau) \neq \emptyset}} \!\!\!\!\!
    \pi(\btau,\phi_1, T_n)  \pi(\btau,\phi_2, T_n).
%
\end{align*}

The first moments of the two quantities above are bounded separately. First we estimate $\bbE[\calF_{\btau+\btau}(T_n)]$, 
which turns out to be the the easiest of the two. Indeed, it may be expressed  as a sum of first moments 
of observables for some decorated trees derived from $\btau$. These are computed using Theorem~\ref{thm:first_moment}. 
To deal with $\mathbb{E}[\calF_{\btau,\btau}(T_n)]$ we will prove a recurrence inequality on such quantities, similar to~\eqref{eq:recurrence} and using the same techniques.

\begin{subsubsection}{An estimate on $\mathbb{E}[\calF_{\btau+\btau}(T_n)]$}\label{sectionsecmompartone}

The goal of this section is to show the following:
\begin{proposition}\label{prop:tau+tau}
	Let $\btau$ be a decorated tree with $\ell(u) \geq 2$ for any $u \in \tau$
    and such that $|\ell| = \sum_{u \in \tau} \ell(u) > 1 + \alpha$. 
   	Then, for any seed $S$,
	\begin{align}\label{eq:tau+tau}
	    \mathbb{E}[\mathcal{F}_{\btau+\btau}(T_{n}^S)] = \calO\big( n^{\frac{2 |\ell|}{1+\alpha}}\big).
    \end{align}
\end{proposition}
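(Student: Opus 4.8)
The plan is to exploit that two \emph{overlapping} embeddings have connected union, so that their joint contribution is an observable attached to a single larger decorated tree, whose first moment is already controlled by Theorem~\ref{thm:first_moment}. Since $\phi_1(\tau)$ and $\phi_2(\tau)$ are subtrees of the tree $T_n$, whenever $\phi_1(\tau)\cap\phi_2(\tau)\neq\emptyset$ this intersection is itself a subtree and the union $\phi_1(\tau)\cup\phi_2(\tau)$ is again a subtree. First I would organise the sum defining $\calF_{\btau+\btau}(T_n)$ according to the \emph{overlap pattern} of the pair: the data of two subtrees $\rho_1,\rho_2\subseteq\tau$ together with an isomorphism $\rho_1\simeq\rho_2$. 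Gluing two copies of $\tau$ along this isomorphism yields a tree $\sigma$, and every pair $(\phi_1,\phi_2)$ realising the given overlap injects into the embeddings $\psi:\sigma\hookrightarrow T_n$ (with $\phi_1,\phi_2$ recovered as the restrictions of $\psi$ to the two copies). As there are only finitely many overlap patterns, it suffices to bound the contribution of each.

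For a fixed pattern I would rewrite the summand $\pi(\btau,\phi_1,T_n)\,\pi(\btau,\phi_2,T_n)$ as a product over the vertices of $\sigma$: a vertex lying in a single copy carries one factor $[\deg_{T_n}(\cdot)-1]_{\ell(\cdot)}$, while an overlapping vertex $w$, the common image of $u_1$ and $u_2$, carries $[\deg_{T_n}(w)-1]_{\ell(u_1)}[\deg_{T_n}(w)-1]_{\ell(u_2)}$. Using the elementary identity $[d]_a[d]_b=\sum_{j=0}^{\min(a,b)}\binom{a}{j}\binom{b}{j}\,j!\,[d]_{a+b-j}$, each overlapping factor expands into a nonnegative combination of falling factorials $[d]_{a+b-j}$ with $0\le j\le\min(a,b)$. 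Distributing over all overlapping vertices thus bounds the contribution of the pattern above by a finite \emph{nonnegative} linear combination $\sum_{\bsigma}c_{\bsigma}\,\calF_{\bsigma}(T_n)$, where each $\bsigma$ has underlying tree $\sigma$, decoration $\ell$ away from the overlap, and decoration $\ell(u_1)+\ell(u_2)-j$ on each overlapping vertex.

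The crucial step is to control $w(\bsigma)$ for every $\bsigma$ that appears. Because $\ell(u)\ge2$ for all $u\in\tau$, every decoration of $\bsigma$ is at least $\max(\ell(u_1),\ell(u_2))\ge2$; in particular $\bsigma$ has no loose leaf, so $w(\bsigma)$ equals the sum of its decorations, which lies in $[\,|\ell|,\,2|\ell|\,]$. The maximum $w(\bsigma)=2|\ell|$ is attained precisely when all overlapping vertices take the top term $j=0$; for such $\bsigma$ every leaf still carries decoration $\ge2$ and $w(\bsigma)=2|\ell|>1+\alpha$, so Remark~\ref{rem:no_log} forces $\gamma(\bsigma)=0$. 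Every other $\bsigma$ satisfies $w(\bsigma)<2|\ell|$.

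It then remains to invoke Theorem~\ref{thm:first_moment}. The top-weight trees give $\mathbb{E}[\calF_{\bsigma}(T_n)]\approx n^{2|\ell|/(1+\alpha)}$ with no logarithmic factor since $\gamma(\bsigma)=0$, whereas each lower-weight tree gives $\mathbb{E}[\calF_{\bsigma}(T_n)]\approx n^{w(\bsigma)/(1+\alpha)}(\log n)^{\gamma(\bsigma)}\ll n^{2|\ell|/(1+\alpha)}$, the strict gap in the polynomial exponent absorbing any power of $\log n$. Summing these finitely many estimates yields $\mathbb{E}[\calF_{\btau+\btau}(T_n)]=\calO(n^{2|\ell|/(1+\alpha)})$. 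I expect the only delicate point to be the bookkeeping of overlap patterns: cleanly matching overlapping pairs of embeddings with embeddings of glued decorated trees (an injection suffices for the upper bound) and checking that the falling-factorial expansion produces exactly the decorations claimed. The weight estimate and the final appeal to Theorem~\ref{thm:first_moment} are then routine.
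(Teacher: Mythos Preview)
Your proposal is correct and essentially mirrors the paper's own argument: the paper likewise groups overlapping pairs $(\phi_1,\phi_2)$ by their merger pattern $\calM(\sigma_1,\sigma_2)$, applies the same falling-factorial identity $[n]_{\ell_1}[n]_{\ell_2}=\sum_j\frac{\ell_1!\,\ell_2!}{(\ell_1-j)!(\ell_2-j)!j!}\,[n]_{\ell_1+\ell_2-j}$ at each overlap vertex, and then invokes Theorem~\ref{thm:first_moment} together with Remark~\ref{rem:no_log} exactly as you do. The only cosmetic difference is that the paper records the decomposition as an exact equality (its Lemma~\ref{lem:tau+tau2}) rather than the upper bound you settle for, and your observation that $w(\bsigma)\geq|\ell|>1+\alpha$ for every term slightly streamlines the case analysis.
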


The proposition is based on the following lemma, which we prove below.

\begin{lemma}\label{lem:tau+tau2}
	Let $\btau = (\tau,\ell)$ be a decorated tree. 
	There exists a finite set $\calU(\btau)$ of decorated trees $\bsigma$ with $w(\bsigma) \leq 2w(\btau)$  
	and positive constants $C(\btau,\bsigma)$ for $\bsigma \in \calU(\btau)$ (see the proof for an explicit description) such that, 
	for any tree $T$, 
	\begin{align}\label{eq:tau+tau2}
    	\calF_{\btau+\btau}(T) 
    	= \sum_{\bsigma \in \calU(\btau)} C(\btau,\bsigma) \cdot \calF_{\bsigma}(T).
	\end{align}
\end{lemma}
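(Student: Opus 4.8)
The plan is to expand $\calF_{\btau+\btau}(T)$ directly from its definition and recognise each summand as a term in the expansion of some $\calF_{\bsigma}(T)$, where $\bsigma$ is a decorated tree obtained by "gluing" two copies of $\tau$ along a non-empty overlap. Recall that $\calF_{\btau+\btau}(T) = \sum \pi(\btau,\phi_1,T)\pi(\btau,\phi_2,T)$, where the sum is over pairs of embeddings $\phi_1,\phi_2:\tau\hookrightarrow T$ with $\phi_1(\tau)\cap\phi_2(\tau)\neq\emptyset$. A pair $(\phi_1,\phi_2)$ is determined by the combinatorial ``overlap pattern'' --- that is, the subgraph $\phi_1(\tau)\cap\phi_2(\tau)$ together with how it sits inside each copy of $\tau$ --- and then by a single embedding of the union. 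Since $\tau$ is a fixed finite tree, there are finitely many such overlap patterns; for each one I will build an abstract tree $\sigma$ (the pushout of the two copies of $\tau$ along the identified subtree) and decorate it.

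First I would make precise the set of overlap patterns. An overlap pattern is a triple $(\rho, \iota_1,\iota_2)$ where $\rho$ is a tree and $\iota_j:\rho\hookrightarrow\tau$ are embeddings ($j=1,2$) with the property that, when one forms the pushout $\sigma := \tau\sqcup_\rho\tau$, the result is again a tree (equivalently, the images do not create a cycle) and $\rho$ is exactly the intersection; one identifies patterns that differ by an automorphism. For each such pattern, every vertex $v$ of $\sigma$ comes from one or both copies of $\tau$, carrying a decoration $\ell_1(v)$ and/or $\ell_2(v)$ inherited from $\ell$. The issue is that the factor attached to $v$ in $\pi(\btau,\phi_1,T)\pi(\btau,\phi_2,T)$ is $[\deg_T\phi(v)-1]_{\ell_1(v)}\cdot[\deg_T\phi(v)-1]_{\ell_2(v)}$, a \emph{product} of two falling factorials of the same argument, not a single falling factorial. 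So the second step is a small algebraic lemma: for any integers $a,b\geq 0$ and any $m$, the product $[m]_a[m]_b$ can be written as $\sum_{j} \binom{a}{j}\binom{b}{j} j! \, [m]_{a+b-j}$ (the Vandermonde/overlap identity for falling factorials). Applying this at each shared vertex converts the product form into a nonnegative integer combination of genuine $\pi(\bsigma',\phi,T)$'s, where $\bsigma'$ ranges over $\sigma$ equipped with various decorations $\ell'(v) = \ell_1(v)+\ell_2(v)-j_v$ (and $\ell'(v)=\ell_1(v)$ or $\ell_2(v)$ at non-shared vertices). Summing over embeddings $\phi$ of $\sigma$ in $T$ and then over all patterns yields \eqref{eq:tau+tau2}, with $\calU(\btau)$ the (finite) set of all decorated trees $\bsigma'$ so produced and $C(\btau,\bsigma)$ the accumulated nonnegative coefficients.

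For the weight bound: each vertex of $\bsigma'$ has decoration at most $\ell_1(v)+\ell_2(v)$, so $|\ell'|\leq 2|\ell|$; and since the overlap $\rho$ is non-empty, $\sigma$ has strictly fewer vertices than the disjoint union of two copies of $\tau$, but that is not needed --- what is needed is $w(\bsigma')\leq 2w(\btau)$. Here one checks that any loose leaf of $\bsigma'$ either is a loose leaf of one of the two copies of $\tau$ or arises from a leaf $v$ with $\ell_1(v)+\ell_2(v)-j_v=0$, forcing $\ell_1(v)=\ell_2(v)=j_v=0$, i.e. $v$ was a loose leaf in both copies; counting these against the decorations gives $w(\bsigma')\leq |\ell'| + \#\{\text{loose leaves}\} \leq 2|\ell| + 2(\#\text{loose leaves of }\btau) = 2w(\btau)$. (When $\btau$ has all leaf-decorations $\geq 2$, as in Proposition~\ref{prop:tau+tau}, there are no loose leaves at all and the bound is simply $w(\bsigma')\leq 2|\ell|$.)

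The main obstacle I anticipate is purely bookkeeping rather than conceptual: enumerating overlap patterns cleanly, handling automorphisms so as not to over- or under-count pairs $(\phi_1,\phi_2)$, and keeping track of which decorations land on which vertices of the pushout. None of this is deep --- $\tau$ is fixed and finite, so all sets involved are finite and all coefficients are manifestly nonnegative integers --- but writing down an unambiguous ``explicit description'' of $\calU(\btau)$ and $C(\btau,\bsigma)$ requires care. The falling-factorial product identity is the only genuinely algebraic input, and it is elementary.
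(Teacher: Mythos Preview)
Your proposal is correct and follows essentially the same route as the paper: both group overlapping pairs $(\phi_1,\phi_2)$ by the merger/pushout of the two copies of $\tau$ along the common subtree, then linearise the product of falling factorials at shared vertices via the Vandermonde-type identity $[m]_a[m]_b=\sum_j \binom{a}{j}\binom{b}{j}j!\,[m]_{a+b-j}$ (this is the paper's Fact~\ref{lem:[n]}). The only addition in your write-up is a more explicit check of the loose-leaf count for the bound $w(\bsigma)\le 2w(\btau)$, which the paper dismisses as immediate.
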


In the planar setting the above is very intuitive; we sketch a proof below.
If $T$ is plane, then $\calF_{\btau+\btau}(T)$ is the number of decorated embeddings $(\phi_1,\phi_2)$ 
of two copies $\tau_1$ and $\tau_2$ of $\tau$ in $T$, which overlap. 

Let us first forget about the decorations and focus on graph embeddings. 
The union of the images of $\tau_1$ and $\tau_2$ via such embeddings is a tree $\sigma$; 
one may see $\sigma$ as a merger of $\tau_1$ and $\tau_2$. 
Thus, the pairs of overlapping embeddings of $\tau_1$ and $\tau_2$ in $T$ are in bijection with the embeddings of $\sigma$ in $T$ where $\sigma$ ranges over all possible mergers of $\tau_1$ and $\tau_2$.

Now consider overlapping decorated embeddings of $\tau_1$ and $\tau_2$ in $T$. 
Then, each corner of $T$ may have no arrow pointing to it, an arrow of $\tau_1$, 
an arrow of $\tau_2$ or one arrow of $\tau_1$ and one of $\tau_2$ pointing to it.
In the first three cases, the arrows pointing to the corner will be considered as arrows of $\sigma$; 
in the last case, the arrow of $\tau_1$ and that of $\tau_2$ merge into a single arrow of $\sigma$. 
Thus, any such pair of decorated embeddings corresponds to a decorated embedding of some decorated tree  $\bsigma = (\sigma,m)$ obtained as a merger of $\btau_1$ and $\btau_2$. In particular $|m| \leq |\ell_1| + |\ell_2| = 2|\ell|$. 
The constants $C(\btau,\bsigma)$ are combinatorial factors that account for the different ways to merge arrows of $\btau_1$ and $\btau_2$. 

The actual proof given below avoids the use of the planar model and is more algebraic.

\begin{proof}[Proof of Lemma \ref{lem:tau+tau2}]
Fix $\btau$ and $T$ as in the lemma. 
Consider two embeddings $\Phi_1,\Phi_2$ of $\tau$ in $T$ whose images intersect. 
Since $T$ is a tree, $\Phi_1(\tau) \cup \Phi_2(\tau)$ is itself a subtree of $T$.
Moreover, $\Phi_1(\tau) \cap \Phi_2(\tau)$ is the image via $\Phi_1$ and $\Phi_2$, respectively, of two isomorphic subtrees $\sigma_1$ and $\sigma_2$ of $\tau$. 

For two isomorphic subtrees $\sigma_1$ and $\sigma_2$ of $\tau$, define the $(\sigma_1,\sigma_2)$-merger of two copies of $\tau$
as the tree obtained by ``gluing'' two copies of $\tau$ along $\sigma_1$ and $\sigma_2$ respectively.
Write $\calM(\sigma_1,\sigma_2)$ for this tree. 
To identify the two copies of $\tau$ merged to obtain $\calM(\sigma_1,\sigma_2)$, call them $\btau_1 = (\tau_1,\ell_1)$ and 
$\btau_2 = (\tau_2,\ell_2)$.

Each vertex of $\calM(\sigma_1,\sigma_2)$ is clearly identified to either one vertex in $\tau_1 \setminus \sigma_1$, 
a vertex in $\tau_2 \setminus \sigma_2$ or to a vertex in $\sigma_1$ and simultaneously to one in $\sigma_2$. 
For a vertex $u \in \calM(\sigma_1,\sigma_2)$ write $\ell_1(u)$ for its decoration in $\btau_1$, if it is identified to a vertex of $\tau_1$, 
otherwise set $\ell_1(u) = 0$. Define $\ell_2(u)$ for $u \in \calM(\sigma_1,\sigma_2)$ in the same way. 

Then, the pairs of embeddings $\Phi_1,\Phi_2$ of $\tau$ in $T$ with $\Phi_1(\tau) \cap \Phi_2(\tau) = \Phi_1(\sigma_1) = \Phi_2(\sigma_2)$
are in bijection with the embeddings of $\calM(\sigma_1,\sigma_2)$ in $T$. 
It follows that 
\begin{align}\label{eq:tau+tau3}
	\calF_{\btau+\btau}(T) =
	\sum_{(\sigma_1,\sigma_2)} \, \sum_{\phi : \calM(\sigma_1,\sigma_2) \hookrightarrow T}\, \prod_{u \in \calM(\sigma_1,\sigma_2)}
	[\deg_{T}(\phi(u))-1]_{\ell_1(u)} \cdot [\deg_{T}(\phi(u))-1]_{\ell_2(u)},
\end{align}
where the first sum is over all pairs of isomorphic subtrees $(\sigma_1, \sigma_2)$ of $\tau$. 
To reduce the above to a formula of the type~\eqref{eq:observable}, we use the following combinatorial identity. 

\begin{fact}\label{lem:[n]}
	Fix non-negative integers $n, \ell_1,\ell_2$. Then 
	\begin{align*}
		[n]_{\ell_1}\cdot [n]_{\ell_2}
		 = \sum_{j = 0 }^{\min\{\ell_1,\ell_2\}}\, [n]_{\ell_1 + \ell_2 - j} 	\cdot \frac{\ell_1! \cdot \ell_2!}{(\ell_1 - j)!\,(\ell_2 - j)!\,j!}.
		 \end{align*}
\end{fact}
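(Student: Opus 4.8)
The plan is to prove Fact~\ref{lem:[n]} by a direct double-counting argument, realising both sides as the cardinality of one explicit finite set. Recall that for nonnegative integers $n$ and $d$, the falling factorial $[n]_d$ counts the injections from a fixed $d$-element set into $\{1,\dots,n\}$ (equivalently, the ordered tuples of $d$ distinct elements of $\{1,\dots,n\}$), with the usual conventions making it $0$ when $d>n$ and $1$ when $d=0$. Consequently the left-hand side $[n]_{\ell_1}\cdot[n]_{\ell_2}$ is exactly the number of pairs $(f_1,f_2)$ where $f_1\colon\{1,\dots,\ell_1\}\hookrightarrow\{1,\dots,n\}$ and $f_2\colon\{1,\dots,\ell_2\}\hookrightarrow\{1,\dots,n\}$ are injections.

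I would then partition this set of pairs according to the integer $j := \lvert\,\mathrm{im}(f_1)\cap\mathrm{im}(f_2)\,\rvert$, which ranges over $0,\dots,\min\{\ell_1,\ell_2\}$. To count the pairs with a prescribed value of $j$: first choose $f_1$ freely ($[n]_{\ell_1}$ ways), which fixes the $\ell_1$-element set $\mathrm{im}(f_1)$; then build $f_2$ by choosing which $j$ elements of its domain are sent into $\mathrm{im}(f_1)$ ($\binom{\ell_2}{j}$ ways), injecting those $j$ elements into $\mathrm{im}(f_1)$ ($[\ell_1]_j$ ways), and injecting the remaining $\ell_2-j$ domain elements into the complement $\{1,\dots,n\}\setminus\mathrm{im}(f_1)$, which has size $n-\ell_1$ ($[n-\ell_1]_{\ell_2-j}$ ways). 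Thus there are $[n]_{\ell_1}\cdot[n-\ell_1]_{\ell_2-j}\cdot\binom{\ell_2}{j}\cdot[\ell_1]_j$ pairs with overlap exactly $j$.

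Finally I would simplify: $[n]_{\ell_1}\cdot[n-\ell_1]_{\ell_2-j}=\tfrac{n!}{(n-\ell_1-\ell_2+j)!}=[n]_{\ell_1+\ell_2-j}$ and $\binom{\ell_2}{j}\cdot[\ell_1]_j=\tfrac{\ell_1!\,\ell_2!}{(\ell_1-j)!\,(\ell_2-j)!\,j!}$, so the count for a given $j$ is precisely the $j$-th summand on the right-hand side; summing over $j$ gives the claimed identity. There is essentially no obstacle here: the only points needing a little care are the boundary conventions for $[\cdot]_{\cdot}$ (so that the argument stays valid when $n$ is small or when $\ell_1=\ell_2$), and the remark that since both sides are polynomials in $n$ of degree $\ell_1+\ell_2$, verifying the identity for all nonnegative integers $n$ — which the counting does — suffices in full generality. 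As an alternative one could argue by induction on $\min\{\ell_1,\ell_2\}$ via $[n]_{\ell}=(n-\ell+1)[n]_{\ell-1}$, but the bijective proof is more transparent and fits the combinatorial flavour of the surrounding section on mergers of decorated trees.
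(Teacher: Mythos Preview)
Your proof is correct and follows essentially the same double-counting strategy as the paper: both partition according to the size of the overlap $j$ and arrive at the same summand. The only cosmetic difference is that the paper counts pairs of unordered subsets $A_1,A_2\subseteq\{1,\dots,n\}$ via $[n]_\ell=\binom{n}{\ell}\,\ell!$ and multiplies by $\ell_1!\,\ell_2!$ at the end, whereas you count pairs of injections directly; the underlying bijection is the same.
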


\begin{proof}
	First observe that $[n]_\ell = \binom{n}{\ell} \cdot \ell!$.
	Now let us inspect the quantity $\binom{n}{\ell_1}	\binom{n}{\ell_2} $ 
	which is the number of pairs of subsets $A_1,A_2$ of $\{1,\dots, n\}$ with $\ell_1$ and $\ell_2$ elements, respectively. 
	These may be counted as follows. 
	First establish the set $A_1 \cup A_2$ which can have $\ell_1 + \ell_2 - j$ elements with $j \in \{0,\dots,\min\{\ell_1,\ell_2\} \}$.  
	Once $A_1 \cup A_2$ is chosen, split it into $A_1 \setminus A_2$, $A_2 \setminus A_1$ and $A_1 \cap A_2$. 
	This forms a partition of $A_1 \cup A_2$ into three sets of cardinality $\ell_1 - j$, $\ell_2 - j$ and $j$, respectively. 
	Thus
	\begin{align*}
		\binom{n}{\ell_1} \cdot	\binom{n}{\ell_2} 
		=\sum_{j = 0 }^{\min\{\ell_1,\ell_2\}} \binom{n}{\ell_1 + \ell_2 - j} \frac{(\ell_1 + \ell_2 - j)!}{(\ell_1 - j)!\,(\ell_2 - j)!\,j!}.
	\end{align*}
	Multiply by $\ell_1! \cdot \ell_2!$ to find the desired result. 
\end{proof}

Let us return to the proof of Lemma \ref{lem:tau+tau2}.
A valid decoration $m$ for a tree in $\calM(\sigma_1,\sigma_2)$ is one such that, for all $u \in \calM(\sigma_1,\sigma_2)$, 
\begin{align}\label{eq:valid_decoration}
	\max\{\ell_1(u), \ell_2(u)\} \leq  m(u) \leq \ell_1(u) + \ell_2(u). 
\end{align}
Observe that, if $u$ is not in the images of $\sigma_1$ and $\sigma_2$, then $m(u)$ is entirely determined by the above. 
For vertices than are in the overlap of $\tau_1$ and $\tau_2$, $m(u)$ may take one of several values.

Applying Fact~\ref{lem:[n]} to the summand in~\eqref{eq:tau+tau3}, we find
\begin{align*}
	 &\prod_{u \in \calM(\sigma_1,\sigma_2)}	[\deg_{T}(\phi(u))-1]_{\ell_1(u)} \cdot [\deg_{T}(\phi(u))-1]_{\ell_2(u)}\\
	 &= 	\prod_{u \in \calM(\sigma_1,\sigma_2)}\sum_{j = 0 }^{\min\{\ell_1(u),\ell_2(u)\}}\, [\deg_{T}(\phi(u))-1]_{\ell_1(u) + \ell_2(u) - j} 	
	  \cdot \frac{\ell_1(u)! \cdot \ell_2(u)!}{(\ell_1(u) - j)!\,(\ell_2(u) - j)!\,j!}	\\
	 &= \sum_{m}
	\prod_{u \in \calM(\sigma_1,\sigma_2)}
	[\deg_{T}(\phi(u))-1]_{m(u)}\cdot 
	\frac{\ell_1(u)! \cdot \ell_2(u)!}{(m(u) - \ell_2(u))!(m(u) - \ell_1(u))!(\ell_1(u) + \ell_2(u) - m(u))!
} \\
&= \sum_{m}
	C(\btau, (\calM(\sigma_1,\sigma_2), m))
	\prod_{u \in \calM(\sigma_1,\sigma_2)}
	[\deg_{T}(\phi(u))-1]_{m(u)},
\end{align*}
where the sum in the last two terms is over all valid decorations $m$ of $\calM(\sigma_1,\sigma_2)$
and 
\begin{align*}
	C(\btau,(\calM(\sigma_1,\sigma_2), m)) 
	:= \prod_{u \in \calM(\sigma_1,\sigma_2)} \frac{\ell_1(u)! \cdot \ell_2(u)!}{(m(u) - \ell_2(u))!(m(u) - \ell_1(u))!(\ell_1(u) + \ell_2(u) - m(u))!}.
\end{align*}
Write $\boldsymbol{\calM}$ for the tree $\calM(\sigma_1,\sigma_2)$ with decoration $m$. 
Then 
$$\sum_{\phi: \calM(\sigma_1,\sigma_2) \hookrightarrow T} \prod_{u \in \calM(\sigma_1,\sigma_2)}[\deg_{T}(\phi(u))-1]_{m(u)} = \calF_{\boldsymbol{\calM}}(T).$$
Inserting this into~\eqref{eq:tau+tau3}, we find 
\begin{align*}
	\calF_{\btau+\btau}(T)
	= \sum_{\boldsymbol{\calM}} C(\btau,\boldsymbol{\calM}) \calF_{\boldsymbol{\calM}}(T), 
\end{align*}
where the sum is over all trees of the form $\calM(\sigma_1,\sigma_2)$ with valid decorations $m$. These form the set $\calU(\btau)$;
it is immediate that they have weight at most  $2 w(\btau)$.
\end{proof}

We are finally ready to prove Proposition~\ref{prop:tau+tau}
\begin{proof}[Proof of Proposition~\ref{prop:tau+tau}]
	Fix a decorated tree as in the statement. 
	By Theorem~\ref{thm:first_moment}, for any tree $\bsigma \in \calU(\btau)$ of weight strictly smaller than $2w(\btau)$, 
	\begin{align*}
		\bbE[\calF_{\bsigma}(T_n^S)] \ll n^{\frac{2w(\btau)}{1+\alpha}}.
	\end{align*}
	Moreover, if $\bsigma  = (\sigma,m) \in \calU(\btau)$ is such that $w(\bsigma) = 2w(\btau)$ 
	then its decorations satisfy 
	$m(u) = \ell_1(u) + \ell_2(u) \geq 2$ for all $u \in \sigma$ (see~\eqref{eq:valid_decoration} for how $m$ is defined). 
	As explained in Remark~\ref{rem:no_log}, for any such tree 
	\begin{align*}
		\bbE[\calF_{\bsigma}(T_n^S)] \approx n^{\frac{2w(\btau)}{1+\alpha}}.
	\end{align*}
	Using~\eqref{eq:tau+tau2} and observing that $\btau$ has no loose leaf, we conclude that 
	\begin{align*}
		\bbE[\calF_{\btau+\btau}(T_n^S)] = \calO\big(n^{\frac{2w(\btau)}{1+\alpha}}\big)= \calO\big(n^{\frac{2|\ell|}{1+\alpha}}\big).
	\end{align*}
	
\end{proof}

\end{subsubsection}

\begin{subsubsection}{An estimate on $\mathbb{E}[\calF_{\btau,\btau}(T_n)]$}\label{sectionsecmomparttwo}

We start this section by defining a wider class of observables that will be involved in a recurrence relation which will eventually allow us to estimate  $\mathbb{E}[\calF_{\btau,\btau}(T_n)]$. 
Let $\btau,\bsigma$ be two decorated trees and $T$ a (bigger) tree. 
Then we denote by $\calF_{\btau,\bsigma}(T)$ the following integer-valued observable:
\begin{align}\label{defobservablesecmomembeddingsnooverlap}
    \calF_{\btau,\bsigma}(T) := \underset{\phi_1(\tau)\cap\phi_2(\sigma)
    =\emptyset}{\sum_{\phi_1,\phi_2}} \, \pi(\btau,\phi_1, T) \cdot \pi(\bsigma,\phi_2, T),
\end{align}
where the sum is over all graph embeddings $\phi_1$, resp. $\phi_2$, of $\tau$, resp. $\sigma$, in $T$ with no overlap in their image. 
The quantity of interest to us is that with $\bsigma=\btau$ and $T=T_n$. 
This section is concerned with proving the following bound. 

\begin{proposition}\label{propupperboundsecmomembeddingsnooverlap}
    Let $T_n$ be an $\alpha$-PA tree. Then it holds that: 
    \begin{align}\label{asymptoticupperboundsecmomembeddingsnooverlap}
    	\mathbb{E}[\calF_{\btau,\bsigma}(T_n)] 
		= \mathcal{O}\big( \mathbb{E}[\calF_{\btau}(T_n)] \cdot \mathbb{E}[\calF_{\bsigma}(T_n)]\big).
    \end{align}
\end{proposition}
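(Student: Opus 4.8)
The plan is to set up a recurrence relation for $\mathbb{E}[\calF_{\btau,\bsigma}(T_n)]$ parallel to~\eqref{eq:recurrence}, but now indexed by \emph{pairs} of decorated trees, and then to solve it asymptotically using Theorem~\ref{thm:first_moment} for the individual first moments. Concretely, I would fix the seed $S$, drop it from the notation, and analyse the transition from $T_n$ to $T_{n+1}$. In $\calF_{\btau,\bsigma}(T_{n+1})$ the sum is over pairs $(\phi_1,\phi_2)$ of embeddings of $\tau$ and $\sigma$ with disjoint images; classify such pairs according to which of the maps hits the new vertex $v_{n+1}$ and/or its neighbour $u_n$. Since $\phi_1$ and $\phi_2$ have disjoint images, at most one of them can contain $v_{n+1}$, and the same case analysis as in the proof of Proposition~\ref{prop:recurrence} applies separately to whichever map is involved (the other map is unaffected, as it embeds entirely in $T_n$). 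Using the identity $[k]_\ell = [k-1]_\ell + \ell [k-1]_{\ell-1}$ and the attachment probability $\bbP(\phi(\tu)=u_n\mid \calE_n) = \frac{\deg_{T_n}\phi(\tu)-1+\alpha}{(1+\alpha)n-2}$ exactly as before, one obtains
\begin{align*}
	\bbE[\calF_{\btau,\bsigma}(T_{n+1})\mid \calE_n]
	= \Big(1 + \tfrac{w(\btau)+w(\bsigma)}{(1+\alpha)n-2}\Big)\calF_{\btau,\bsigma}(T_n)
	+ \tfrac{1}{(1+\alpha)n-2}\sum_{(\btau',\bsigma')} \ct'(\btau',\bsigma') \cdot \calF_{\btau',\bsigma'}(T_n),
\end{align*}
where the sum is over pairs with $(\btau',\bsigma')$ strictly smaller than $(\btau,\bsigma)$ in the product order induced by $\prec$ (i.e.\ $\btau'\preceq\btau$, $\bsigma'\preceq\bsigma$, not both equalities), and the constants are nonnegative. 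There is one subtlety: when the map containing $v_{n+1}$ hits $u_n$, the \emph{other} map must avoid $u_n$; handling the $\ind_{\phi_2(\cdot)\neq u_n}$ constraint produces, just as in the single-tree case, a cancellation $\prod \ind_{\phi(\cdot)\neq u_n} = 1 - \sum \ind_{\phi(\cdot)=u_n}$ that I would expand and fold into the lower-order terms. The one genuinely new feature is that an embedding of $\tau$ using $v_{n+1}$ together with a disjoint embedding of $\sigma$ \emph{not} using $v_{n+1}$ could, after forgetting the loose leaf, produce a pair of embeddings whose images overlap (the leaf's neighbour could now coincide with a vertex in $\sigma$'s image); such terms must be recognised as lower-order observables of the \emph{overlapping} type — i.e.\ as pieces of $\calF_{\btau'+\bsigma'}$-style quantities — and bounded crudely using Theorem~\ref{thm:first_moment}. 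Alternatively, and more cleanly, I would define $\calF_{\btau,\bsigma}$ with the overlap constraint dropped would not work, so instead I would simply absorb these stray overlapping contributions into an error term that is $\calO$ of a product of two individual first moments of strictly smaller weight, which is all that is needed.

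With the recurrence in hand, I would prove~\eqref{asymptoticupperboundsecmomembeddingsnooverlap} by induction on the pair $(\btau,\bsigma)$ for the product partial order. Multiply the recurrence by the integrating factor $\omega_{n+1}^{(\btau,\bsigma)} := \prod_{\ell=k}^n (1 + \tfrac{w(\btau)+w(\bsigma)}{(1+\alpha)\ell-2})^{-1} \approx n^{-\frac{w(\btau)+w(\bsigma)}{1+\alpha}}$, take expectations, and iterate to get
\begin{align*}
	\bbE[\omega_{n+1}^{(\btau,\bsigma)}\calF_{\btau,\bsigma}(T_{n+1})]
	= \calF_{\btau,\bsigma}(S) + \sum_{(\btau',\bsigma')} \ct'(\btau',\bsigma') \sum_{\ell=k}^n \tfrac{\omega_{\ell+1}^{(\btau,\bsigma)}}{(1+\alpha)\ell-2}\,\bbE[\calF_{\btau',\bsigma'}(T_\ell)].
\end{align*}
By the induction hypothesis $\bbE[\calF_{\btau',\bsigma'}(T_\ell)] = \calO(\bbE[\calF_{\btau'}(T_\ell)]\,\bbE[\calF_{\bsigma'}(T_\ell)])$, and by Theorem~\ref{thm:first_moment} the right-hand side is $\calO(\ell^{\max\{1, w(\btau')/(1+\alpha)\} + \max\{1, w(\bsigma')/(1+\alpha)\}}(\log \ell)^{\gamma(\btau')+\gamma(\bsigma')})$. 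The point is that the exponent $w(\btau')+w(\bsigma') \le w(\btau)+w(\bsigma)$ with strict inequality unless the weights are individually preserved, and in the latter case the extra logarithmic factors gained from summing over $\ell$ are exactly matched by how $\gamma(\btau)$ and $\gamma(\bsigma)$ are defined — so the bookkeeping is term-by-term analogous to the three-case analysis (comparing $w$ against $1+\alpha$, and against $w(\btau)$) in the proof of Theorem~\ref{thm:first_moment}, just carried out for the \emph{sum} of the two weights against $2(1+\alpha)$ and against $w(\btau)+w(\bsigma)$. Dividing through by $\omega_{n+1}^{(\btau,\bsigma)}$ and comparing with the product of the two estimates from Theorem~\ref{thm:first_moment} closes the induction.

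The main obstacle I anticipate is not the asymptotic solution of the recurrence — that is routine bookkeeping once the three-case template of Theorem~\ref{thm:first_moment} is in place — but rather \emph{setting up the recurrence correctly}, specifically controlling the terms where an embedding that used the fresh leaf $v_{n+1}$ degenerates, upon removing the leaf, into a configuration that violates the disjointness constraint between the two images. One must check that every such term is genuinely of strictly smaller total weight (so it can be thrown into the error without spoiling the induction), and that the disjointness-preserving "good" terms reproduce exactly the structure of~\eqref{eq:recurrence} with $w(\btau)$ replaced by $w(\btau)+w(\bsigma)$. I would handle this by being scrupulous about the base cases (pairs where one or both of $\btau,\bsigma$ has weight $1$, where $\calF_{\btau,\bsigma}$ is essentially a counting function computable by hand, mirroring the three explicit formulas in Proposition~\ref{prop:recurrence}) and by invoking Lemma~\ref{combinatoriallemma} verbatim to re-express any same-weight $(\btau\setminus\tv)^+$-type trees appearing on one coordinate in terms of strictly-lower-weight ones, exactly as in the single-tree proof.
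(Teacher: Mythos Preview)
Your overall plan is right and matches the paper's: set up a pair-indexed recurrence with leading factor $1+\tfrac{w(\btau)+w(\bsigma)}{(1+\alpha)n-2}$, iterate against the integrating factor $\omega_n^{(\btau,\bsigma)}$, and close by induction on pairs for the product order, using Theorem~\ref{thm:first_moment} to control the inner sums. The base case (one factor equal to $\small{\textcircled{\tiny{0}}}$) and the three-case asymptotic analysis you sketch are exactly what the paper does.

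However, you misidentify the place where the disjointness constraint bites. Forgetting the loose leaf $\tv$ mapped to $v_{n+1}$ \emph{cannot} create overlap: the restriction $\tilde\phi_1$ of $\phi_1$ to $\tau\setminus\tv$ has image $\phi_1(\tau)\setminus\{v_{n+1}\}$, which is still disjoint from $\phi_2(\sigma)$ (in particular $\phi_1(\tu)=u_n$ was already in $\phi_1(\tau)$, so already excluded from $\phi_2(\sigma)$). The computation leading to the analogue of~\eqref{eq:F3} therefore goes through \emph{exactly}, producing terms $\calF_{(\btau\setminus\tv)^+,\bsigma}(T_n)$ with no error. The real obstruction is that Lemma~\ref{combinatoriallemma} does \emph{not} hold verbatim for the pair observable, contrary to what you write at the end. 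In the proof of Lemma~\ref{combinatoriallemma} one counts extensions of an embedding $\tilde\psi_1:\tau\setminus\tv\hookrightarrow T$ to an embedding $\psi_1:\tau\hookrightarrow T$; in the pair setting one must additionally require that $\psi_1(\tv)\notin\phi_2(\sigma)$, and this can kill one of the extensions. Because $T$ is a tree and $\tilde\psi_1(\tu)\notin\phi_2(\sigma)$, at most one neighbour of $\tilde\psi_1(\tu)$ lies in $\phi_2(\sigma)$, so the identity~\eqref{equalitycombinatoriallemma} is replaced by the two-sided bound
\[
\calF_{\btau,\bsigma}(T)+C\,\calF_{\btau\setminus\tv,\bsigma}(T)\ \le\ \calF_{(\btau\setminus\tv)^+,\bsigma}(T)\ \le\ \calF_{\btau,\bsigma}(T)+(C+1)\,\calF_{\btau\setminus\tv,\bsigma}(T),
\]
with $C=\deg_\tau(\tu)-\ell(\tu)-2$ (this is the paper's Lemma~\ref{combinatoriallemmasecmom}). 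Plugging the upper bound in yields a recurrence \emph{inequality} rather than an equality, which is all that is needed for~\eqref{asymptoticupperboundsecmomembeddingsnooverlap}. Your suggestion to absorb the discrepancy into overlapping-type observables $\calF_{\btau'+\bsigma'}$ and bound those via Theorem~\ref{thm:first_moment} is not needed and would be more laborious; the one-line tree observation above keeps everything inside the $\calF_{\cdot,\cdot}$ world.
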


Proposition~\ref{propupperboundsecmomembeddingsnooverlap} implies directly the bound necessary for the proof of Theorem~\ref{thm:second_moment}. 

\begin{corollary}\label{cor:tautau}
	Let $\btau$ be a decorated tree with $\ell(u) \geq 2$ for any $u \in \tau$
    and such that $|\ell| > 1 + \alpha$. 
   	Then, for any seed $S$,
	\begin{align}\label{eq:tautau}
	    \mathbb{E}[\mathcal{F}_{\btau,\btau}(T_{n}^S)] = \calO\big( n^{\frac{2 |\ell|}{1+\alpha}}\big).
    \end{align}
\end{corollary}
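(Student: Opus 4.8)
The plan is to deduce Corollary~\ref{cor:tautau} as an immediate consequence of Proposition~\ref{propupperboundsecmomembeddingsnooverlap} together with the first-moment estimate of Theorem~\ref{thm:first_moment}, and then to concentrate the work on proving Proposition~\ref{propupperboundsecmomembeddingsnooverlap} itself. For the corollary: specialising Proposition~\ref{propupperboundsecmomembeddingsnooverlap} to $\bsigma = \btau$ gives $\mathbb{E}[\calF_{\btau,\btau}(T_n)] = \mathcal{O}\big(\mathbb{E}[\calF_{\btau}(T_n)]^2\big)$. Since $\btau$ has all decorations $\ell(u) \geq 2$, it has no loose leaves, so $w(\btau) = |\ell|$; since moreover $|\ell| > 1+\alpha$, Remark~\ref{rem:no_log} applies and Theorem~\ref{thm:first_moment} yields $\mathbb{E}[\calF_{\btau}(T_n)] \approx n^{\frac{|\ell|}{1+\alpha}}$ (no logarithmic correction). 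Squaring gives the claimed $\mathcal{O}(n^{\frac{2|\ell|}{1+\alpha}})$.

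The substance is therefore Proposition~\ref{propupperboundsecmomembeddingsnooverlap}, which I would prove by a recurrence-and-induction argument on the pair of decorated trees $(\btau,\bsigma)$, mirroring the structure of Sections~\ref{sec:reucrrence}--\ref{sec:first_moment}. The key point is that the observable $\calF_{\btau,\bsigma}$ restricted to \emph{non-overlapping} pairs of embeddings should satisfy a recurrence relation of essentially the same shape as~\eqref{eq:recurrence}, because when a new vertex $v_{n+1}$ is attached to $u_n$ in passing from $T_n$ to $T_{n+1}$, at most one of the two embeddings $\phi_1,\phi_2$ can have $u_n$ in its image (the non-overlap condition forbids both), so the combinatorial bookkeeping done in the proof of Proposition~\ref{prop:recurrence} can be carried out on one side at a time. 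The only new subtlety is the interaction between the two embeddings: a loose leaf of $\btau$ mapped to $v_{n+1}$ could in principle clash with $\phi_2(\sigma)$; but since we are summing over non-overlapping pairs, any such term is either forbidden or reduces to a strictly smaller observable $\calF_{\btau',\bsigma'}$ or $\calF_{\btau' + \bsigma'}$-type term. One thus obtains
\begin{align*}
	\mathbb{E}[\calF_{\btau,\bsigma}(T_{n+1})\,\vert\,\calE_n]
	\leq \Big(1 + \tfrac{w(\btau)+w(\bsigma)}{(1+\alpha)n - 2}\Big)\calF_{\btau,\bsigma}(T_n)
	+ \tfrac{1}{(1+\alpha)n-2}\sum \ct\cdot\calF_{\btau',\bsigma'}(T_n) + (\text{overlap terms}),
\end{align*}
where the sum is over pairs with $(\btau',\bsigma')$ strictly smaller for the order $\prec$ in at least one coordinate, and the overlap terms are controlled by Lemma~\ref{lem:tau+tau2}/Proposition~\ref{prop:tau+tau}. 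Note the total ``weight'' driving the polynomial exponent is $w(\btau) + w(\bsigma)$, which is exactly what makes $n^{\frac{w(\btau)}{1+\alpha}} \cdot n^{\frac{w(\bsigma)}{1+\alpha}}$ appear.

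Then, exactly as in the proof of Theorem~\ref{thm:first_moment}, I would multiply through by the integrating factor $\omega_{n+1}^{(\btau,\bsigma)} := \prod_{\ell=k}^{n}\big(1 + \tfrac{w(\btau)+w(\bsigma)}{(1+\alpha)\ell-2}\big)^{-1} \approx n^{-\frac{w(\btau)+w(\bsigma)}{1+\alpha}}$, sum over $n$, and bound the resulting sums $\mathcal{S}_n(\btau',\bsigma';\btau,\bsigma)$ using the inductive hypothesis on the smaller pairs plus the already-established bounds on $\mathbb{E}[\calF_{\btau}(T_n)]$, $\mathbb{E}[\calF_{\bsigma}(T_n)]$ and on the overlap terms. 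The base cases are pairs where one of the trees is among the three weight-$1$ trees (for which $\calF$ is an explicit linear function of $n$), or where both trees are single decorated vertices. The main obstacle, I expect, is verifying carefully that the recurrence for $\calF_{\btau,\bsigma}$ genuinely closes on strictly-smaller pairs plus overlap terms — in particular handling the case where the newly attached vertex creates a ``bridge'' that would merge the two images, which must be shown to fall entirely under the overlap terms already bounded by Proposition~\ref{prop:tau+tau}, and checking that no logarithmic factors sneak in because $|\ell| > 1+\alpha$ keeps us in the ``3rd case'' supercritical regime where $\gamma = 0$. The bookkeeping is heavier than in the first-moment case but structurally identical.
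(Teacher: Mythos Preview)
Your deduction of the corollary from Proposition~\ref{propupperboundsecmomembeddingsnooverlap} is exactly the paper's argument, and your sketch of the proof of Proposition~\ref{propupperboundsecmomembeddingsnooverlap} (recurrence with leading factor $1+\tfrac{w(\btau)+w(\bsigma)}{(1+\alpha)n-2}$, integrating factor $\omega_n^{(\btau,\bsigma)}$, induction on pairs for the partial order) matches the paper's structure.

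There is one difference worth noting. You anticipate ``overlap terms'' appearing in the recurrence (coming from a loose leaf creating a bridge between the two images) and plan to bound them via Proposition~\ref{prop:tau+tau}. The paper avoids this entirely: since $v_{n+1}$ is new and its unique neighbour $u_n$ lies in at most one of the two non-overlapping images, no overlap is ever created, and the recurrence closes on non-overlapping observables $\calF_{\boldsymbol{t},\boldsymbol{s}}$ alone. The price is that the analogue of Lemma~\ref{combinatoriallemma} becomes an \emph{inequality} (Lemma~\ref{combinatoriallemmasecmom}): when counting extensions of $\tilde\phi_1:\tau\setminus\tv\hookrightarrow T$ to $\phi_1:\tau\hookrightarrow T$ that stay disjoint from $\phi_2(\sigma)$, the number of admissible neighbours of $\tilde\phi_1(\tu)$ is known only up to $\pm 1$ (depending on whether $\phi_2(\sigma)$ occupies a neighbour of $\tilde\phi_1(\tu)$). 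This yields the recurrence as a clean inequality with no overlap remainder. Your route via overlap terms would likely also work, but is heavier than necessary. Finally, note that Proposition~\ref{propupperboundsecmomembeddingsnooverlap} is proved for \emph{all} pairs $(\btau,\bsigma)$, including those with $w<1+\alpha$ where logarithmic factors do appear in $\bbE[\calF_{\btau}(T_n)]$; the condition $|\ell|>1+\alpha$ and $\gamma=0$ is invoked only at the corollary level, not inside the induction.
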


\begin{proof}[Proof of Corollary~\ref{cor:tautau}]
	Apply Proposition~\ref{propupperboundsecmomembeddingsnooverlap} with $\bsigma=\btau$
	and observe that, due to Remark~\ref{rem:no_log} and the conditions on $\btau$, we have $\mathbb{E}[\calF_{\btau}(T_n)] \approx n^{\frac{|\ell|}{1+\alpha}}$. 
\end{proof}

Proposition~\ref{propupperboundsecmomembeddingsnooverlap} is obtained through a recurrence relation, 
similarly to how Theorem~\ref{thm:first_moment} follows from Proposition~\ref{prop:recurrence}. 
Since we only need an upper bound, we state only a (simpler) recursive inequality. 

\begin{proposition}\label{propinequalityformularecpropsecmomembeddingsnooverlap}
	There exists a family of nonnegative real numbers 
	$\{c^{*}(\boldsymbol{\theta'},\boldsymbol{\theta}) \ : \ \boldsymbol{\theta'} \prec 	\boldsymbol{\theta}\}$ 
	such that for any two decorated trees $\btau,\bsigma \neq \small{\textcircled{\tiny{0}}}$:
	\begin{align}\label{inequalityformularecpropsecmomembeddingsnooverlap}
    	\bbE[\calF_{\btau,\bsigma}(T_{n+1})) \vert \mathcal{F}_n] 
    	&\leq \Big(1+\frac{w(\btau)+w(\bsigma)}{(1+\alpha)\cdot n-2}\Big)\cdot \calF_{\btau,\bsigma}(T_{n}) \nonumber \\ 
    	&+ \frac{1}{(1+\alpha)\cdot n-2} \Big[
    	\sum_{\boldsymbol{t} \prec \btau} c^{*}(\boldsymbol{t},\btau) \cdot \calF_{\boldsymbol{t},\bsigma}(T_{n}) 
    	+ \sum_{\boldsymbol{s} \prec \bsigma} c^{*}(\boldsymbol{s},\bsigma) \cdot \calF_{\btau,\boldsymbol{s}}(T_{n})\Big].
	\end{align}
\end{proposition}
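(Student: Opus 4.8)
The plan is to mirror the proof of Proposition~\ref{prop:recurrence}, carrying the bookkeeping for a \emph{pair} of non-overlapping embeddings rather than a single embedding. Fix the seed $S$ and drop it from the notation, and fix $\btau=(\tau,\ell)$ and $\bsigma=(\sigma,m)$. Passing from $T_n$ to $T_{n+1}$ attaches a new vertex $v_n$ to a random $u_n \in T_n$. Given a pair $(\phi_1,\phi_2)$ of embeddings of $\tau,\sigma$ in $T_{n+1}$ with disjoint images contributing to $\calF_{\btau,\bsigma}(T_{n+1})$, the new vertex $v_n$ can lie in the image of at most one of the two embeddings (since the images are disjoint), say $\phi_1$; and as in the single-embedding proof, if $v_n\in\phi_1(\tau)$ then it is the image of a loose leaf of $\btau$ whose unique neighbour maps to $u_n$. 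So we split the sum over pairs $(\phi_1,\phi_2)$ into: (1) neither image meets $\{u_n,v_n\}$; (2) $\phi_1(\tau)$ contains $u_n$ but not $v_n$ (and symmetrically with $\phi_2$); (3) $\phi_1(\tau)$ contains both $u_n$ and $v_n$ (and symmetrically); one also must allow the case where $u_n$ is in one image and $v_n$ is nowhere (subsumed in (2) for whichever image contains $u_n$, or in (1) if $u_n$ is in neither image — note $v_n$ alone is impossible). Crucially, since the images are disjoint, $u_n$ lies in at most one of $\phi_1(\tau),\phi_2(\sigma)$, so these cases do not interact and we never generate a ``merger'' term.

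First I would handle categories (1) and (2) together, exactly as in~\eqref{eq:F12}: for the embeddings not creating $v_n$, restricting to $T_n$ gives a bijection onto pairs of disjoint embeddings in $T_n$, and the identity $[k]_\ell=[k-1]_\ell+\ell[k-1]_{\ell-1}$ together with $\bbP(\phi_i(\tu)=u_n\mid\calE_n)=\frac{\deg_{T_n}\phi_i(\tu)-1+\alpha}{(1+\alpha)n-2}$ produces, after the same rewriting $\deg-1+\alpha=[\deg-\ell(\tu)]+\ell(\tu)+\alpha-1$, a main term $\bigl(1+\tfrac{|\ell|+|m|}{(1+\alpha)n-2}\bigr)\calF_{\btau,\bsigma}(T_n)$ plus terms $\tfrac{1}{(1+\alpha)n-2}\calF_{\btau^{\tu-},\bsigma}(T_n)$ and $\tfrac{1}{(1+\alpha)n-2}\calF_{\btau,\bsigma^{\ts-}}(T_n)$ with nonnegative coefficients — here $\btau^{\tu-}\prec\btau$, $\bsigma^{\ts-}\prec\bsigma$, and the disjointness of images is preserved throughout since only the weights change, not the underlying embeddings. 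The one subtlety is that when $u_n$ is in one image, the \emph{other} embedding's image is still a fixed subtree of $T_n$ avoiding $u_n$; but we never need to know which, because $\calF_{\btau,\bsigma}$ already sums over all such configurations.

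Next, category (3): if $v_n$ is the image of a loose leaf $\tv$ of $\btau$ with neighbour $\tu$ mapped to $u_n$, then the restriction $\tilde\phi_1:\tau\setminus\tv\to T_n$, paired with $\phi_2$, is a disjoint pair in $T_n$, and by the identical computation leading to~\eqref{eq:F3} (using the two expansions of $(d-1+\alpha)[d-1]_{\ell(\tu)}$ and $(d-1+\alpha)[d-1]_{\ell(\tu)-1}$), this contributes, after applying Lemma~\ref{combinatoriallemma} to reduce the $(\btau\setminus\tv)^+$ term, a sum over loose leaves of $\btau$ of $\tfrac{1}{(1+\alpha)n-2}$ times $\calF_{\btau,\bsigma}(T_n)$, $(\deg_\tau(\tu)+\ell(\tu)+\alpha-2)\calF_{\btau\setminus\tv,\bsigma}(T_n)$ and $\ell(\tu)(\ell(\tu)+\alpha-1)\calF_{(\btau\setminus\tv)^-,\bsigma}(T_n)$, all with nonnegative coefficients (here I use $w(\btau)>1$, or more precisely that $\tu$ is not itself a loose leaf, exactly as in Proposition~\ref{prop:recurrence}; when $\btau=\small{\textcircled{\tiny{0}}}-\small{\textcircled{\tiny{0}}}$ or similar low-weight trees one checks the analogous identity directly, and the hypothesis $\btau,\bsigma\neq\small{\textcircled{\tiny{0}}}$ guarantees there is always a well-defined weight-raising/lowering structure — the single-vertex decoration-$0$ tree being the lone degenerate case that must be excluded). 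The symmetric contribution from $v_n\in\phi_2(\sigma)$ gives the analogous terms with the roles of $\btau$ and $\bsigma$ swapped. Collecting the main terms from categories (1)--(2) and the $\calF_{\btau,\bsigma}$ pieces from category (3), the coefficient of $\calF_{\btau,\bsigma}(T_n)$ is $1+\tfrac{|\ell|+|m|+\#\{\text{loose leaves of }\btau\}+\#\{\text{loose leaves of }\bsigma\}}{(1+\alpha)n-2}=1+\tfrac{w(\btau)+w(\bsigma)}{(1+\alpha)n-2}$, and every other term is of the form $\tfrac{1}{(1+\alpha)n-2}\calF_{\boldsymbol t,\bsigma}(T_n)$ with $\boldsymbol t\prec\btau$ or $\tfrac{1}{(1+\alpha)n-2}\calF_{\btau,\boldsymbol s}(T_n)$ with $\boldsymbol s\prec\bsigma$, with nonnegative coefficients $c^*(\boldsymbol t,\btau)$, $c^*(\boldsymbol s,\bsigma)$ independent of $n$ and $S$ (the reductions being via (i) lowering one decoration, (ii) removing a loose leaf, (iii) removing a loose leaf and lowering its neighbour's decoration, exactly as in Remark~\ref{rem:c>0}). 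Taking the conditional expectation and noting that we discarded only nonnegative quantities (there is no ``$+$'' merger term to worry about here, unlike in the $\calF_{\btau+\btau}$ analysis, precisely because the images are disjoint) yields the inequality~\eqref{inequalityformularecpropsecmomembeddingsnooverlap}.

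The main obstacle is not any single computation — each is a routine repeat of the corresponding step in Proposition~\ref{prop:recurrence} — but rather the careful case analysis ensuring that disjointness of $\phi_1(\tau)$ and $\phi_2(\sigma)$ is preserved under every operation and that $u_n$ (hence $v_n$) interacts with at most one embedding, so that no genuinely new ``joint'' observable is created and the right-hand side stays within the class $\{\calF_{\boldsymbol t,\bsigma}\}\cup\{\calF_{\btau,\boldsymbol s}\}$. A secondary point requiring care is the treatment of the degenerate low-weight trees (to justify excluding $\small{\textcircled{\tiny{0}}}$ and to see that $\tu$ is never a loose leaf so the factor $\deg_\tau(\tu)+\ell(\tu)+\alpha-2\geq 0$), but this is dispatched exactly as in the proof of Proposition~\ref{prop:recurrence}.
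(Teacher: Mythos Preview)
Your overall architecture is right and matches the paper's approach: split pairs of disjoint embeddings into the three categories, redo the computations of~\eqref{eq:F12} and~\eqref{eq:F3}, then reduce the $(\btau\setminus\tv)^+$ term. But there is a genuine gap at exactly that last step. You write ``applying Lemma~\ref{combinatoriallemma} to reduce the $(\btau\setminus\tv)^+$ term'', and you end up with the coefficient $\deg_\tau(\tu)+\ell(\tu)+\alpha-2$. Lemma~\ref{combinatoriallemma}, however, is a statement about the single observable $\calF_{(\btau\setminus\tv)^+}(T)$ and does \emph{not} transfer verbatim to $\calF_{(\btau\setminus\tv)^+,\bsigma}(T)$. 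The proof of Lemma~\ref{combinatoriallemma} counts, for each $\tilde\psi:\tau\setminus\tv\hookrightarrow T$, the extensions to $\psi:\tau\hookrightarrow T$; there are exactly $\deg_T(\tilde\psi(\tu))-\deg_{\tau\setminus\tv}(\tu)$ of them. In the paired setting you must count extensions of $\tilde\phi_1$ that \emph{remain disjoint from} $\phi_2(\sigma)$, and this number is either $\deg_T(\tilde\phi_1(\tu))-\deg_{\tau\setminus\tv}(\tu)$ or one less, according to whether $\phi_2(\sigma)$ occupies a neighbour of $\tilde\phi_1(\tu)$. Hence the equality~\eqref{equalitycombinatoriallemma} fails for the paired observable and must be replaced by a two-sided inequality; the paper states and proves this as a separate lemma (Lemma~\ref{combinatoriallemmasecmom}), whose upper bound has constant $C^*(\btau,\tv)=C(\btau,\tv)+1$.

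This is also where the inequality in~\eqref{inequalityformularecpropsecmomembeddingsnooverlap} actually comes from: it is the use of the upper bound in that paired lemma, not (as you suggest) some nonnegative term being discarded elsewhere. With your version of the reduction you would obtain an \emph{equality}, but with the wrong formula; and your coefficient $\deg_\tau(\tu)+\ell(\tu)+\alpha-2$ can vanish or require side arguments in low-weight cases, whereas the correct coefficient $\deg_\tau(\tu)+\ell(\tu)+\alpha-1\geq\alpha>0$ is automatically nonnegative. The fix is short: state and prove the paired analogue of Lemma~\ref{combinatoriallemma} (both bounds follow from the extension count above), then use only its upper bound.
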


In the rest of the section, we show how Proposition~\ref{propinequalityformularecpropsecmomembeddingsnooverlap}, 
implies Proposition~\ref{propupperboundsecmomembeddingsnooverlap}, 
then prove Proposition~\ref{propinequalityformularecpropsecmomembeddingsnooverlap}. 
Both proofs follow similar arguments to those in Sections~\ref{sec:first_moment} and~\ref{sec:reucrrence}, respectively.

\begin{proof}[Proof of Proposition~\ref{propupperboundsecmomembeddingsnooverlap}]
We are going to proceed by induction on pairs $(\btau,\bsigma)$ of decorated trees, for the partial order induced by $\prec$
on such pairs (precisely $(\boldsymbol{t},\boldsymbol{s}) \prec (\btau,\bsigma)$ if either $\boldsymbol{t} \preceq \btau$ and $\boldsymbol{s} \prec \bsigma$ or $\boldsymbol{t} \prec \btau$ and $\boldsymbol{s} \preceq \bsigma$).

\paragraph*{Base case}
We show~\eqref{asymptoticupperboundsecmomembeddingsnooverlap} when $\btau$ is any decorated tree and $\sigma=\small{\textcircled{\tiny{0}}}$. 
If $\phi_1$ is a graph embedding of $\tau$ in $T_n$, its image consists of $\lvert \tau \rvert$ vertices of $T_n$. 
Hence, the number of ways to embed $\sigma$ in $T_n$ without overlapping with $\phi_1(\tau)$ is $n-\lvert \tau \rvert$.
Thus 
\begin{align*}
    \bbE[\calF_{\btau,\small{\textcircled{\tiny{0}}}}(T_n)] 
    = \bbE[\calF_{\btau}(T_n)]  \cdot (n-\lvert \tau \rvert) 
    = \mathcal{O}\big(\bbE[\calF_{\btau}(T_n)] \cdot \bbE[\calF_{\small{\textcircled{\tiny{0}}}}(T_n)]\big),
\end{align*}
as required.

\paragraph*{Induction step}
Let $\btau,\bsigma$ be two decorated trees, both different from $\small{\textcircled{\tiny{0}}}$. 
Assume that~\eqref{asymptoticupperboundsecmomembeddingsnooverlap} holds for all pairs 
$(\boldsymbol{t},\boldsymbol{s})$ with either 
$\boldsymbol{t} \preceq \btau$ and $\boldsymbol{s} \prec \bsigma$ 
or $\boldsymbol{t} \prec \btau$ and $\boldsymbol{s} \preceq \bsigma$. 
In the following, we set:
\begin{align*}
	\omega_{n}^{(\btau,\bsigma)}:=\prod_{\ell=k}^{n} \Big(1+\frac{w(\btau)+w(\bsigma)}{(1+\alpha)\cdot \ell-2}\Big)^{-1}
\end{align*}
and
\begin{align*}
    \calS_{n}(\boldsymbol{t},\btau;\bsigma)
    := \sum_{\ell=k}^{n} \frac{1}{(1+\alpha)\ell-2} \cdot \omega_{\ell+1}^{(\btau,\bsigma)} \cdot \bbE[\calF_{\boldsymbol{t},\bsigma}(T_{\ell})]
\end{align*}
for any decorated tree $\boldsymbol{t}\prec\btau$. Iterating~\eqref{inequalityformularecpropsecmomembeddingsnooverlap}, we obtain:
\begin{align*}
	\bbE[\omega_{n+1}^{(\btau,\bsigma)} \cdot \calF_{\btau,\bsigma}(T_{n+1})] 
	&\leq \ \calF_{\btau,\bsigma}(S) 
	+ \sum_{\boldsymbol{t} \prec \btau} c^{*}(\boldsymbol{t},\btau) \cdot \calS_{n}(\boldsymbol{t},\btau;\bsigma) 
	+ \sum_{\boldsymbol{s} \prec \bsigma} c^{*}(\boldsymbol{s},\bsigma) \cdot \calS_{n}(\boldsymbol{s},\bsigma;\btau).
\end{align*}
Thus, to prove~\eqref{asymptoticupperboundsecmomembeddingsnooverlap}, it suffices to show that 
$\calS_{n}(\boldsymbol{t},\btau;\bsigma) = \mathcal{O}\big( \omega_{n}^{(\btau,\bsigma)} \cdot 
\bbE[\calF_{\btau}(T_n)] \cdot \bbE[\calF_{\bsigma}(T_n)] \big)$ for all $\boldsymbol{t} \prec \btau$ 
(by symmetry, the same will also hold for  $\calS_{n}(\boldsymbol{s},\bsigma;\btau)$ with $\boldsymbol{s} \prec \bsigma$).
Recall from Theorem~\ref{thm:first_moment} the asymptotic
\begin{align}\label{eq:omega_tau_sigma}
    \omega_{n}^{(\btau,\bsigma)} \cdot \bbE[\calF_{\btau}(T_n)] \cdot \bbE[\calF_{\bsigma}(T_n)] \big) 
    \approx  n^{\max\big\{0,1-\frac{w(\btau)}{1+\alpha}\big\}+\max\big\{0,1-\frac{w(\bsigma)}{1+\alpha}\big\}} 
    \cdot (\log n)^{\gamma(\btau)+\gamma(\bsigma)}.
\end{align}

Fix $\boldsymbol{t} \prec \btau$. According to the induction hypothesis and the above, 
the terms of $\calS_{n}(\boldsymbol{t},\btau;\bsigma)$ are bounded as 
\begin{align}\label{eq:S2}
	\frac{\omega_{n+1}^{(\btau,\bsigma)}}{(1+\alpha)n-2} \cdot \bbE[\calF_{\boldsymbol{t},\bsigma}(T_{n})] 
	= \mathcal{O}\Big( 
	n^{\max\big\{\frac{w(\boldsymbol{t})-w(\btau)}{1+\alpha},1-\frac{w(\btau)}{1+\alpha}\big\}+\max\big\{0,1-\frac{w(\bsigma)}{1+\alpha}\big\}-1} 
	 (\log n)^{\gamma(\boldsymbol{t})+\gamma(\bsigma)}\Big).
\end{align}
The sum of the above has different asymptotics depending on the value of the exponent of $n$:
\bigskip

\noindent\boxed{\textbf{1st case : $\max\big\{\tfrac{w(\boldsymbol{t})-w(\btau)}{1+\alpha},1-\tfrac{w(\btau)}{1+\alpha}\big\}+\max\big\{0,1-\tfrac{w(\bsigma)}{1+\alpha}\big\} < 0$}} \medskip \\
Then the sum of~\eqref{eq:S2} converges, thus 
$\calS_{n}(\boldsymbol{t},\btau;\bsigma) = \calO(1) = \mathcal{O}\big( \omega_{n}^{(\btau,\bsigma)} \cdot \bbE[\calF_{\btau}(T_n)] \cdot \bbE[\calF_{\bsigma}(T_n)] \big)$.
\bigskip 

\noindent\boxed{\textbf{2nd case : $\max\big\{\tfrac{w(\boldsymbol{t})-w(\btau)}{1+\alpha},1-\tfrac{w(\btau)}{1+\alpha}\big\}+\max\big\{0,1-\tfrac{w(\bsigma)}{1+\alpha}\big\} >0$}} \medskip \\
Then the sum of~\eqref{eq:S2} diverges, and a standard estimate provides the precise rate of growth:
\begin{align*}
\calS_{n}(\boldsymbol{t},\btau;\bsigma) 
= 
\mathcal{O}\Big( 
	n^{\max\big\{\frac{w(\boldsymbol{t})-w(\btau)}{1+\alpha},1-\frac{w(\btau)}{1+\alpha}\big\}+\max\big\{0,1-\frac{w(\bsigma)}{1+\alpha}\big\}} 
	 (\log n)^{\gamma(\boldsymbol{t})+\gamma(\bsigma)}\Big).
\end{align*}
Compare the above to~\eqref{eq:omega_tau_sigma} to find 
\begin{align*}
    \frac{\calS_{n}(\boldsymbol{t},\btau;\bsigma) }
    {\omega_{n}^{(\btau,\bsigma)}  \bbE[\calF_{\btau}(T_n)]  \bbE[\calF_{\bsigma}(T_n)]}
    =\mathcal{O}\Big(
     n^{\max\big\{\tfrac{w(\boldsymbol{t})-w(\btau)}{1+\alpha},1-\tfrac{w(\btau)}{1+\alpha}\big\}-\max\big\{0,1-\frac{w(\btau)}{1+\alpha}\big\}} 
    \cdot (\log n)^{\gamma(\boldsymbol{t}) - \gamma(\btau)}\Big).
\end{align*} 
Now, recall that 
\begin{tight_itemize}
\item $w(\boldsymbol{t}) \leq w(\btau)$ always;
\item if $w(\boldsymbol{t}) \leq  w(\btau)<1+\alpha$, then $\gamma(\boldsymbol{t}) = \gamma(\btau)= 0$; 
\item if $w(\boldsymbol{t}) =  w(\btau) \geq 1+\alpha$, then $\gamma(\boldsymbol{t})  < \gamma(\btau)$.
\end{tight_itemize} 
A separate analysis of the three different situations above shows that 
$\calS_{n}(\boldsymbol{t},\btau;\bsigma)= \mathcal{O}\big( \omega_{n}^{(\btau,\bsigma)} \cdot \bbE[\calF_{\btau}(T_n)] \cdot \bbE[\calF_{\bsigma}(T_n)] \big)$.
\bigskip 

\noindent\boxed{\textbf{3rd case : $\max\big\{\tfrac{w(\boldsymbol{t})-w(\btau)}{1+\alpha},1-\tfrac{w(\btau)}{1+\alpha}\big\}+\max\big\{0,1-\tfrac{w(\bsigma)}{1+\alpha}\big\} =0$}} \medskip \\
Then the sum of~\eqref{eq:S2} diverges, and a standard estimate provides the precise rate of growth:
\begin{align*}
    \calS_{n}(\boldsymbol{t},\btau;\bsigma) 
    = \mathcal{O}\Big( (\log n)^{\gamma(\boldsymbol{t})+\gamma(\bsigma)+1}\Big).
\end{align*}
By~\eqref{eq:omega_tau_sigma} and our assumption, we find 
\begin{align}\label{eq:S3}
    \frac{\calS_{n}(\boldsymbol{t},\btau;\bsigma) }
    {\omega_{n}^{(\btau,\bsigma)}  \bbE[\calF_{\btau}(T_n)]  \bbE[\calF_{\bsigma}(T_n)]}
    =\mathcal{O}\Big(
     n^{\max\big\{\tfrac{w(\boldsymbol{t})-w(\btau)}{1+\alpha},1-\tfrac{w(\btau)}{1+\alpha}\big\}-\max\big\{0,1-\frac{w(\btau)}{1+\alpha}\big\}} 
     (\log n)^{\gamma(\boldsymbol{t}) - \gamma(\btau)+1}\Big).
\end{align}
The power of $n$ in the right-hand side above is negative or null. 
When it is negative, equation~\eqref{eq:S3} is bounded, as required. 
It can only be $0$ in two cases: 
when $w(\btau) < 1+\alpha$ or when $w(\boldsymbol{t}) = w(\btau) \geq 1 + \alpha$. 
The former is incoherent with the assumption of this $3$rd case;
when the latter occurs, $\gamma(\boldsymbol{t}) \leq \gamma(\btau)-1$, hence~\eqref{eq:S3} is bounded. 
In conclusion,~\eqref{eq:S3} is always bounded, which is to say that 
$\calS_{n}(\boldsymbol{t},\btau;\bsigma)= \mathcal{O}\big( \omega_{n}^{(\btau,\bsigma)} \cdot \bbE[\calF_{\btau}(T_n)] \cdot \bbE[\calF_{\bsigma}(T_n)] \big)$.
\end{proof}

It remains to prove Proposition~\ref{propinequalityformularecpropsecmomembeddingsnooverlap}:

\begin{proof}[Proof of Proposition~\ref{propinequalityformularecpropsecmomembeddingsnooverlap}]
The strategy followed here is the same as for Proposition~\ref{prop:recurrence}. 
We start by fixing $\btau=(\tau,\ell_\tau)$ and $\bsigma=(\sigma,\ell_\sigma)$ two decorated trees, both different from $\small{\textcircled{\tiny{0}}}$. 
Recall our notation: in passing from $T_n$ to $T_{n+1}$ a new vertex denoted by $v_n$ is attached to a randomly chosen vertex $u_n$ of $T_n$. 
Our purpose is to compute the sum~\eqref{defobservablesecmomembeddingsnooverlap} over pairs of non-overlapping graph embeddings $\phi_1$, $\phi_2$ of $\tau$ and $\sigma$ into $T_{n+1}$.
We restrict the sum to embeddings with $\pi(\btau,\phi_1,T_{n+1}) > 0$ and $\pi(\bsigma,\phi_2,T_{n+1}) > 0$. 

As it has already been noted in the proof of Proposition~\ref{prop:recurrence}, such embeddings fall into three distinct categories: 
those which do not include $u_n$ nor $v_n$ in their image, 
those which include $u_n$ but not $v_n$ in their image 
and those which include both $u_n$ and $v_n$ in their image.
Embeddings that include $v_n$ but not $u_n$ in their image are not authorised 
as they only have positive weight when the decorated tree is $\small{\textcircled{\tiny{0}}}$.

A crucial remark is that two graph embeddings $\phi_1 : \tau \to T_{n+1}$, $\phi_2 : \sigma \to T_{n+1}$ with no overlap cannot be at once in one of the two last categories of embeddings. 
We thus enumerate three kind of situations: 

\begin{enumerate}
\item the images of $\phi_1$ and $\phi_2$ are both in $T_{n+1}  \setminus \{u_n,v_n\}$;
\item the image of $\phi_1$ contains $u_n$ but not $v_n$, that of $\phi_2$ does not include any of them; or the same situation but reversing the roles of $\phi_1$ and $\phi_2$;
\item the image of $\phi_1$ contains $u_n$ and $v_n$, that of $\phi_2$ does not include any of them; or the same situation but reversing the roles of $\phi_1$ and $\phi_2$.
\end{enumerate}
Write $\calF^{(i)}_{\btau,\bsigma}(T_{n+1})$ with $i=1,2,3$ for the contribution to~\eqref{defobservablesecmomembeddingsnooverlap} of pairs of embeddings being in the corresponding situation above. 
As in the proof of Proposition~\ref{prop:recurrence}, the pairs of embeddings in the two first situations are in one-to-one correspondance with non-overlapping embeddings $\phi_1 : \tau \to T_n, \ \phi_2 : \sigma \to T_n$, 
although their weights $\pi(\btau,\phi_1,T_{n+1})$ and $\pi(\bsigma,\phi_2,T_{n+1})$ may differ from that in $T_n$. 
The same algebraic manipulations used for $\calF^{(1)}_{\btau}(T_{n+1})$ and $\calF^{(2)}_{\btau}(T_{n+1})$ in the proof of Proposition~\ref{prop:recurrence} may also be applied here to find
\begin{align}\label{ineqproofrecurrencesecmom}
	& \bbE \big[ \calF^{(1)}_{\btau,\bsigma}(T_{n+1}) +\calF^{(2)}_{\btau,\bsigma}(T_{n+1})\, \big|\, \calE_n\big]  \\
	&=	\Big[1 + \tfrac{|\ell_{\tau}|+|\ell_{\sigma}|}{(1+\alpha)n-2}\Big] \calF_{\btau,\bsigma}(T_{n}) +
		 \sum_{\tu \in \tau} \tfrac{\ell_{\tau}(\tu)(\ell_{\btau}(\tu) +\alpha -1)}{(1+\alpha)n-2}\cdot  \calF_{\btau^{\tu-},\bsigma}(T_n) +  \sum_{\tv \in \sigma} \tfrac{\ell_{\bsigma}(\tv)(\ell_{\bsigma}(\tv) +\alpha -1)}{(1+\alpha)n-2}\cdot  \calF_{\btau,\bsigma^{\tv-}}(T_n)\nonumber
\end{align}
Moreover,  the quantity $\calF^{(3)}_{\btau,\bsigma}(T_{n+1})$ may be treated as $\calF^{(3)}_{\btau}(T_{n+1})$ in~\eqref{eq:F3}, and we find
\begin{align}\label{ineqproofrecurrencesecmom2}
	    &\bbE\Big[ \calF^{(3)}_{\btau,\bsigma}(T_{n+1}) \,\Big|\, \calE_n\Big]  \\
		&=
		\tfrac{1}{(1+\alpha)n-2} \cdot \bigg(
		\sum_{\tv \text{ l.l of} \ \btau} 
		\calF_{(\btau \setminus \tv)^+,\bsigma}(T_n) +
		(2\ell_{\tau}(\tu) +\alpha) \cdot \calF_{\btau \setminus \tv,\bsigma}(T_n) 
		 + \ell_{\tau}(\tu) (\ell_{\tau}(\tu) +\alpha -1) \cdot \calF_{(\btau \setminus \tv)^-,\bsigma}(T_n)  \nonumber \\ &+
		\sum_{\tv \text{ l.l of} \ \bsigma} 
		\calF_{\btau,(\bsigma \setminus \tv)^+}(T_n) +
		(2\ell_{\sigma}(\tu) +\alpha) \cdot \calF_{\btau,\bsigma \setminus \tv}(T_n) 
		 + \ell_{\sigma}(\tu) (\ell_{\sigma}(\tu) +\alpha -1) \cdot \calF_{\btau,(\bsigma \setminus \tv)^-}(T_n) \bigg)\nonumber
\end{align}
where the two sums are over the set of loose leaves of $\btau$ and $\bsigma$ respectively. The above equation is a derivative of~\eqref{eq:F3}. At the same stage, the last ingredient for the proof of Proposition~\ref{prop:recurrence} was a combinatorial lemma - Lemma~\ref{combinatoriallemma} - claiming that $\calF_{(\btau \setminus \tv)^+}(T_n)$ is actually a linear combination of $\calF_{\btau}(T_{n})$ and $\calF_{\btau \setminus \tv}(T_n)$ when $\tv$ is a loose leaf of $\btau$. Here that is almost what we do for $\calF_{(\btau \setminus \tv)^+,\bsigma}(T_n)$ and of course for $\calF_{\btau,(\bsigma \setminus \tv)^+}(T_n)$ by symmetry. The equality~\eqref{equalitycombinatoriallemma} is just turned into an inequality: 

\begin{lemma}\label{combinatoriallemmasecmom}
    Let $\btau=(\tau,\ell_{\tau})$ and  $\bsigma=(\tau,\ell_{\sigma})$ be two decorated trees, $\tv$ a \textit{loose leave} of $\btau$ and $\tu$ its only neighbour in $\tau$. Then, for any tree $T$:
    \begin{align*}
    \calF_{\btau,\bsigma}(T) + C(\btau,w) \cdot \calF_{\btau \setminus \tv,\bsigma}(T) 
    \ \leq \ \calF_{(\btau \setminus \tv)^+,\bsigma}(T) 
    \ \leq \ \calF_{\btau,\bsigma}(T) + C^{*}(\btau,w) \cdot \calF_{\btau \setminus \tv,\bsigma}(T),
    \end{align*}
    with $C(\btau,w)$ being the constant defined in Lemma~\ref{combinatoriallemma} and $C^{*}(\btau,w):=C(\btau,w)+1$.
\end{lemma}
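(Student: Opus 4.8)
The plan is to replay the two-step argument from the proof of Lemma~\ref{combinatoriallemma}, but carrying a second, non-overlapping embedding along at every stage. The underlying tree of $\bsigma$ plays no role, so write $\bsigma=(\sigma,\ell_\sigma)$ and let $\phi_2$ range over graph embeddings $\sigma\hookrightarrow T$; write $\tilde\psi$ for the restriction to $\tau\setminus\tv$ of an embedding $\psi:\tau\hookrightarrow T$. The first thing I would establish is the extension bijection: since $\tv$ is a loose leaf (so $\ell(\tv)=0$), a pair $(\psi,\phi_2)$ counted by $\calF_{\btau,\bsigma}(T)$ is the same datum as a pair $(\tilde\psi,\phi_2)$ with $\tilde\psi(\tau\setminus\tv)\cap\phi_2(\sigma)=\emptyset$ together with a choice of a vertex $x$ of $T$ adjacent to $\tilde\psi(\tu)$ and lying in neither $\tilde\psi(\tau\setminus\tv)$ nor $\phi_2(\sigma)$ (namely $x=\psi(\tv)$), and that under this correspondence $\pi(\btau,\psi,T)=\pi(\btau\setminus\tv,\tilde\psi,T)$. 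Denote by $N(\tilde\psi,\phi_2)$ the number of admissible $x$.

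Next I would record the two rewritings, both sums over pairs $(\tilde\psi,\phi_2)$ with disjoint images: $\calF_{\btau,\bsigma}(T)=\sum N(\tilde\psi,\phi_2)\,\pi(\btau\setminus\tv,\tilde\psi,T)\,\pi(\bsigma,\phi_2,T)$ by the bijection above, and $\calF_{(\btau\setminus\tv)^{+},\bsigma}(T)=\sum\big(\deg_{T}(\tilde\psi(\tu))-1-\ell(\tu)\big)\,\pi(\btau\setminus\tv,\tilde\psi,T)\,\pi(\bsigma,\phi_2,T)$, because raising the decoration of $\tu$ by one multiplies the weight by the factor $\deg_{T}(\tilde\psi(\tu))-1-\ell(\tu)$ (an identity that remains valid, both sides vanishing, when $\pi(\btau\setminus\tv,\tilde\psi,T)=0$). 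Also $\calF_{\btau\setminus\tv,\bsigma}(T)=\sum\pi(\btau\setminus\tv,\tilde\psi,T)\,\pi(\bsigma,\phi_2,T)$ by definition. It remains to compare the three coefficient sequences pointwise in $(\tilde\psi,\phi_2)$.

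Here I would use two elementary tree facts. First, $\tilde\psi(\tau\setminus\tv)$ is a connected subgraph of the tree $T$ carrying exactly $|\tau\setminus\tv|-1$ edges, hence is an \emph{induced} subtree of $T$; consequently exactly $\deg_{\tau\setminus\tv}(\tu)=\deg_\tau(\tu)-1$ of the $T$-neighbours of $\tilde\psi(\tu)$ lie in $\tilde\psi(\tau\setminus\tv)$. Second, $\phi_2(\sigma)$ is a subtree of $T$ not containing $\tilde\psi(\tu)$, so $\tilde\psi(\tu)$ has \emph{at most one} neighbour in $\phi_2(\sigma)$; call $m(\tilde\psi,\phi_2)\in\{0,1\}$ that number (these neighbours lie outside $\tilde\psi(\tau\setminus\tv)$ by disjointness of the images). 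Splitting the $\deg_{T}(\tilde\psi(\tu))$ neighbours of $\tilde\psi(\tu)$ into those in $\tilde\psi(\tau\setminus\tv)$, those in $\phi_2(\sigma)$, and the rest, gives
\begin{align*}
\deg_{T}(\tilde\psi(\tu))-1-\ell(\tu)=N(\tilde\psi,\phi_2)+\big(\deg_\tau(\tu)-\ell(\tu)-2\big)+m(\tilde\psi,\phi_2)=N(\tilde\psi,\phi_2)+C(\btau,w)+m(\tilde\psi,\phi_2),
\end{align*}
with $m(\tilde\psi,\phi_2)\in\{0,1\}$, so that $N(\tilde\psi,\phi_2)+C(\btau,w)\le \deg_{T}(\tilde\psi(\tu))-1-\ell(\tu)\le N(\tilde\psi,\phi_2)+C^{*}(\btau,w)$ for each admissible pair. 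Multiplying by the nonnegative weights $\pi(\btau\setminus\tv,\tilde\psi,T)\,\pi(\bsigma,\phi_2,T)$, summing over all pairs with disjoint images, and inserting the three rewritings above yields exactly the two inequalities of Lemma~\ref{combinatoriallemmasecmom}.

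I expect no real obstacle here: the computation is bookkeeping of which neighbours of $\tilde\psi(\tu)$ are available for the image of $\tv$. The only non-routine input is the observation that a vertex of a tree lying outside a subtree has at most one neighbour in it, and it is precisely this "$0$ or $1$'' dichotomy that turns the exact identity of Lemma~\ref{combinatoriallemma} into a two-sided estimate and fixes the gap between the bounds at $C^{*}(\btau,w)-C(\btau,w)=1$. The small points to check are the degenerate case where $\tau\setminus\tv$ reduces to the single vertex $\tu$ (then $\deg_{\tau\setminus\tv}(\tu)=0$ and all formulas still hold) and the harmless vanishing of $\pi(\btau\setminus\tv,\tilde\psi,T)$, which simply removes the corresponding terms.
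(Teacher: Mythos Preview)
Your proof is correct and follows essentially the same approach as the paper: reduce to pairs of non-overlapping embeddings of $\tau\setminus\tv$ and $\sigma$, count how many neighbours of $\tilde\psi(\tu)$ are available for the image of $\tv$, and observe that $\phi_2(\sigma)$ can block at most one such neighbour. Your presentation is slightly cleaner in that you write the exact identity $\deg_T(\tilde\psi(\tu))-1-\ell(\tu)=N(\tilde\psi,\phi_2)+C(\btau,w)+m(\tilde\psi,\phi_2)$ with $m\in\{0,1\}$ before passing to inequalities, whereas the paper bounds $\calF_{\btau,\bsigma}(T)$ above and below separately; but the content is the same.
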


\begin{proof}
	As for Lemma~\ref{combinatoriallemma}, we proceed in two steps. 
	First, we express $\calF_{\btau,\bsigma}(T)$ in terms of pairs of non-overlapping embeddings of $\tau \setminus \tv$ and $\sigma$, respectively, in $T$.  
	
	To any non-overlapping pair of embeddings $\phi_1 : \tau \to T$, $\phi_2 : \sigma \to T$
	associate the restriction $\tilde\phi_1 : \btau \setminus\tv \to T$ of $\phi_1$ to the set of vertices of $\btau \setminus \tv$ 
	together with the same embedding $\phi_2$ of $\sigma$. 
	Conversely, if a pair of non-overlapping embeddings $\tilde\psi_1 : \tau \setminus\tv \to T$ and $\phi_2:\sigma\to T$ is given,  
	we may extend $\tilde \psi_1$ to some $\psi_1 : \tau \to T$ which preserves the non-overlapping property with $\phi_2$. 
	The number of such extensions depends on how many neighbours of $\tilde\psi_1(u)$ 
	are not contained in the images of $\tilde\psi_1$ and $\phi_2$. 
	The former occupies $\mathrm{deg}_{\tau \setminus\tv}(u)$ neighbours of $\tilde\psi_1(u)$; the latter may occupy $0$ or $1$ neighbour due to the non-overlapping requirement. 
	From this observation, and given that $\pi(\btau,\phi_1, T)=\pi(\btau\setminus \tv, \tilde\phi_1, T)$ ($\tv$ being a loose leaf), 
	we deduce that:
	\begin{align}\label{combinatorialequalityonelemmasecmom}
		\calF_{\btau,\bsigma}(T) 
		\leq \underset{\tilde\phi_1(\tau \setminus \tv)\cap\phi_2(\sigma)=\emptyset}{\sum_{\tilde\phi_1,\phi_2}} \,  	
		\Big( \mathrm{deg}_{T}(\tilde\phi_1(u))-\mathrm{deg}_{\tau \setminus \tv}(u) \Big) 
		\cdot \pi(\btau \setminus \tv,\tilde\phi_1, T) \cdot \pi(\bsigma,\phi_2, T)
\end{align}
	and 
	\begin{align}\label{combinatorialequalityonetwolemmasecmom}
		\calF_{\btau,\bsigma}(T) 
		\geq \underset{\tilde\phi_1(\tau \setminus \tv)\cap\phi_2(\sigma)=\emptyset}{\sum_{\tilde\phi_1,\phi_2}} \,  
		\Big( \mathrm{deg}_{T}(\tilde\phi_1(u))-\mathrm{deg}_{\tau \setminus \tv}(u) -1 \Big) 
		\cdot \pi(\btau \setminus \tv,\tilde\phi_1, T) \cdot \pi(\bsigma,\phi_2, T).
	\end{align}
The two above equations correspond to~\eqref{combinatorialequalityonelemma} in the proof of Lemma~\ref{combinatoriallemma}. Finally, similarly to~\eqref{combinatorialequalitytwolemma}, we find 
\begin{align}\label{combinatorialequalitytwolemmasecmom}
\calF_{(\btau \setminus \tv)^+,\bsigma}(T)  
	= \underset{\tilde\phi_1(\tau \setminus \tv)\cap\phi_2(\sigma)=\emptyset}{\sum_{\tilde\phi_1,\phi_2}} \, 
	\Big(  \mathrm{deg}_{T}(\tilde\phi_1(u))-1-\ell_{\tau}(u) \Big) \cdot \pi(\btau \setminus \tv,\tilde\phi_1, T) 
	\cdot \pi(\bsigma,\phi_2, T).
\end{align}
Subtracting~\eqref{combinatorialequalityonelemmasecmom} and~\eqref{combinatorialequalityonetwolemmasecmom} from~\eqref{combinatorialequalitytwolemmasecmom}, we obtain the desired lower and upper bounds, respectively.
\end{proof}

To finish the proof of Proposition~\ref{propinequalityformularecpropsecmomembeddingsnooverlap}, insert
the upper bound of Lemma~\ref{combinatoriallemmasecmom} in~\eqref{ineqproofrecurrencesecmom2} to find 
\begin{align}\label{finalineqproofrecurrenceformulasecmom}
		&\bbE\Big[ \calF^{(3)}_{\btau,\bsigma}(T_{n+1}) \,\Big|\, \calE_n\Big]
		\leq
		\tfrac{\lvert \{\tv \ \text{loose leaf of} \ \btau\}\rvert + \lvert \{\tv \ \text{loose leaf of} \ \sigma\}\rvert}{(1+\alpha)n-2}
		\cdot
		\calF_{\btau,\bsigma}(T_n)  \\ 
		&+ \tfrac{1}{(1+\alpha)n-2} \sum_{\tv \text{ l.l. of } \btau} 
		(\mathrm{deg}_{\tau}(u)+\ell_{\tau}(u)+\alpha-1) \cdot \calF_{\btau \setminus \tv,\bsigma}(T_n) 
		 + \ell_{\tau}(\tu) (\ell_{\tau}(\tu) +\alpha -1) \cdot \calF_{(\btau \setminus \tv)^-,\bsigma}(T_n) \nonumber \\ 
		 &+\tfrac{1}{(1+\alpha)n-2} \sum_{\tv \text{ l.l. of } \bsigma} 
		(\mathrm{deg}_{\sigma}(u)+\ell_{\sigma}(u)+\alpha-1) \cdot \calF_{\btau, \bsigma \setminus \tv}(T_n) 
		 + \ell_{\sigma}(\tu) (\ell_{\sigma}(\tu) +\alpha -1) \cdot \calF_{\btau, (\bsigma \setminus \tv)^-}(T_n)\nonumber
	\end{align}
where the two sums are again over the set of loose leaves of $\btau$ and $\bsigma$ respectively. 
Equations~\eqref{ineqproofrecurrencesecmom} and~\eqref{finalineqproofrecurrenceformulasecmom} eventually lead to the expected inequality~\eqref{inequalityformularecpropsecmomembeddingsnooverlap}.
\end{proof}

\end{subsubsection}

\begin{subsubsection}{Proof of Theorem~\ref{thm:second_moment}}

Theorem~\ref{thm:second_moment} follows directly from the estimates of the two previous sections and from the decomposition 
\begin{align}\label{eq:F2}
	\calF_{\btau}(T_n)^2 = 
    \calF_{\btau,\btau}(T_n)
    +\calF_{\btau+\btau}(T_n).
\end{align}
 
\begin{proof}[Proof of Theorem~\ref{thm:second_moment}]
	Take the expectation of~\eqref{eq:F2} and insert the bounds~\eqref{eq:tau+tau} and~\eqref{eq:tautau}.
\end{proof}

\end{subsubsection}

\section{Observables and their difference around the seed tree}

\label{observablesandtheirdifferencearoundthegraphseed}

The goal of this section is to produce decorated trees $\btau$ that can distinguish between two different seeds $S$ and $S'$. 
Moreover, we wish that the asymptotic of $\calF_{\btau}(T_n^S)$ for such trees (both for the first and the square root of the second moment) 
be of the type $n^{\frac{w(\btau)}{1 + \alpha}}$, with a null logarithmic correction. 
The main result is the following. 

\begin{theorem}\label{thm:difference_from_seed}
	For any two seed trees $S \neq S'$ of size $k,k'\geq 3$, 
	there exists a decorated tree $\btau$ with $w(\btau) > 1 + \alpha$ and $\ell(u) \geq 2$ for all $u \in \tau$ such that 
	\begin{align*}
		\bbE[\mathcal{F}_{\btau}(T_{n}^{S})]-\bbE[\mathcal{F}_{\btau}(T_{n}^{S'})] 
		\approx n^{\frac{w(\btau)}{1+	\alpha}}.
	\end{align*}
\end{theorem}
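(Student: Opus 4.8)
The plan is to compute $\bbE[\calF_{\btau}(T_n^S)]$ precisely enough — not merely up to the constant of $\approx$, but isolating the leading coefficient — so that the seed dependence of that coefficient becomes visible, and then to choose $\btau$ so that the two coefficients differ. The key point, exploited in all previous works and available here through Section~\ref{sec:planar_PA}, is that for two seeds $S,S'$ of the \emph{same} size the trees $T_n^S$ and $T_n^{S'}$ admit a coupling in which only the subtrees planted directly in the corners of the seed feel the difference: the P\'olya urn governing the sizes $(x_n^{v,i})$ and the planted plane trees grafted in are identical. Hence the difference $\bbE[\calF_{\btau}(T_n^S)]-\bbE[\calF_{\btau}(T_n^{S'})]$ comes only from embeddings $\phi:\tau\hookrightarrow T_n$ whose image meets the seed, or more precisely from embeddings ``near'' the seed, and these are exactly the ones whose contribution is of order $n^{|\ell|/(1+\alpha)}$ (the ``first type'' of embeddings discussed after \eqref{eq:first_moment1}). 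The embeddings living in the freshly-grown part of the tree contribute identically to both and cancel in the difference.

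Concretely, I would first reduce to the case $k=k'$. If $|S|\ne|S'|$ one can either grow the smaller seed for a few deterministic steps and absorb this into the analysis, or — more robustly — observe that a difference in size already forces a difference in, e.g., the total number of vertices at a fixed generation, and handle this with a simpler observable; but the substantive case is $k=k'$, where the coupling of Proposition~\ref{fundamentaldecompositionplantedplanartrees} applies verbatim. Next, using the recurrence \eqref{eq:recurrence} together with the first-moment asymptotics of Theorem~\ref{thm:first_moment}, I would show that for a decorated tree $\btau$ with all $\ell(u)\ge 2$ and $|\ell|>1+\alpha$ (so $w(\btau)=|\ell|$ and, by Remark~\ref{rem:no_log}, $\gamma(\btau)=0$), one has
\begin{align*}
\bbE[\calF_{\btau}(T_n^S)] = c_{\btau}(S)\cdot n^{\frac{|\ell|}{1+\alpha}}\,(1+o(1)),
\end{align*}
where $c_{\btau}(S)$ is an explicit seed-dependent constant. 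The mechanism here: iterating \eqref{recurrenceformulagoodform}, the leading term of $\bbE[\omega_{n+1}^{(\btau)}\calF_{\btau}(T_{n+1})]$ is $\calF_{\btau}(S)$ plus a convergent series whose sum depends on $S$ only through the trajectory of the process near the seed; alternatively, and perhaps more transparently, decompose $\calF_{\btau}(T_n^S)$ according to the planted-plane-subtree structure and read off that the leading contribution is a sum over ways of embedding $\tau$ with at least one vertex mapped into $V_S$, weighted by powers of the urn variables $x_n^{v,i}\approx n^{1/(1+\alpha)}$ times the martingale limits of the P\'olya urn.

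The heart of the matter is then the \emph{non-vanishing of the difference} $c_{\btau}(S)-c_{\btau}(S')$ for a well-chosen $\btau$. Here I expect the main obstacle. One needs to exhibit, for \emph{any} pair of non-isomorphic seeds $S\not\cong S'$ with $k,k'\ge 3$, at least one decorated tree $\btau$ (with $\ell\ge 2$ everywhere, $|\ell|>1+\alpha$) for which the leading coefficients differ. The natural candidates are ``star-like'' or ``spider'' trees $\tau$ — a central structure probing the adjacency pattern of the seed, with each vertex decorated by $\ell(u)\ge 2$ to probe degrees — chosen large enough that $|\ell|>1+\alpha$. The claim should be that the family $\{c_{\btau}(\cdot)\}_{\btau}$ separates isomorphism classes of trees on $\ge 3$ vertices: since $c_{\btau}(S)$ is built from the way $\tau$ embeds into $S$ (and into its immediate vicinity) weighted by seed degrees, the full collection of these numbers determines $S$ up to isomorphism — this is essentially a ``moment-type'' statement about trees, and one proves it by induction on tree size or by a direct combinatorial argument showing that from all $c_{\btau}(S)$ one can recover the degree sequence of $S$ and then its adjacency structure. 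If all $c_{\btau}(S)=c_{\btau}(S')$, then $S\cong S'$, contradiction; hence some $\btau$ works. Finally, having fixed such $\btau$, the conclusion $\bbE[\calF_{\btau}(T_n^S)]-\bbE[\calF_{\btau}(T_n^{S'})]\approx n^{w(\btau)/(1+\alpha)}$ is immediate from the two asymptotic expansions, since $w(\btau)=|\ell|$ and the subleading terms are $o(n^{|\ell|/(1+\alpha)})$. The delicate bookkeeping will be making the coupling argument rigorous at the level of the constant $c_{\btau}(S)$ — i.e.\ proving that embeddings not touching the seed genuinely contribute the \emph{same} quantity to both expectations, not merely the same order — which requires care because the urn variables, though coupled, are random, and one must integrate against their common law.
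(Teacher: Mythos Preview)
Your overall architecture is the paper's: couple $T_n^S$ and $T_n^{S'}$ via Proposition~\ref{fundamentaldecompositionplantedplanartrees}, so that the difference of expectations comes only from embeddings touching the seed, and then pick $\btau$ to make that difference nonzero at leading order. Two concrete steps, however, are missing from your plan and are exactly where the work lies.

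\textbf{From ``touching the seed'' to ``contained in the seed''.} Your claim that ``embeddings not touching the seed contribute the same quantity to both'' is Proposition~\ref{observablesaroundtheseedproposition1} and is fine. But embeddings that \emph{partially} touch the seed still depend on the seed's graph structure in a complicated way (the intersection $\phi(\tau)\cap S$ can be any subtree $\sigma\subsetneq\tau$, and how the rest of $\phi(\tau)$ sits in the planted subtrees depends on which vertices of $S$ the subtree $\sigma$ lands on). The paper resolves this with a device you do not mention: call $\sigma$ an $(S,S')$-\emph{blind} tree if the number of degree-preserving embeddings $D_{\sigma,d}(S)$ equals $D_{\sigma,d}(S')$ for every decoration $d$. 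One then chooses $\tau$ to be a \emph{minimal} non-$(S,S')$-blind tree, so that every proper subtree of $\tau$ is blind; Proposition~\ref{propositionblindtreePAmodel} then shows that all partial-overlap contributions cancel in the difference, leaving only embeddings $\phi:\tau\hookrightarrow S$ (Corollary~\ref{cor:difference_in_seed}). This yields the explicit formula
\[
\lim_{n\to\infty} n^{-\frac{|\ell|}{1+\alpha}}\big(\bbE[\calF_{\btau}(T_n^S)]-\bbE[\calF_{\btau}(T_n^{S'})]\big)
= C(k,|\ell|)\sum_{d:\tau\to\bbN^*}\Big(\prod_{u\in\tau}[d(u)+\ell(u)+\alpha-2]_{\ell(u)}\Big)\big[D_{\tau,d}(S)-D_{\tau,d}(S')\big],
\]
which is what makes the problem tractable. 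Your alternative route through the recurrence~\eqref{recurrenceformulagoodform} gives $c_{\btau}(S)$ as $\calF_{\btau}(S)$ plus a convergent tail depending on all $\bbE[\calF_{\bsigma}(T_\ell^S)]$ for $\bsigma\prec\btau$ and $\ell\ge k$; this tail is seed-dependent in an opaque way, and I do not see how you would extract a usable formula from it.

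\textbf{Choosing $\ell$.} Your argument that ``the family $\{c_{\btau}(\cdot)\}_{\btau}$ separates isomorphism classes'' is asserted rather than proved, and in the absence of an explicit formula for $c_{\btau}(S)$ it is not clear how you would prove it. With the formula displayed above, the paper does something concrete: since $\tau$ is not $(S,S')$-blind, the set $\Delta=\{d:D_{\tau,d}(S)\ne D_{\tau,d}(S')\}$ is nonempty and finite; pick its lexicographic maximum $d_{\max}$ and then choose $\ell(u_r),\ell(u_{r-1}),\dots$ one at a time, each large enough that the $d_{\max}$ term dominates the remaining ones (using that $[d+\ell+\alpha-2]_\ell/[d_{\max}+\ell+\alpha-2]_\ell\to 0$ as $\ell\to\infty$ when $d<d_{\max}$). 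This is what guarantees the limit is nonzero while keeping all $\ell(u)\ge 2$ and $|\ell|>1+\alpha$.

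\textbf{The case $k\ne k'$.} ``Growing the smaller seed for a few deterministic steps'' is not available: the growth is random, so $T_k^{S'}$ is a \emph{mixture} over trees of size $k$. The paper takes $\tau$ to be a single vertex, observes that $\calT_k=\{R:\bbP(T_k^{S'}=R)>0\}$ contains trees with different maximal degree, and applies the same lexicographic argument to show that for $\ell$ large the weighted average of $D_d(S)-D_d(R)$ over $R\in\calT_k$ is nonzero.
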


The next four sections are concerned with the proof of Theorem \ref{thm:difference_from_seed}. 
To start, we will consider seeds $S$, $S'$ of same size. Finally, in Section \ref{sec:proof_main}, we prove our main result, Theorem \ref{thm:main}.

\subsection{Differences only appear around the seed}\label{afirstgeneralresultaroundtheseed}

We are there interested in the variation of the first moment of~\eqref{eq:observable}, 
depending on the value taken by the {seed tree} $S$. We state that :

\begin{proposition}\label{observablesaroundtheseedproposition1}
	For any two seed trees $S$, $S'$ of common size $k \geq 3$, 
	any decorated tree $\btau$ and any $n \geq k$ :
    \begin{align*}
        \bbE[\mathcal{F}_{\btau}(T_{n}^{S})]-\bbE[\mathcal{F}_{\btau}(T_{n}^{S'})] 
        =  \bbE[\mathcal{F}_{\btau}(T_n^{S},\{\cdot\} \cap S \neq \emptyset)]-\bbE[\mathcal{F}_{\btau}(T_n^{S'},\{\cdot\} \cap S' \neq \emptyset)],
    \end{align*}
    where:
    \begin{align*}
    \mathcal{F}_{\btau}(T_n^S,\{\cdot\} \cap S \neq \emptyset) 
    := \underset{\scriptstyle{\phi(\tau)\cap S \neq \emptyset }}{\underset{\phi: \tau \hookrightarrow T_{n}^{S}}{\sum}} 
    \prod_{u \in \tau} [\deg_{T_{n}^{S}}(\phi(u))-1]_{\ell(u)},
    \end{align*}
and the same for $S'$. In other words, $\mathcal{F}_{\btau}(T_n^S,\{\cdot\} \cap S \neq \emptyset)$ is the contribution to $\calF_{\btau}$ of embeddings that ``intersect the seed''.
\end{proposition}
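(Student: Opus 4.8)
The plan is to exploit the coupling of $T_n^S$ and $T_n^{S'}$ constructed in Section~\ref{sec:coupling}, which is available because $|S| = |S'| = k$. Under this coupling, both trees are obtained by first drawing a common P\'olya urn vector $(x_n^{v,i})$ (equivalently common sizes $k_n^{v,i}$ of the planted plane subtrees), then drawing a common family of planted plane trees $(\tau_n^{v,i})$, and finally grafting these subtrees into the corners of $S$ and of $S'$ respectively. The key point is that the subtrees hanging off the seed, together with all degrees of non-seed vertices, agree in the two coupled trees; the only discrepancy is the combinatorial structure near (and inside) the seed itself.

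First I would split each observable according to whether the embedding $\phi:\tau\hookrightarrow T_n^S$ meets the seed:
\begin{align*}
	\calF_{\btau}(T_n^S) = \calF_{\btau}(T_n^S,\{\cdot\}\cap S = \emptyset) + \calF_{\btau}(T_n^S,\{\cdot\}\cap S \neq \emptyset),
\end{align*}
and likewise for $S'$. An embedding with $\phi(\tau)\cap S = \emptyset$ lies entirely inside one of the planted plane subtrees $T_n^{v,i}$ (since $\tau$ is connected and $T_n^S\setminus S$ is a disjoint union of such subtrees, each attached to $S$ at a single corner). Thus $\calF_{\btau}(T_n^S,\{\cdot\}\cap S = \emptyset) = \sum_{v,i}\calF_{\btau}(T_n^{v,i}\setminus\{\text{root}\})$, or more precisely it is a deterministic function of the family of planted plane subtrees alone, together with their sizes — but \emph{not} of the seed $S$. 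Here one uses that the degree in $T_n^S$ of any vertex strictly inside a subtree $T_n^{v,i}$ equals its degree inside that subtree, since no seed vertex is adjacent to it. Under the coupling the family of subtrees (with sizes) is literally the same object for $T_n^S$ and for $T_n^{S'}$, so $\calF_{\btau}(T_n^S,\{\cdot\}\cap S = \emptyset)$ and $\calF_{\btau}(T_n^{S'},\{\cdot\}\cap S' = \emptyset)$ are equal as random variables, hence have the same expectation. Subtracting this common quantity from $\bbE[\calF_{\btau}(T_n^S)]$ and $\bbE[\calF_{\btau}(T_n^{S'})]$ gives exactly the claimed identity.

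The one subtlety — and the step I expect to need the most care — is the bookkeeping of degrees at the interface: a vertex $v\in S$ can be a root of several planted plane subtrees $T_n^{v,i}$ (one per corner $c_{v,i}$), and its degree in $T_n^S$ is $\deg_S(v)$ plus contributions from each attached subtree. So embeddings that ``use'' a seed vertex genuinely see the geometry of $S$ and must be kept on the error side; one has to check that an embedding avoiding $S$ really cannot have any vertex whose $T_n^S$-degree differs from its degree in the relevant subtree. This is true precisely because such an embedding's image is contained in $T_n^{v,i}\setminus\{v\}$ for a single $(v,i)$, and every vertex there is non-adjacent to $S$; the root $v$ itself is excluded. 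I would state this as a short lemma (``for $\phi(\tau)\cap S=\emptyset$, $\phi(\tau)$ is contained in a single planted plane subtree minus its root, and $\deg_{T_n^S}(\phi(u)) = \deg_{T_n^{v,i}}(\phi(u))$ for all $u$'') and then the proposition follows by taking expectations under the coupling. Note the hypothesis $k\geq 3$ guarantees $S$ and $S'$ have at least one edge, so the notion of ``planted plane subtree stemming from a corner'' is the standard one; this is used only to invoke Proposition~\ref{fundamentaldecompositionplantedplanartrees} cleanly.
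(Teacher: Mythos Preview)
Your proposal is correct and follows essentially the same approach as the paper: split $\calF_{\btau}(T_n^S)$ according to whether the embedding meets the seed, observe that an embedding avoiding $S$ lies strictly inside a single planted plane subtree $T_n^{v,i}$ (with the root excluded), and then use the coupling of Section~\ref{sec:coupling} to conclude that the ``misses the seed'' contributions are equal in law (indeed a.s.\ under the coupling) for $S$ and $S'$. Your extra paragraph on degree bookkeeping at the interface makes explicit something the paper leaves implicit, but the argument is the same.
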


The reader may be surprised that, in the statement above, we assume the location of the seed inside $T_n$ known, 
all while trying to prove that the seed may be determined.
It should be clear that, while Proposition~\ref{observablesaroundtheseedproposition1} and other steps of the proof of Theorem~\ref{thm:difference_from_seed} use the knowledge of the seed, 
their ultimate result (that is Theorem~\ref{thm:difference_from_seed}) does not. 

The proof is based on the coupling of Section~\ref{sec:coupling}, hence we use the planar $\alpha$-PA formalism. 

\begin{proof}
	Fix seed trees $S,S'$ of equal size $k \geq 3$. 
    We start by decomposing the sum over embeddings $\phi$ defining~\eqref{eq:observable} according to whether $\phi(\tau)\cap S$ is empty or not :
    \begin{align*}
    	\bbE[\mathcal{F}_{\btau}(T_n^S)] 
    	= \bbE[\mathcal{F}_{\btau}(T_n^S,\{\cdot\} \cap S \neq \emptyset)] 
    	+ \bbE[\mathcal{F}_{\btau}(T_n^S,\{\cdot\} \cap S = \emptyset)].
    \end{align*}
    If an embedding $\phi$ of $\tau$ in $T_{n}^{S}$ does not intersect $S$, 
    then $\phi(\tau)$ is necessarily a proper subset of one of the planted plane subtrees $T_{n}^{v,i}$, 
    and does not intersect the root of said tree. 
    Write $\calF_{\btau}(T_{n}^{v,i})$ for the contribution to $\calF_{\btau}(T_n^S)$ of all such embeddings. 
	Then
    \begin{align*}
    	\bbE[\mathcal{F}_{\btau}(T_n^S,\{\cdot\} \cap S = \emptyset)] 
	    =& \sum_{v \in V_S} \sum_{i=1}^{\deg_{S}(v)} \bbE[\calF_{\btau}(T_{n}^{v,i})].
    \end{align*}
    The right-hand side of the above does not actually depend on $S$, only on its size. 
    Indeed, the coupling described at the end of Section~\ref{sec:coupling}
    indicates that the same planted plane subtrees may be used to construct $T_n^{S}$ and $T_n^{S'}$. 
    While it is not necessary for this argument, one may notice that $T_n^{S}$ and $T_n^{S'}$ may even be coupled so that 
    $$ \mathcal{F}_{\btau}(T_n^S,\{\cdot\} \cap S = \emptyset)
    = \mathcal{F}_{\btau}(T_n^{S'},\{\cdot\} \cap S' = \emptyset) \qquad \text{a.s.}$$
    The result follows readily. 
\end{proof}

\subsection{Blind trees: differences appear only in the seed}\label{sec:blindtreesandarefinement}

Next we aim to improve Proposition~\ref{observablesaroundtheseedproposition1}
by showing that $\bbE[\mathcal{F}_{\btau}(T_{n}^{S})]-\bbE[\mathcal{F}_{\btau}(T_{n}^{S'})]$ only depends on embeddings totally
contained in the seed, not just intersecting it. 
This will not be true for all trees $\btau$, only for special ones. 
Some definitions are required. 

\begin{definition}[Perfect embedding]\label{decoratedtree}
    Let $\btau = (\tau,d)$ be a decorated tree and $T$ a (larger) tree. 
   An embedding $\phi: \tau \hookrightarrow T$ is called {\bf perfect} if
    \begin{align*}
    	d(u) = \deg_{T}(\phi(u)) \qquad \forall u \in \tau.
    \end{align*}
\end{definition}

Write $D_{\tau,d}(T)$ for the number of perfect embeddings from $(\tau,d)$ to $T$. 
Note that any embedding of $\tau$ in $T$ is perfect for exactly one decoration, hence contributes exactly to one $D_{\tau,d}(T)$.

\begin{definition}[Blind tree]\label{blindtreedefinition}
    Let $\tau$, $T_{1}$ and $T_{2}$ be finite trees with $|T_1| = |T_2|$.
    We say that $\tau$ is a $\mathbf{(T_{1},T_{2})}$\textbf{-blind tree} if for any decoration $d : \tau \to \mathbb{N}^*$ :
    \begin{align*}
    	D_{\tau,d}(T_1) = D_{\tau,d}(T_2).
    \end{align*}
\end{definition}

Intuitively, $\tau$ is $(T_{1},T_{2})$-blind if it can not distinguish between $T_1$ and $T_2$ using observables involving the number of embeddings and the environment around these embeddings (as are our observables $\calF_{\btau}$). 
For instance, the tree formed of a single vertex is $(T_{1},T_{2})$-blind if and only if $T_1$ and $T_2$ have the same degree sequence. 

Let $(\tau,d)$ be a decorated tree,  $\sigma \subseteq \tau$ a non empty subtree of $\tau$, and $S$ be a seed tree. 
Write $D_{\sigma,\tau,d}(T_{n}^{S})$ for the number of perfect embeddings $\Phi$ of $(\tau,d)$ in $T_{n}^{S}$
with $\Phi(\tau) \cap S = \Phi(\sigma)$. 
We obviously have :
\begin{align}\label{eq:F_intersect_S}
	\mathcal{F}_{\btau}(T_n^{S},\{\cdot\} \cap S \neq \emptyset) 
	=  \sum_{\sigma \subseteq \tau} \sum_{d :  \tau \to \mathbb{N}^*} 
	\ D_{\sigma,\tau,d}(T_{n}^{S}) \cdot \prod_{u \in \tau} \ [d(u)-1]_{\ell(u)}.
\end{align}

The next proposition constitutes the essential step for the upgrade of Proposition~\ref{observablesaroundtheseedproposition1}.

\begin{proposition}\label{propositionblindtreePAmodel}
	Let $S$ and $S'$ be two seed trees of common size $k \geq 3$. For trees $\sigma \subseteq \tau$ with $\sigma$ which is $(S,S')$-blind,
	\begin{align*}
		\forall \ d : \tau \to \mathbb{N}^*\text{ and } n\geq k, \qquad 
		\bbE[D_{\sigma,\tau,d}(T_{n}^{S})] = \bbE[D_{\sigma,\tau,d}(T_{n}^{S'})].
	\end{align*}
\end{proposition}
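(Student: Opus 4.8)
The statement asks to show $\bbE[D_{\sigma,\tau,d}(T_n^S)] = \bbE[D_{\sigma,\tau,d}(T_n^{S'})]$ whenever $\sigma\subseteq\tau$ is $(S,S')$-blind. The strategy is to use the planted-plane-tree decomposition and coupling of Section~\ref{sec:coupling}. Fix $n\geq k$ and realise $T_n^S$ and $T_n^{S'}$ via the common coupling: simulate a single size vector $(k_n^{v,i})$ via the Pólya urn of Proposition~\ref{fundamentaldecompositionplantedplanartrees}, then a single family of planted plane subtrees $(\tau_n^{v,i})$, and graft these into the corners of $S$ (resp. $S'$). I would first condition on this entire collection of planted subtrees, keeping only the way they are attached to the seed as the free data.

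The key step is to rewrite $D_{\sigma,\tau,d}(T_n^S)$ as a function of $S$ together with the (coupled) subtree data. A perfect embedding $\Phi$ of $(\tau,d)$ into $T_n^S$ with $\Phi(\tau)\cap S=\Phi(\sigma)$ decomposes as: a choice of how $\sigma$ is embedded into $S$ (this is a perfect-type embedding of $\sigma$ into $S$, but with the degrees in $T_n^S$, not in $S$), together with, for each vertex $u\in\tau\setminus\sigma$, a placement of the corresponding ``branch'' of $\tau$ inside one of the planted subtrees grafted at $S$. Crucially, once the subtrees are fixed, the degree in $T_n^S$ of a vertex $v\in V_S$ equals $\deg_S(v)$ plus the number of subtrees grafted into $v$ — and in the coupling this latter number is the same as for $S'$, since the grafting multiplicities $(k_n^{v,i})$ and the subtree shapes are shared. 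So after conditioning, $D_{\sigma,\tau,d}(T_n^S)$ becomes a sum, over decorations $d'$ of $\sigma$ with $d'\leq d$ componentwise, of $D_{\sigma,d'}(S)$ — the number of perfect embeddings of $(\sigma,d')$ into $S$ as an abstract tree — multiplied by a combinatorial weight $W(d,d',\text{subtree data})$ that counts the ways to extend each such embedding using the grafted subtrees and to account for the excess degree $d(u)-d'(u)$ contributed by subtrees at each $u\in\sigma$. The point is that $W$ depends only on $n$, on $d$, on the shared subtree data, and on the grafting pattern at the image of $\sigma$ — but not on the global geometry of $S$. Since $S$ and $S'$ are of the same size and the urn is exchangeable in its colours, the conditional law of the grafting data ``seen from a fixed embedded copy of $\sigma$'' is the same whether the host is $S$ or $S'$. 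Therefore, taking expectations,
\begin{align*}
\bbE[D_{\sigma,\tau,d}(T_n^S)] = \sum_{d'\leq d} \bbE\big[W(d,d',\cdot)\big]\cdot D_{\sigma,d'}(S),
\end{align*}
with the same expectation coefficients $\bbE[W(d,d',\cdot)]$ appearing for $S'$.

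Finally, invoke the blindness hypothesis. By Definition~\ref{blindtreedefinition}, $D_{\sigma,d'}(S)=D_{\sigma,d'}(S')$ for every decoration $d':\sigma\to\bbN^*$. Plugging this into the displayed identity and its analogue for $S'$ yields $\bbE[D_{\sigma,\tau,d}(T_n^S)]=\bbE[D_{\sigma,\tau,d}(T_n^{S'})]$, as desired.

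**Main obstacle.** The delicate point is making precise the claim that the weight $W$ — i.e.\ the number of ways to extend an embedded copy of $\sigma$ to a full perfect embedding of $(\tau,d)$ using the grafted subtrees, weighted correctly so that the degrees in $T_n^S$ match $d$ — truly depends only on the subtree data near the image of $\sigma$ and not on $S$ itself, and that its distribution is $S$-independent. One must be careful that a branch of $\tau\setminus\sigma$ attached at $u\in\sigma$ lives in a \emph{single} planted subtree grafted at $\phi(u)$, that distinct branches may be forced into distinct subtrees or the same one depending on the combinatorics of $\tau$, and that the degree of $\phi(u)$ in $T_n^S$ (which must equal $d(u)$) is determined by how many subtrees are grafted at $\phi(u)$ — a quantity governed by the shared Pólya urn. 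The exchangeability of the urn across colours, together with the fact that $|S|=|S'|$, is what guarantees the conditional law of this local picture is the same for both seeds; spelling this out carefully (rather than the combinatorial bookkeeping, which is routine) is the crux of the argument.
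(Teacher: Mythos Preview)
Your approach is essentially the paper's: both express $\bbE[D_{\sigma,\tau,d}(T_n^S)]$ as $\sum_{d'} D_{\sigma,d'}(S)\cdot(\text{seed-independent coefficient})$ via the coupling of Section~\ref{sec:coupling}, then invoke blindness. The paper sidesteps your conditional step by working directly in expectation --- it writes $D_{\sigma,\tau,d}(T_n^S)=\sum_{\phi:\sigma\hookrightarrow S}\prod_{\tv\in\sigma}\calD(d,\tv,\phi(\tv),T_n^S)$, where $\calD$ counts perfect embeddings of the branch $(\tau\setminus\sigma)_\tv$ into the subtree $(T_n^S\setminus S)_{\phi(\tv)}$ (the union of all planted subtrees grafted at $\phi(\tv)$, which is the right object since a branch may use several of them), and then observes via the coupling that the expectation of this product depends only on the degree profile $(\deg_S\phi(\tv))_{\tv\in\sigma}$, giving exactly your displayed identity with $d'$ that profile.
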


\begin{proof}
    The idea of the proof is to decompose the embeddings contributing to $D_{\tau,\sigma,d}(T_{n}^{S})$ 
    according to degrees of the vertices belonging to its image {\em in the seed tree $S$}. 
    For illustration, we will start with the simpler case when $\sigma = \tau$, then move on to the general case. \\
    \\
    \noindent\boxed{\textbf{Particular case $\sigma=\tau$}} \\
    \\
        For any seed $S$ and $n \geq |S|$,
        \begin{align*}
        	D_{\tau,\tau,d}(T_{n}^{S})
        	= \sum_{\phi: \tau \hookrightarrow S}\,\, \prod_{\tu \in \tau}\mathbf{1}_{\deg_{T_{n}^{S}}(\phi(\tu))=d(\tu)}.
        \end{align*}
        Due to the coupling explained in Section~\ref{sec:coupling}, 
        the probability of the event $\bigcap_{\tu \in \tau}\{\deg_{T_{n}^{S}}(\phi(\tu))=d(\tu)\}$
        only depends on the degrees of the vertices $(\phi(\tu))_{\tu \in \tau}$ in $S$. 
        Write $f_{d}[\deg_S\phi(\tu):\, \tu \in \tau]$ for this probability. 
        Then 
        \begin{align}\label{eq:embeddings_in_seed}
        	\bbE[D_{\tau,\tau,d}(T_{n}^{S})]
        	= \sum_{\phi: \tau \hookrightarrow S} f_d[\deg_S\phi(\tu):\, \tu \in \tau]
        	= \sum_{\ell:\tau \to \bbN^*} D_{\tau,\ell}(S) \cdot f_d[\ell(\tu):\, \tu \in \tau].
        \end{align}
        Since $\tau$ is assumed $(S,S')$-blind, the quantity above remains unaltered when $S$ is replaced by $S'$. \\
    \\
    \noindent\boxed{\textbf{General case $\sigma\subset\tau$}} \\
    \\
        For a seed $S$ and $v \in S$, write $(T_n^{S}\setminus S)_v$ for the subtree of $T_n^S$ 
        formed of all vertices that may be connected to $v$ without using any edge of $S$. 
        In terms of vertices $\{(T_n^{S}\setminus S)_v: v\in S\}$ is a partition of the vertices of $T_n^S$; 
        in terms of edges, it is a partition of the edges of $T_n^S$ not contained in  $S$. 
        In the language of Section~\ref{sec:coupling}, $(T_n^{S}\setminus S)_v$ 
        is the tree obtained by gluing $T_n^{v,1},\dots,T_n^{v,\deg_S(v)}$ together at the root. 
        Use the same notation for $\tau$ and $\sigma$:
        for $\tv \in \sigma$ write $(\tau \setminus \sigma)_\tv$ for the subtree of $\tau$ 
        formed of all vertices that may be connected to $\tv$ in $\tau$ without using any edge of $\sigma$.  
        
        For a decoration $d$ of $\tau$, and vertices $\tv \in \sigma$ and $v \in S$, write 
        $\calD(d,\tv, v, T_n^S)$ for the number of perfect embeddings $\phi$ 
        of $((\tau \setminus \sigma)_\tv, d)$  in $(T_n^{S}\setminus S)_v$
        with $\phi(\tv) = v$: 
        \begin{align*}
        	\calD(d,\tv, v, T_n^S) =
        	\sum_{\phi :(\tau \setminus \sigma)_\tv \to (T_n^{S}\setminus S)_v} 
        	\ind_{\phi(\tv) = v} \prod_{\tu \in (\tau \setminus \sigma)_\tv } \ind_{\{\deg_{T_n^S}(\phi(\tu)) = d(\tu)\}}.
        \end{align*}
        It may not be explicit in the above, but the last product does only depend on $(T_n^{S}\setminus S)_v$ for $\tu \neq \tv$, since $\deg_{T_n^S}(\phi(\tu))=\deg_{(T_n^{S}\setminus S)_v}(\phi(\tu))$. For $v \in S$, it should be noted that~$\deg_{T_n^S}(v)=\deg_{(T_n^{S}\setminus S)_v}(v)+\deg_{S}(v)$.
        
        Now express $D_{\sigma,\tau,d}(T_{n}^{S})$ as follows: 
        \begin{align}\label{eq:calD}
        	D_{\sigma,\tau,d}(T_{n}^{S})
        	= \sum_{\phi:\sigma \hookrightarrow S} \,\,\prod_{\tv \in \sigma} \calD(d,\tv, \phi(\tv), T_n^S).
        \end{align}
        Indeed, any embedding contributing $D_{\tau,\sigma,d}(T_{n}^{S})$ comes from an embedding of $\sigma$ in $S$ and 
        embeddings of the subtrees $(\tau \setminus \sigma)_\tv$ in the corresponding subtrees of $T_n^{S}\setminus S$.
        The conditions that the embedding is perfect may be verified separately in each subtree. 
        
        Now, due to the coupling of Section~\ref{sec:coupling} and 
        to the expression of $(T_n^{S}\setminus S)_v$ in terms of $(T_n^{v,i})_{i\geq1}$, 
        the expectation of the product above only depends on the degrees of the vertices $\phi(\tv)$ in $S$:
        \begin{align*}
        	\bbE\Big[\prod_{\tv \in \sigma} \calD(d,\tv, \phi(\tv), T_n^S)\Big] = f_{\sigma,\tau,d}[\deg_S(\phi(\tv)) : \tv \in \sigma], 
        \end{align*}
        for some explicit function $f_{\sigma,\tau,d}$. Injecting this in~\eqref{eq:calD}, we find
        \begin{align*}
        	\bbE\big[D_{\sigma,\tau,d}(T_{n}^{S})\big]
        	= \sum_{\phi:\sigma \hookrightarrow S} \,f_{\sigma,\tau,d}[\deg_S(\phi(\tv)) : \tv \in \sigma]
        	= \sum_{\ell:\sigma \to \bbN^*} D_{\sigma,\ell}(S) \cdot f_{\sigma,\tau,d}[\ell(\tv) : \tv \in \sigma].
        \end{align*}
		Since $\sigma$ is assumed $(S,S')$-blind, the quantity above is equal when $S$ is replaced by $S'$. 
\end{proof}

We are now ready to give a finer version of Proposition~\ref{observablesaroundtheseedproposition1}. 
Consider two distinct seeds $S,S'$ of equal size $k \geq 4$ (non two distinct seeds of smaller size exist). 
Then there exists at least one tree which is not $(S,S')$-blind, for instance $S$ or $S'$ have this property. 
It follows that there exists at least one minimal tree which is not $(S,S')$-blind, 
that is a tree $\tau$ which is not $(S,S')$-blind but for which any proper subtree $\sigma \subsetneq \tau$ is $(S,S')$-blind. 

\begin{corollary}\label{cor:difference_in_seed}
    Let $\tau$ be a minimal tree that is not ${(S,S')}$-blind. 
    Then for any decoration $\ell$ of~$\tau$    
    \begin{align*}
	    \bbE[\mathcal{F}_{\btau}(T_n^S)]-\bbE[\mathcal{F}_{\btau}(T_n^{S'})] 
	    =  \bbE[\mathcal{F}_{\btau}(T_n^S,\{\cdot\} \subseteq S)]-\bbE[\mathcal{F}_{\btau}(T_n^{S'},\{\cdot\} \subseteq S')],
    \end{align*}
    where
    \begin{align*}
        \mathcal{F}_{\btau}(T_n^{S},\{\cdot\} \subseteq S) 
        := \sum_{\phi: \tau \hookrightarrow S} \,\, \prod_{u \in \tau} [\deg_{T_n^S}(\phi(u))-1]_{\ell(u)},
    \end{align*}
and the same for $S'$.
\end{corollary}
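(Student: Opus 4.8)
The plan is to combine Proposition~\ref{observablesaroundtheseedproposition1}, the expansion~\eqref{eq:F_intersect_S}, and Proposition~\ref{propositionblindtreePAmodel}, treating the corollary as a bookkeeping consequence of these three inputs. By Proposition~\ref{observablesaroundtheseedproposition1}, the left-hand side of the claimed identity equals $\bbE[\mathcal{F}_{\btau}(T_n^S,\{\cdot\}\cap S\neq\emptyset)]-\bbE[\mathcal{F}_{\btau}(T_n^{S'},\{\cdot\}\cap S'\neq\emptyset)]$ (note $|S|=|S'|=k$, so the notion of a blind tree is meaningful here). Hence it suffices to analyse each of these two terms through~\eqref{eq:F_intersect_S}, which decomposes $\mathcal{F}_{\btau}(T_n^S,\{\cdot\}\cap S\neq\emptyset)$ as a sum over non-empty subtrees $\sigma\subseteq\tau$ and decorations $d:\tau\to\bbN^*$ of $D_{\sigma,\tau,d}(T_n^S)\cdot\prod_{u\in\tau}[d(u)-1]_{\ell(u)}$.

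First I would isolate the term $\sigma=\tau$ in this sum. A perfect embedding $\Phi$ of $(\tau,d)$ in $T_n^S$ with $\Phi(\tau)\cap S=\Phi(\sigma)=\Phi(\tau)$ is precisely a perfect embedding whose image lies entirely in $S$; regrouping the embeddings $\phi:\tau\hookrightarrow S$ according to the degree vector $(\deg_{T_n^S}(\phi(u)))_{u\in\tau}$ shows that $\sum_{d:\tau\to\bbN^*}D_{\tau,\tau,d}(T_n^S)\cdot\prod_{u\in\tau}[d(u)-1]_{\ell(u)}$ equals exactly $\mathcal{F}_{\btau}(T_n^S,\{\cdot\}\subseteq S)$, and likewise with $S'$ in place of $S$.

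Next I would handle the remaining terms, i.e. the proper subtrees $\sigma\subsetneq\tau$. By hypothesis $\tau$ is a minimal tree that is not $(S,S')$-blind, so every proper subtree $\sigma\subsetneq\tau$ is $(S,S')$-blind. Proposition~\ref{propositionblindtreePAmodel} then gives $\bbE[D_{\sigma,\tau,d}(T_n^S)]=\bbE[D_{\sigma,\tau,d}(T_n^{S'})]$ for every such $\sigma$, every decoration $d$ and every $n\geq k$. Taking expectations in~\eqref{eq:F_intersect_S} for both seeds and subtracting, all contributions coming from $\sigma\subsetneq\tau$ cancel, and only the $\sigma=\tau$ term survives; by the previous paragraph this term is exactly $\bbE[\mathcal{F}_{\btau}(T_n^S,\{\cdot\}\subseteq S)]-\bbE[\mathcal{F}_{\btau}(T_n^{S'},\{\cdot\}\subseteq S')]$, which is the assertion.

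\textbf{Expected obstacle.} There is essentially no serious obstacle: the statement is a direct consequence of the two propositions. The only points requiring a little care are the routine identification of the $\sigma=\tau$ contribution with $\mathcal{F}_{\btau}(\cdot,\{\cdot\}\subseteq\cdot)$, and the degenerate case in which $\tau$ itself is a single vertex (which occurs exactly when $S$ and $S'$ already differ in their degree sequences): then there are no proper non-empty subtrees $\sigma$, the sum over $\sigma$ reduces to the single term $\sigma=\tau$, and the two $\mathcal{F}_{\btau}$-quantities coincide term by term, so the corollary holds trivially in this case.
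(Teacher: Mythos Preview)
Your proposal is correct and follows essentially the same route as the paper's proof: invoke Proposition~\ref{observablesaroundtheseedproposition1}, expand via~\eqref{eq:F_intersect_S}, kill the contributions of proper subtrees $\sigma\subsetneq\tau$ using minimality and Proposition~\ref{propositionblindtreePAmodel}, and identify the surviving $\sigma=\tau$ term with $\mathcal{F}_{\btau}(\cdot,\{\cdot\}\subseteq\cdot)$. Your handling of the single-vertex case is a harmless bonus the paper does not spell out.
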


\begin{proof}
    We apply~\eqref{eq:F_intersect_S} together with Proposition~\ref{propositionblindtreePAmodel}
    and use the minimality of $\tau$ to obtain
    \begin{align*}
    	& \bbE[\mathcal{F}_{\btau}(T_n^S,\{\cdot\} \cap S \neq \emptyset)]-\bbE[\mathcal{F}_{\btau}(T_n^{S'},\{\cdot\} \cap S' \neq \emptyset)]  \\ 
    	& \qquad =  \sum_{d :  \tau \to \mathbb{N}^{*}} \ 
    	\Big( \prod_{u \in \tau} [d(u)-1]_{\ell(u)} \Big) \cdot 
    	\big(\bbE[D_{\tau,\tau,d}(T_n^S)]-\bbE[D_{\tau,\tau,d}(T_{n}^{S'})]\big).
    \end{align*}
    Furthermore, it is clear that:
    \begin{align*}
        \bbE[\mathcal{F}_{\btau}(T_n^{S},\{\cdot\} \subseteq S)] 
        = \sum_{d :  \tau \to \mathbb{N}^{*}} \  \Big( \prod_{u \in \tau} [d(u)-1]_{\ell(u)} \Big) \cdot \bbE[D_{\tau,\tau,d}(T_n^S)].
    \end{align*}
    Of course, the above equality also holds when $S$ is replaced by $S'$. Hence the result.
\end{proof}

\subsection{Evaluating the difference in the seed}

In light of the above, the quantities of interest for the proof of Theorem~\ref{thm:difference_from_seed}
are of the type $\bbE[\mathcal{F}_{\btau}(T_n^S,\{\cdot\} \subseteq S)]$.
We give below a more convenient expression for them based on~\eqref{eq:embeddings_in_seed}.

\newcommand{\other}{{\rm other}}
\begin{lemma}\label{lemmaobservablesinseed}
	Let $S$ be a seed tree of size $k$ and $\btau = (\tau,\ell)$ a decorated tree. Then, for any $n \geq k$
	\begin{align}\label{lemmaobservablesinseedequality}
		\bbE[\mathcal{F}_{\btau}(T_n^S,\{\cdot\} \subseteq S)] 
		= \sum_{d : \tau \to \mathbb{N}^*} \mathbf{f}(k,n;\, d,\ell) \cdot D_{\tau,d}(S),
	\end{align}
	where $\mathbf{f}(k,n;\, d,\ell)$ are functions defined as follows (for decorations $d$ such that $|d| < 2k-2$).
	
	Let $(y_n(u) : u \in \btau \cup \{\other\}, \, n \geq k)$ be a P\'olya urn with $|\tau| +1$ colours, 
	replacement matrix $M=(m_{uv})_{u,v \in  \btau \cup \{\other\}}$ with 
	\begin{align*}
    	m_{uv} 
    	= \begin{cases}
    	1 \qquad &\text{ if $u = v \neq \other$},\\
    	1 + \alpha &\text{ if $u=v = \other$}, \\
    	\alpha &\text{ if $u\neq v = \other$}, \\	
    	0 &\text{otherwise},	
    	\end{cases}
    \end{align*}
	and initial states $y_k(u) = \alpha + d(u)-1$ for $u \in \tau$ and $y_k(\other) = (1+\alpha) k - 2 - \sum_{u \in\tau} y_k(u)$.
	Then 
	$$\mathbf{f}(k,n;\, d,\ell) =\bbE\Big[ \prod_{u\in \tau}\, [y_{n}(u) - \alpha ]_{\ell(u)}\Big].$$
\end{lemma}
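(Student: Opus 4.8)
The plan is to combine the decomposition~\eqref{eq:embeddings_in_seed} with the coupling of Section~\ref{sec:coupling}, identifying the conditional probability that a fixed embedding into $S$ realises a prescribed degree profile in $T_n^S$ as a Pólya urn functional. First I would recall from~\eqref{eq:embeddings_in_seed} that
\begin{align*}
	\bbE[\mathcal{F}_{\btau}(T_n^S,\{\cdot\} \subseteq S)]
	= \sum_{d:\tau \to \bbN^*} D_{\tau,d}(S) \cdot
	\bbE\Big[\prod_{u\in\tau}[d(u) + \delta(u)-1]_{\ell(u)}\Big],
\end{align*}
where, for a fixed embedding $\phi:\tau\hookrightarrow S$ realising degrees $d(u)=\deg_S\phi(u)$, the random variable $\delta(u)$ counts the number of new vertices attached (directly or through descendants, i.e.\ landing in one of the planted plane subtrees $T_n^{\phi(u),i}$) at $\phi(u)$ by time $n$, so that $\deg_{T_n^S}\phi(u) = d(u)+\delta(u)$. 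Thus $\mathbf{f}(k,n;d,\ell)$ must be identified with $\bbE[\prod_{u}[d(u)+\delta(u)-1]_{\ell(u)}]$, and the content of the lemma is that this equals the stated Pólya-urn expectation.

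Next I would invoke Proposition~\ref{fundamentaldecompositionplantedplanartrees} to track the degree growth. For each vertex $v\in S$, the quantities $x_n^{v,i} = (1+\alpha)k_n^{v,i} - 1$ (for $i=1$) or $(1+\alpha)k_n^{v,i}-\alpha$ (for $i\geq2$) form a Pólya urn with diagonal replacement $(1+\alpha)I$; moreover $\deg_{T_n^S}(v) = \deg_S(v) + \sum_{i=1}^{\deg_S(v)}(k_n^{v,i}-1)$, since each planted subtree contributes $k_n^{v,i}-1$ extra edges at $v$. Summing the urn coordinates over $i$ and over all $v\notin\phi(\tau)$ into a single ``other'' colour, and keeping one colour per vertex $u\in\tau$ tracking $\sum_i x_n^{\phi(u),i}$, one obtains a coarsened Pólya urn on $|\tau|+1$ colours. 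The key computation is to check that lumping the $2k-2$ coordinates in this way yields exactly the replacement matrix $M$ of the statement: a ball of colour $u\in\tau$ draws a new ball of colour $u$ (replacement $1$, matching the diagonal $1+\alpha$ split across the $d(u)$ corners... — here one must be careful), while the ``other'' colour behaves with the extra $\alpha$ because it absorbs the red corners of vertices outside the embedding. One then verifies the initial condition: $y_k(u) = \sum_i x_k^{\phi(u),i} = \alpha + (d(u)-1)\cdot 1 = \alpha + d(u)-1$ (one red corner plus $d(u)-1$ blue corners at $\phi(u)$ in $S$), and $y_k(\other)$ is whatever is left of the total $(1+\alpha)k-2$. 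Finally, $y_n(u) - \alpha = \sum_i(k_n^{\phi(u),i}-1) + \text{(constant)}$ should reduce precisely to $\deg_{T_n^S}\phi(u) - 1$, so that $\prod_u[y_n(u)-\alpha]_{\ell(u)} = \prod_u[\deg_{T_n^S}\phi(u)-1]_{\ell(u)}$, and taking expectations conditional on the urn (using that the expectation depends only on $d$, i.e.\ on $y_k$) gives the claim.

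I expect the main obstacle to be the bookkeeping in the lumping step: verifying that the projection of the $(2k-2)$-colour urn of Proposition~\ref{fundamentaldecompositionplantedplanartrees} onto the $(|\tau|+1)$-colour state $(y_n(u))_{u\in\tau\cup\{\other\}}$ is again Markovian with the asserted matrix $M$ — in particular getting the asymmetry between the diagonal entry $1$ for genuine vertices of $\tau$ and $1+\alpha$ for $\other$, and the off-diagonal $\alpha$ in the $\other$-column, exactly right. This requires carefully matching which corners (red vs.\ blue) are being counted and how a single attachment event increments the lumped coordinates. The condition $|d| < 2k-2$ ensures the initial urn composition is genuinely nonnegative (there are enough corners in $S$ to host the prescribed degrees), so $y_k(\other)\geq 0$; this should be noted but causes no real difficulty. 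Once the urn identification is in place, the remaining manipulations — that the product of falling factorials of the urn coordinates equals the product of falling factorials of $\deg_{T_n^S}\phi(u)-1$, and that the conditional expectation depends on $\phi$ only through the degree profile $d$ — are routine consequences of Proposition~\ref{fundamentaldecompositionplantedplanartrees} and~\eqref{eq:embeddings_in_seed}.
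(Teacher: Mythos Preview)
Your overall strategy --- express $\bbE[\calF_{\btau}(T_n^S,\{\cdot\}\subseteq S)]$ as a sum over embeddings, then identify the law of the degree vector along a fixed embedding as a P\'olya urn --- is the right one, and is exactly what the paper does. However, the specific mechanism you propose for the urn identification contains a genuine error.

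The formula $\deg_{T_n^S}(v) = \deg_S(v) + \sum_{i}(k_n^{v,i}-1)$ is false: $k_n^{v,i}-1$ counts \emph{all} non-root vertices of the planted subtree at corner $c_{v,i}$, not just the children of the root. Only the latter contribute to the degree of $v$. Consequently, if you lump the $(2k-2)$-colour urn of Proposition~\ref{fundamentaldecompositionplantedplanartrees} by summing $\sum_i x_n^{\phi(u),i}$ over the corners of each embedded vertex, the resulting coordinate tracks total subtree sizes attached at $\phi(u)$, not $\deg_{T_n^S}\phi(u)$. Moreover, lumping colours in a \emph{diagonal} replacement urn always yields another diagonal urn, here with replacement $(1+\alpha)I_{|\tau|+1}$; this is \emph{not} the non-diagonal matrix $M$ of the lemma, so the identification you are aiming for cannot come out of this lumping.

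The fix --- and this is what the paper does --- is to bypass Proposition~\ref{fundamentaldecompositionplantedplanartrees} altogether and observe directly that the vector $\big(\deg_{T_n^S}\phi(u)\big)_{u\in\tau}$ is Markovian: at step $n$, with probability $\frac{\deg_{T_n^S}\phi(u)-1+\alpha}{(1+\alpha)n-2}$ the degree of $\phi(u)$ increases by $1$ and all other tracked degrees stay fixed. Setting $y_n(u)=\deg_{T_n^S}\phi(u)+\alpha-1$ and $y_n(\other)=(1+\alpha)n-2-\sum_{u\in\tau}y_n(u)=\sum_{w\in T_n^S\setminus\phi(\tau)}(\deg_{T_n^S}(w)+\alpha-1)$, one checks: when colour $u\in\tau$ is drawn, $y(u)$ gains $1$ and $y(\other)$ gains $\alpha$ (the new leaf contributes $1+\alpha-1=\alpha$ to the ``other'' sum); when colour $\other$ is drawn, $y(\other)$ gains $1+\alpha$. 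This is precisely the matrix $M$. The identity $y_n(u)-\alpha=\deg_{T_n^S}\phi(u)-1$ is then tautological, and grouping embeddings $\phi$ by their degree profile $d$ in $S$ gives~\eqref{lemmaobservablesinseedequality}.
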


\begin{remark}
	The actual definition of the variables $(y_n(u) : u \in \btau \cup \{\other\}, \, n \geq k)$ is not very important; 
	a more intuitive expression will be used (see~\eqref{eq:f_deg}).
	The important aspect of~\eqref{lemmaobservablesinseedequality} is that $\bbE[\mathcal{F}_{\btau}(T_n^S,\{\cdot\} \subseteq S)]$ 
	is factorised between a part that depends on $n$ but not the seed structure (namely $\mathbf{f}(k,n;\, d,\ell)$) 
	and one that depends on the seed structure but not on $n$ (namely $D_{\tau,d}(S)$). 
\end{remark}

\begin{proof}
	Fix $S$ and $\btau = (\tau,\ell)$.
	Recall that 
	\begin{align}\label{eq:in_seed_1}
		\bbE[\mathcal{F}_{\btau}(T_n,\{\cdot\} \subseteq S)] 
		= \sum_{\phi: \tau \hookrightarrow S} \bbE\Big[ \prod_{u \in \tau} \big[\deg_{T_n^S}(\phi(u)) -1 \big]_{\ell(u)}\Big],
	\end{align}
	where the sum is over all embeddings of $\tau$ in $S$.
	
	Let $\phi$ be an embedding of $\tau$ in $S$. 
	Then the family $\{\deg_{T_n^S}(\phi(u)): \, u \in \tau\}$ has a markovian dynamics as $n$ increases described as follows:
	\begin{tight_itemize}
		\item for each $u \in \tau$, with probability $\frac{\deg_{T_n^S}(\phi(u)) -1 + \alpha}{(1+\alpha)n -2}$, we have
		$\deg_{T_{n+1}^S}(\phi(u)) = \deg_{T_n^S}(\phi(u)) +1$ and all other entries remain the same; 
		\item otherwise $\deg_{T_{n+1}^S}(\phi(u)) = \deg_{T_n^S}(\phi(u))$ for all $u \in \tau$. 
	\end{tight_itemize}
 	Then, if we set 
	\begin{align*}
		&y_{n}(u) = \deg_{T_{n}^S}(\phi(u)) +\alpha - 1, \qquad\qquad \text{ for $u \in \tau$ and }\\
		&y_{n}(\other) =  (1+\alpha) n - 2 - \sum_{u \in\tau} y_n(u) =  \sum_{v \in T_n^S \setminus \phi(\tau)} \deg_{T_{n}^S}(v) +\alpha - 1,
	\end{align*}
	we deduce readily that the family $(y_n(u) : u \in \btau \cup \{\other\}, \, n \geq k)$ has the dynamics
	of a P\'olya urn with the replacement matrix $M$ and initial conditions given in the statement. 
	Thus 
	\begin{align}\label{eq:f_deg}
		\bbE\Big[ \prod_{u \in \tau} \big[\deg_{T_n^S}(\phi(u)) -1 \big]_{\ell(u)}\Big] 
		=\bbE\Big[ \prod_{u\in \tau}\, [y_{n}(u) - \alpha ]_{\ell(u)}\Big]
		=\mathbf{f}(k,n;\, d,\ell).
	\end{align}
	
	By inserting the above in~\eqref{eq:in_seed_1} and grouping the terms of the sum by the degrees in $S$ of the embedding, 
	we obtain~\eqref{lemmaobservablesinseedequality}.
\end{proof}
%

Corollary~\ref{cor:difference_in_seed} and Lemma~\ref{lemmaobservablesinseed} state that, for seeds $S \neq S'$ of same size and $\tau$ 
which is a minimal non-$(S,S')$-blind tree, the difference of the observables $\calF_{\btau}$ for $T_n^S$ and $T_n^{S'}$ 
is a linear combination of functions $\mathbf{f}(k,n;\, d,\ell)$:
\begin{align}\label{corollaryemmaobservablesinseedequality}
	\bbE\big[\mathcal{F}_{\btau}(T_n^S)\big]-\bbE\big[\mathcal{F}_{\btau}(T_n^{S'})\big]  
	=& \sum_{d: \tau \to \bbN^*} \mathbf{f}(k,n;\, d,\ell) \cdot [D_{\tau,d}(S)-D_{\tau,d}(S')]
\end{align}
for any decoration $\ell$ of $\tau$. 
Thus, for our proof of Theorem~\ref{thm:difference_from_seed}, 
it will be of great interest to study the asymptotics of the functions $ \mathbf{f}(k,n;\, d,\ell)$ as $n \to \infty$. 
The relevant result is the following.

\begin{proposition}\label{asymptoticsonquantitiesf}
    Let $S$ be a seed tree of size $k \geq 2$, $\btau = (\tau,\ell)$ be a decorated tree 
    and $d : \tau \to \bbN^*$ be some decoration of $\tau$. 
    Then
    \begin{align}\label{convergencerescaledquantitiesf}
    	n^{-\frac{|\ell|}{1+\alpha}} \cdot \mathbf{f}(k,n;\, d,\ell)
    	\xrightarrow[n \to \infty]{} \ C(k,|\ell|) \cdot \prod_{u \in \tau}\ [d(u) +\ell(u)+\alpha-2]_{\ell(u)},
    \end{align}
    where $C(k,|\ell|) > 0$ is some constant depending only on $k$ and $|\ell|$. 
\end{proposition}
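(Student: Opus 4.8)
The strategy is to analyze the Pólya urn $(y_n(u))_{u \in \tau \cup \{\text{other}\}}$ directly, using the explicit moment formulas available for Pólya urns with diagonal-type replacement matrices. First I would observe that the replacement matrix $M$ is triangular in a useful sense: the colors $u \in \tau$ each get incremented only through their own draws (replacement $1$), while the color $\text{other}$ receives $1+\alpha$ on its own draws and $\alpha$ from every other draw. Consequently the total $\sum_{u} y_n(u)$ grows deterministically as $(1+\alpha)n - 2$ (this is built into the initial condition and the fact that each step adds $1+\alpha$ to the total), which is why the normalization $n^{|\ell|/(1+\alpha)}$ appears. The quantity $\mathbf{f}(k,n;d,\ell) = \mathbb{E}\big[\prod_{u \in \tau} [y_n(u)-\alpha]_{\ell(u)}\big]$ is a mixed factorial moment of the urn, and for such urns there is a clean recursion: conditioning on step $n \to n+1$, one gets
\begin{align*}
	\mathbb{E}\Big[\prod_{u}[y_{n+1}(u)-\alpha]_{\ell(u)} \,\Big|\, \mathcal{F}_n\Big]
	= \prod_u [y_n(u)-\alpha]_{\ell(u)} + \sum_{u \in \tau} \frac{y_n(u)}{(1+\alpha)n-2}\Big(\ell(u)\prod_{v}[y_n(v)-\alpha]_{\ell(v)-\delta_{uv}} \text{-type terms}\Big),
\end{align*}
using $[m]_{\ell} = [m-1]_\ell + \ell[m-1]_{\ell-1}$ and $y_n(u) = (y_n(u)-\alpha) + \alpha$, exactly as in the proof of Proposition~\ref{prop:recurrence}. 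Taking expectations turns this into a closed recursion among the quantities $\mathbf{f}(k,n;d,\ell')$ for $|\ell'| \le |\ell|$.

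Next I would solve this recursion asymptotically by the same device used throughout Section~\ref{sec:moments}: introduce the product $\omega_n = \prod_{j=k}^{n-1}(1 + \frac{|\ell|}{(1+\alpha)j-2})^{-1} \approx n^{-|\ell|/(1+\alpha)}$ and check that $\omega_n \mathbf{f}(k,n;d,\ell)$ converges. The point is that the "diagonal" part of the recursion contributes exactly the factor $1 + \frac{|\ell|}{(1+\alpha)n-2}$ (since $\sum_u \ell(u) = |\ell|$), while the lower-order terms, carrying $\mathbf{f}(k,n;d,\ell')$ with $|\ell'| = |\ell|-1$, are of size $n^{(|\ell|-1)/(1+\alpha)}/n$, whose telescoped sum against $\omega_n$ converges absolutely. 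Martingale convergence (the sequence $\omega_n \mathbf{f}$ is essentially increasing and bounded, or one argues $L^1$ convergence directly from the summability) then gives that $n^{-|\ell|/(1+\alpha)}\mathbf{f}(k,n;d,\ell)$ has a limit. To identify the limit I would instead recommend the cleaner route via known exact moment formulas for triangular Pólya urns: writing $a_n(u) = y_n(u) - \alpha = \deg_{T_n^S}(\phi(u)) - 1$, the process $a_n(u)$ increments by $1$ at step $n$ with probability $\frac{a_n(u)+\alpha}{(1+\alpha)n-2}$, independently across $u$ in the sense of a generalized Pólya--Eggenberger scheme, and the factorial moments satisfy
\begin{align*}
	\mathbb{E}\big[[a_{n+1}(u)]_{\ell}\big] = \Big(1 + \tfrac{\ell}{(1+\alpha)n-2}\Big)\mathbb{E}\big[[a_n(u)]_\ell\big] + \tfrac{\ell(\ell+\alpha-1)}{(1+\alpha)n-2}\mathbb{E}\big[[a_n(u)]_{\ell-1}\big].
\end{align*}
Solving this single-variable recursion with initial value $[a_k(u)]_\ell = [d(u)-1]_\ell$ yields $\mathbb{E}[[a_n(u)]_\ell] \sim c \cdot n^{\ell/(1+\alpha)} \cdot [d(u)+\ell+\alpha-2]_\ell$ where the combinatorial prefactor $[d(u)+\ell+\alpha-2]_\ell$ arises from unwinding the $\ell$ layers of the recursion (each layer contributes a factor accounting for the shift), and $c$ depends only on $\ell$ and the starting time $k$ (through ratios of Gamma functions $\Gamma(\cdot)/\Gamma(\cdot)$ evaluated at arguments involving $k$).

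The final step is to combine the single-color asymptotics into the joint one. Here I would exploit that, conditionally on the sequence of which color is drawn at each step — equivalently, conditionally on the vector $(a_n(u))_u$ of final values, or more precisely using the urn's exchangeability — the growth of the different $a_n(u)$ decouples in the limit: after normalization, $(n^{-1/(1+\alpha)}a_n(u))_{u \in \tau}$ converges jointly to a vector of (correlated) positive random variables, but the \emph{product of factorial moments} factorizes because the cross terms in the recursion for $\mathbb{E}[\prod_u [a_n(u)]_{\ell(u)}]$ are all lower order (they involve decreasing $|\ell|$). Concretely, iterating the joint recursion shows $n^{-|\ell|/(1+\alpha)}\mathbf{f}(k,n;d,\ell) \to C(k,|\ell|)\prod_{u}[d(u)+\ell(u)+\alpha-2]_{\ell(u)}$, with $C(k,|\ell|)$ collecting all the Gamma-function constants, which depend only on $k$ and the total $|\ell|$ and not on how $|\ell|$ is distributed among vertices nor on $d$ beyond the displayed factorial factor. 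Positivity of $C(k,|\ell|)$ is clear since all quantities involved are positive.

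The main obstacle is the joint (multi-color) limit and in particular verifying that the constant $C(k,|\ell|)$ depends only on $k$ and $|\ell|$ — i.e.\ that the seed-structure dependence is entirely captured by $\prod_u [d(u)+\ell(u)+\alpha-2]_{\ell(u)}$ and $D_{\tau,d}(S)$ elsewhere. This requires carefully tracking that the only place the individual decorations $\ell(u)$ and individual degrees $d(u)$ enter the leading-order term is through that explicit product, with the remaining constant insensitive to the partition of $|\ell|$. I would handle this by an induction on $|\ell|$ using the recursion above: the inductive step shows that lowering any single $\ell(u)$ by one multiplies the leading constant by a universal factor $\tfrac{|\ell|}{|\ell| + \alpha - 1}\cdot(\text{something depending only on }|\ell|)$ times the ratio of the corresponding factorial prefactors, so the prefactor structure $\prod_u[d(u)+\ell(u)+\alpha-2]_{\ell(u)}$ propagates and the residual constant depends only on $|\ell|$ (and on $k$ via the base case, where all $\ell(u)=0$ and $\mathbf{f}(k,n;d,0)=1$, giving $C(k,0)=1$). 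The single-color factorial-moment asymptotics is a standard Pólya urn computation, but I would write it out carefully since the affine shift $\alpha$ makes the Gamma-function bookkeeping slightly delicate.
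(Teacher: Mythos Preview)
Your approach via the recursion of Section~\ref{sec:moments} is sound for establishing that $n^{-|\ell|/(1+\alpha)}\mathbf{f}(k,n;d,\ell)$ converges to \emph{some} limit, but the identification of that limit as $C(k,|\ell|)\prod_u[d(u)+\ell(u)+\alpha-2]_{\ell(u)}$ is where your argument has a genuine gap. The inductive scheme you sketch does not cleanly close: telescoping gives
\[
\lim_n \omega_n^{\ell}\,\mathbf{f}(k,n;d,\ell)
= \prod_u[d(u)-1]_{\ell(u)}
+ \sum_{u}\ell(u)(\ell(u)+\alpha-1)\sum_{j\ge k}\frac{\omega_{j+1}^{\ell}}{(1+\alpha)j-2}\,\mathbf{f}(k,j;d,\ell^{(u-)}),
\]
and the infinite sum on the right depends on $\mathbf{f}(k,j;d,\ell^{(u-)})$ at \emph{every} $j$, not merely on its leading asymptotic. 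So knowing inductively that $\omega_j^{\ell^{(u-)}}\mathbf{f}(k,j;d,\ell^{(u-)})\to\prod_v[d(v)+\ell^{(u-)}(v)+\alpha-2]_{\ell^{(u-)}(v)}$ is insufficient: you would need exact values, or at least a cancellation identity combining the initial term $\prod_u[d(u)-1]_{\ell(u)}$ with the summed corrections into the single product $\prod_u[d(u)+\ell(u)+\alpha-2]_{\ell(u)}$. Your claim that ``lowering any $\ell(u)$ by one multiplies the leading constant by a universal factor'' is not correct as stated, and the remark that ``the product of factorial moments factorizes because the cross terms are all lower order'' is misleading --- those lower-order terms do contribute to the limit.

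The paper avoids this obstacle entirely by a change of observable. Instead of $\prod_u[\deg_{T_n}(\phi(u))-1]_{\ell(u)}$, it considers the \emph{shifted} product
\[
M_n \;=\; \prod_{u\in\tau}\big[\deg_{T_n^S}(\phi(u))+\alpha+\ell(u)-2\big]_{\ell(u)},
\]
and checks by the same one-step computation you wrote down that $\calW_n M_n$ is an \emph{exact} martingale --- the shift is chosen precisely so that the lower-order terms vanish, since $(a+\alpha)\cdot[a+\ell+\alpha-1]_{\ell-1}=[a+\ell+\alpha-1]_{\ell}$. Hence $\calW_n\,\bbE[M_n]=M_k=\prod_u[d(u)+\ell(u)+\alpha-2]_{\ell(u)}$ for \emph{all} $n$, which pins down both the limit and the fact that the residual constant $C(k,|\ell|)=\lim_n n^{|\ell|/(1+\alpha)}\calW_n$ depends only on $k$ and $|\ell|$. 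The passage from $\bbE[M_n]$ back to $\mathbf{f}(k,n;d,\ell)$ is then a single dominated-convergence step, since the ratio $\prod_u[\deg-1]_{\ell(u)}/M_n$ is bounded by~$1$ and tends to~$1$ as degrees diverge. I recommend adopting this shift; it replaces your entire inductive bookkeeping by a two-line martingale verification.
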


The above is a technical result based on the study of P\`olya urns;
the rest of the section is dedicated to proving it. 
It is possible to prove~\eqref{convergencerescaledquantitiesf} using only the abstract quantities $(y_n(u) : u \in \btau \cup \{\other\}, \, n \geq k)$. We prefer however to use the more visual interpretation of these quantities in terms of  the degrees in $T_n^S$ of some embedding of $\tau$ in $S$.



Fix a seed $S$ of size $k$ and $R$ a subtree of $S$. 
Also fix some decoration $\ell$ of $R$.
Set 
\begin{align*}
	M_n^{\ell} 
	= \prod_{u \in R}\, [\deg_{T_n^S} (u) +\alpha + \ell(u) - 2]_{\ell(u)}
\end{align*}
and
\begin{align*}
    	\calW_n^{\ell}
    	= \begin{cases}
    	1 \qquad &\text{ if $n=k$},\\
    	 \displaystyle{\prod_{t=k}^{n-1} \Big( 1 + \frac{|\ell|}{(1+\alpha)t-2} \Big)^{-1}} &\text{ if $n \geq k+1$}.	
    	\end{cases}
    \end{align*}
When the dependance on $\ell$ is not important, we will drop it from the notation.

\begin{lemma}
	The sequence $(M_n \cdot \calW_n )_{n\geq k}$ is a martingale and is bounded in $L^2$. 
\end{lemma}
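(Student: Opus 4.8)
Here is the plan. I would establish the two assertions separately: the martingale property by a direct one-step analysis of the degree dynamics, and the $L^2$-boundedness by comparing $(M_n^{\ell})^2$ with the analogous quantity $M_n^{2\ell}$ attached to the doubled decoration $u\mapsto 2\ell(u)$.

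\textbf{The martingale property.} Fix $n\ge k$ and condition on $\calE_n=\sigma(T_k^S,\dots,T_n^S)$. Since $R\subseteq S=T_k^S$, every vertex of $R$ already belongs to $T_n^S$, and passing from $T_n^S$ to $T_{n+1}^S$ modifies exactly one degree: $\deg_{T_{n+1}^S}(u)=\deg_{T_n^S}(u)+\ind_{\{u_n=u\}}$. Consequently at most one factor of $M_{n+1}^{\ell}$ differs from the corresponding factor of $M_n^{\ell}$: if $u_n\notin R$ then $M_{n+1}^{\ell}=M_n^{\ell}$, and if $u_n=v\in R$ then, using the elementary identity $[m+1]_r=\frac{m+1}{m-r+1}[m]_r$ with $m=\deg_{T_n^S}(v)+\alpha+\ell(v)-2$ and $r=\ell(v)$, the $v$-factor is multiplied by $\frac{\deg_{T_n^S}(v)+\alpha+\ell(v)-1}{\deg_{T_n^S}(v)+\alpha-1}$. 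Combining this with $\bbP(u_n=v\mid\calE_n)=\frac{\deg_{T_n^S}(v)-1+\alpha}{(1+\alpha)n-2}$ from~\eqref{probabilitylawvertexaffinePAmodel}, the factor $\deg_{T_n^S}(v)+\alpha-1$ cancels, and summing over $v\in R$ together with the ``$u_n\notin R$'' term telescopes to
\begin{align*}
\bbE\big[M_{n+1}^{\ell}\,\big|\,\calE_n\big]=\Big(1+\frac{|\ell|}{(1+\alpha)n-2}\Big)\,M_n^{\ell}.
\end{align*}
Since $\calW_{n+1}^{\ell}=\calW_n^{\ell}\big(1+\tfrac{|\ell|}{(1+\alpha)n-2}\big)^{-1}$ for every $n\ge k$ by definition, this yields $\bbE[M_{n+1}^{\ell}\calW_{n+1}^{\ell}\mid\calE_n]=M_n^{\ell}\calW_n^{\ell}$, so the process is a non-negative martingale.

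\textbf{Boundedness in $L^2$.} The key observation will be the pointwise inequality $(M_n^{\ell})^2\le M_n^{2\ell}$, where $M_n^{2\ell}:=\prod_{u\in R}[\deg_{T_n^S}(u)+\alpha+2\ell(u)-2]_{2\ell(u)}$. It follows factorwise from the elementary bound $\prod_{i=0}^{r-1}(q+i)^2\le\prod_{j=0}^{2r-1}(q+j)$, valid for every integer $r\ge 0$ and every real $q\ge 0$ (split the right-hand product into $\{0,\dots,r-1\}$ and $\{r,\dots,2r-1\}$ and use $q+r+i\ge q+i\ge 0$), applied with $q=\deg_{T_n^S}(u)+\alpha-1\ge\alpha>0$ and $r=\ell(u)$. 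The computation of the previous step is valid for any decoration of $R$, in particular for $2\ell$, so $(M_n^{2\ell}\calW_n^{2\ell})_{n\ge k}$ is also a non-negative martingale; as $M_k^{2\ell}$ is deterministic and $\calW_k^{2\ell}=1$, we get $\bbE[M_n^{2\ell}]=M_k^{2\ell}(\calW_n^{2\ell})^{-1}$ for all $n$. Hence
\begin{align*}
\bbE\big[(M_n^{\ell}\calW_n^{\ell})^2\big]\le(\calW_n^{\ell})^2\,\bbE\big[M_n^{2\ell}\big]
=M_k^{2\ell}\,(\calW_n^{\ell})^2(\calW_n^{2\ell})^{-1}
=M_k^{2\ell}\prod_{t=k}^{n-1}\frac{1+\frac{2|\ell|}{(1+\alpha)t-2}}{\big(1+\frac{|\ell|}{(1+\alpha)t-2}\big)^{2}}\le M_k^{2\ell},
\end{align*}
since $1+2a\le(1+a)^2$ makes each factor of the product at most $1$; moreover, as $\sum_t\big(\tfrac{|\ell|}{(1+\alpha)t-2}\big)^2<\infty$, the product converges to a strictly positive constant. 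This bounds $\bbE[(M_n^{\ell}\calW_n^{\ell})^2]$ uniformly in $n$, as required.

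\textbf{Main obstacle.} The martingale part is a routine one-step computation. The only delicate point is the $L^2$ estimate: writing down a direct second-moment recursion for $(M_n^{\ell})^2$ produces error terms of the form $\sum_{v\in R}\ell(v)^2/(\deg_{T_n^S}(v)+\alpha-1)$, which remain bounded but do not decay fast enough in $n$ to be absorbed into a Gronwall-type estimate with the correct exponent $\tfrac{2|\ell|}{1+\alpha}$. The comparison $(M_n^{\ell})^2\le M_n^{2\ell}$ circumvents this entirely, trading the second moment of one martingale for the first moment of another, which the martingale identity above computes exactly.
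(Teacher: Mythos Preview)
Your proof is correct and follows essentially the same approach as the paper: the one-step conditional expectation computation for the martingale property is the same, and the $L^2$-bound via the two inequalities $(M_n^{\ell})^2\le M_n^{2\ell}$ and $(\calW_n^{\ell})^2\le \calW_n^{2\ell}$ (equivalently, $(\calW_n^{\ell})^2(\calW_n^{2\ell})^{-1}\le 1$) is exactly the paper's argument, with your version spelling out the factorwise justifications more explicitly.
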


\begin{proof}
	We start by proving that $(M_n  \cdot \calW_n )_{n\geq k}$ is a martingale. We introduce $\calE_n$ the $\sigma$-algebra generated by $M_{k},\dotsc,M_n$. 
	When going from $n$ to $n+1$ one of two things can happen: 
	either the new vertex of $T_{n+1}^{S}$ is attached to a vertex $u \in  R$ or it is attached to some other vertex of $T_n^S$. 
	In the first case $M_n$ is multiplied by $\frac{\deg_{T_n^S} (u) +\alpha + \ell(u) -1 }{\deg_{T_n^S} (u) +\alpha -1}$;
	this occurs with probability $\frac{1}{(1+\alpha)n - 2} (\deg_{T_n^S} (u) +\alpha -1)$. 
	In the latter case the value of $M_n$ remains unchanged.  
	Thus
	\begin{align*}
		\frac{\bbE(M_{n+1}| \calE_n) - M_n}{M_n}
		= \sum_{u \in R}  
		\frac{\deg_{T_n^S} (u) +\alpha -1}{(1+\alpha)n - 2} 
		\cdot \Big(\frac{\deg_{T_n^S} (u) +\alpha -1 + \ell(u)}{\deg_{T_n^S} (u) +\alpha - 1} - 1\Big)
		= \frac{|\ell|}{(1+\alpha)n - 2}.
	\end{align*}
	It follows that $\bbE(M_{n+1}| \calE_n) = \big(1+ \frac{|\ell|}{(1+\alpha)n - 2}\big)\cdot M_n$, 
	which is to say that $M_n \cdot \calW_n$ is a martingale. \\

	That the martingale $M_n  \cdot \calW_n$ is bounded in $L^2$ follows from:
\begin{align*}
\forall u \in R, \ [d+\alpha+\ell(u)-2]_{\ell(u)}^{2} \leq [d+\alpha+2\ell(u)-2]_{2\ell(u)}
\end{align*}
and also:
\begin{align*}
(\calW_n^{\ell})^{2} \leq \calW_n^{2 \ell},
\end{align*}
where $2 \ell$ is the decoration such that for every $u \in R$, $(2 \ell) (u) = 2 \cdot \ell(u)$.
Indeed, the two above inequalities together imply that $(M_n^{\ell}  \cdot \calW_n^{\ell})^{2} \leq M_n^{2\ell}  \cdot \calW_n^{2\ell}$ for every $n \geq k$. But since $M_n^{2\ell}  \cdot \calW_n^{2\ell}$ is a martingale according to what we have just proved, it is bounded in $L^1$, thus $M_n  \cdot \calW_n$ is for its part bounded in $L^2$.
\end{proof}

\begin{corollary}\label{cor:martingale_cv}
	The following convergence holds almost surely and in $L^1$:
	\begin{align}\label{eq:as_cv_tau}
		 n^{-\frac{|\ell|}{1 + \alpha}}  \cdot M_n\xrightarrow[n \to \infty]{}C(k,|\ell|) \cdot \xi(R),
	\end{align}
	where $\xi(R)$ is a random variable of expectation $\displaystyle\bbE[\xi(R)] = \prod_{u \in R} [\deg_S(u) + \ell(u) +\alpha -2]_{\ell(u)}$
	and $C(k,|\ell|)  > 0$ is a universal constant depending only on $k$ and $|\ell|$.
\end{corollary}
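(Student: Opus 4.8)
The plan is to combine the martingale convergence theorem with the first-moment computation provided by Theorem~\ref{thm:first_moment} applied to the decorated tree $(R,\ell)$. First, the preceding lemma establishes that $(M_n\cdot\calW_n)_{n\geq k}$ is a martingale bounded in $L^2$; hence it converges almost surely and in $L^1$ (indeed in $L^2$) to some limiting random variable, which we call $C(k,|\ell|)\cdot\xi(R)$ after extracting a suitable normalising constant. Second, I would use the standard asymptotic $\calW_n^{\ell}\approx n^{-\frac{|\ell|}{1+\alpha}}$ (this is exactly~\eqref{polynomialgrowthfactor} with $w(\btau)$ replaced by $|\ell|$, obtained by the same elementary computation $\sum_{t=k}^{n-1}\log(1+\frac{|\ell|}{(1+\alpha)t-2})=\frac{|\ell|}{1+\alpha}\log n+O(1)$); more precisely $n^{\frac{|\ell|}{1+\alpha}}\calW_n^{\ell}$ converges to a positive finite constant, which I fold into the definition of $C(k,|\ell|)$. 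Combining these two facts yields the almost sure and $L^1$ convergence $n^{-\frac{|\ell|}{1+\alpha}}M_n\to C(k,|\ell|)\cdot\xi(R)$ in~\eqref{eq:as_cv_tau}.

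It then remains to identify $\bbE[\xi(R)]$. Since the convergence holds in $L^1$, we have $\bbE\big[n^{-\frac{|\ell|}{1+\alpha}}M_n\big]\to C(k,|\ell|)\cdot\bbE[\xi(R)]$. On the other hand, unwinding the definition of $M_n$,
\begin{align*}
	\bbE[M_n]=\bbE\Big[\prod_{u\in R}[\deg_{T_n^S}(u)+\alpha+\ell(u)-2]_{\ell(u)}\Big].
\end{align*}
I would relate this to $\bbE[\calF_{\boldsymbol{\rho}}(T_n^S,\{\cdot\}\subseteq S)]$ for an appropriate decorated tree $\boldsymbol{\rho}$, or more directly compute it via the Pólya-urn representation of the degree vector $(\deg_{T_n^S}(u):u\in R)$ given in the proof of Lemma~\ref{lemmaobservablesinseed}: conditioning on the initial degrees $\deg_S(u)$, the rescaled degrees converge, and the multiplicative structure of the replacement matrix gives that the limit factorises. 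Concretely, using that $M_n\cdot\calW_n$ is a martingale we get $\bbE[M_n\cdot\calW_n]=\bbE[M_k\cdot\calW_k]=M_k=\prod_{u\in R}[\deg_S(u)+\alpha+\ell(u)-2]_{\ell(u)}$, since at time $k$ we have $T_k^S=S$ and $\calW_k=1$. Passing to the limit, $C(k,|\ell|)\cdot\bbE[\xi(R)]=\lim_n\bbE[n^{-\frac{|\ell|}{1+\alpha}}M_n]=\big(\lim_n n^{-\frac{|\ell|}{1+\alpha}}\calW_n^{-1}\big)\cdot\bbE[M_k]$, and since $\lim_n n^{-\frac{|\ell|}{1+\alpha}}\calW_n^{-1}$ is precisely the constant absorbed into $C(k,|\ell|)$, we conclude $\bbE[\xi(R)]=\prod_{u\in R}[\deg_S(u)+\ell(u)+\alpha-2]_{\ell(u)}$. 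Positivity of $C(k,|\ell|)$ follows from the explicit form of the constant in the asymptotic $n^{\frac{|\ell|}{1+\alpha}}\calW_n^{\ell}\to C(k,|\ell|)>0$.

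The main obstacle I anticipate is bookkeeping: making sure the normalising constant $C(k,|\ell|)$ is consistently defined so that it is simultaneously the limit of $n^{\frac{|\ell|}{1+\alpha}}\calW_n^{\ell}$ and depends only on $k$ and $|\ell|$ (not on the shape of $R$ or on $\ell$ beyond its total $|\ell|$) — this is why $\calW_n^{\ell}$ depends on $\ell$ only through $|\ell|$, which is the key point to check. The probabilistic content (martingale convergence, $L^2$-boundedness, Pólya-urn asymptotics) is already in hand from the preceding lemma, so no further hard estimate is needed here; the argument is essentially an optional-stopping-plus-uniform-integrability identification of the limit's expectation.
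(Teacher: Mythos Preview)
Your proposal is correct and follows essentially the same route as the paper: apply the $L^2$-bounded martingale convergence theorem to $(M_n\calW_n)_{n\geq k}$, identify $\bbE[\xi(R)]$ via $\bbE[M_k\calW_k]=M_k$, and conclude by showing that $n^{\frac{|\ell|}{1+\alpha}}\calW_n$ converges to a positive constant depending only on $k$ and $|\ell|$. The mentions of Theorem~\ref{thm:first_moment} and of the P\'olya-urn representation from Lemma~\ref{lemmaobservablesinseed} are unnecessary detours; the direct martingale argument you ultimately settle on is exactly what the paper uses.
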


\begin{proof}
	As a martingale that is bounded in $L^2$, $(M_n \cdot \calW_n)$ 
	converges a.s. and in $L^1$ when $n\to \infty$ to some random variable $\xi(R)$.
	By the $L^1$ convergence, 
	\begin{align*}
		\bbE[\xi(R)] 
		= \bbE[M_k \cdot \calW_k] 
		= \prod_{u \in R} [\deg_S(u) + \ell(u) +\alpha -2]_{\ell(u)}.
	\end{align*}
	Finally, as in~\eqref{polynomialgrowthfactor}, a straightforward computation proves that  
	$\calW_n \cdot n^{\frac{|\ell|}{1 + \alpha}}$ converges as $n\to\infty$ to some constant depending only on $|\ell|$ and $k$. 
	This implies~\eqref{eq:as_cv_tau}.  
\end{proof}

\begin{remark}\label{rem:xi(u)}
	If we apply Corollary~\ref{cor:martingale_cv} to $R$ being formed of a single vertex $u$ with $\ell(u) = 1$, we obtain 
	\begin{align*}
		n^{-\frac{1}{1 + \alpha}}  \deg_{T_n^S}(u) \xrightarrow[n \to \infty]{} C(k,1) \cdot \xi(u), 
	\end{align*}
	where $\bbE[\xi(u)] = \deg_S(u) + \alpha -1$ and $C(k,1) > 0$ is defined as above. 
	Moreover, it may be shown that $\xi(u) >0$ a.s. (see \cite[Lemma 8.17]{hofstad_2016}), which is to say that $\deg_{T_n^S}(u)$ grows as $n^{\frac{1}{1 + \alpha}}$.
\end{remark}

We are now ready to prove Proposition~\ref{asymptoticsonquantitiesf}.

\begin{proof}[Proof of Proposition~\ref{asymptoticsonquantitiesf}]
	Fix some $S$, $\tau$, $\ell$ and $d$ as in the proposition. 
	Let $\phi$ be some perfect embedding of $(\tau,d)$ in $S$. 
	Write $R = \phi(\tau)$ and keep in mind that $\deg_S(\phi(u)) = d(u)$ for all $u \in \tau$.
	We will use the notation of Corollary~\ref{cor:martingale_cv} for $R$.  
	
	Using~\eqref{eq:f_deg} and the fact that $\deg_{T_n^S} (u) \to \infty$ for all $u \in \tau$, we have
	\begin{align*}
		\frac{ \prod_{u\in \tau}\, [y_{n}(u) - \alpha ]_{\ell(u)}}{M_n}
		= \prod_{u \in \tau} \frac{[\deg_{T_n^S}(\phi(u))-1]_{\ell(u)}}{[\deg_{T_n^S}(\phi(u))+ \ell(u) +\alpha -2]_{\ell(u)}}
		\xrightarrow[n\to\infty]{} 1. 
	\end{align*}
	Moreover, the ratio is always bounded from above by $1$%
	\footnote{This is proved individually for each term in the product: 
	it is clear when $\ell(u) \geq 1$; when $\ell(u) = 0$, the ratio is $1$}.
	Thus, by the dominated convergence theorem, 
	\begin{align*}
		\lim_{n \to \infty}n^{-\frac{|\ell|}{1 + \alpha}}\cdot  \mathbf{f}(k,n;\, d,\ell)
		= \lim_{n \to \infty}n^{-\frac{|\ell|}{1 + \alpha}}  \cdot \bbE [M_n]		
		= C(k,|\ell|) \cdot \prod_{u \in \tau} [d(u) + \ell(u) +\alpha -2]_{\ell(u)},
	\end{align*}
	where the last equality is due to Corollary~\ref{cor:martingale_cv}.
\end{proof}

\subsection{Proof of Theorem~\ref{thm:difference_from_seed}}

In proving Theorem~\ref{thm:difference_from_seed} we analyse differently the case where the two seeds $S$, $S'$ have same size and that where they have distinct sizes. We start with the former.

\begin{proof}[Proof of Theorem~\ref{thm:difference_from_seed} for seeds of common size]
    Fix $S,S'$ two distinct seeds of same size $k \geq 4$. 
    Let $\tau$ be a minimal tree which is not $(S,S')$-blind.
    Then, for any decoration $\ell$ of $\tau$, 
    equation~\eqref{corollaryemmaobservablesinseedequality} and Proposition~\ref{asymptoticsonquantitiesf} imply that
    \begin{align}\label{eq:f_D}
     	\lim_{n \to \infty} n^{-\frac{|\ell|}{1+\alpha}} \cdot  
    	\big( \bbE[\mathcal{F}_{\btau}(T_n^{S})]-\bbE[\mathcal{F}_{\btau}(T_n^{S'})] \big) 
    	= \sum_{d: \tau \to \bbN^*} \mathbf{f}(\infty;\, d,\ell) \cdot [D_{\tau,d}(S)-D_{\tau,d}(S')],
     \end{align}
    where 
    $\displaystyle \mathbf{f}(\infty;\, d,\ell) 
    := \lim_{n\to\infty} n^{-\frac{|\ell|}{1+\alpha}} \cdot \mathbf{f}(k,n;\, d,\ell) 
    =  C(k,|\ell|) \cdot\prod_{u \in \tau}\ [d(u) +\ell(u)+\alpha-2]_{\ell(u)}.$
    Thus, our goal is to find a decoration $\ell : \tau \to \bbN$ with $\ell(u) \geq 2$ for all $u \in \tau$, $|\ell| > 1 + \alpha$ 
    and such that the right-hand side of \eqref{eq:f_D} is non-zero. 
    
    
    Fix some arbitrary order $u_1,\dotsc,u_{r}$ for the vertices of $\tau$ (where $r:=\lvert \tau \rvert$)
    and write $\prec_\ell$ for the lexicographical order induced on the set of decorations of $\tau$. 
    Consider the set $\Delta$ of decorations $d$ of $\tau$ with $D_{\tau,d}(S) \neq D_{\tau,d}(S')$. 
    This set is finite, since $D_{\tau,d}(S)=D_{\tau,d}(S') = 0$ for all large enough decorations. Moreover, it is not void since $\tau$ is not $(S,S')$-blind. 
    Let $d_{\max}$ be the maximal element of $\Delta$ for $\prec_\ell$. 
    
    Consider some $d \in \Delta$ with $d \neq d_{\max}$ and let $j = \min\{ i : d(u_i) \neq d_{\max}(u_i)\}$. 
    Then 
    \begin{align} \label{eq:Gamma}
    	\frac{ \mathbf{f}(\infty;\, d,\ell)}{ \mathbf{f}(\infty;\, d_{\max},\ell)}
    	&= \prod_{i \geq j} 
    	\frac{[d(u_i) +\ell(u_i)+\alpha-2]_{\ell(u_i)}}{[d_{\max}(u_i) +\ell(u_i)+\alpha-2]_{\ell(u_i)}}	
    	\xrightarrow{} 0	
    \end{align} 
    when all $\ell(u_i)$ for $i>j$ are fixed and $\ell(u_j)\to \infty$. 
    
    Write $\Delta_j$ for the set of decorations $d \in \Delta$ with $d \neq d_{\max}$ and such that $\min\{ i : d(u_i) \neq d_{\max}(u_i)\} = j$. 
    Then $\Delta\setminus\{d_{\max}\} = \bigsqcup_{j=1}^{r} \Delta_j$. 
    Now let us construct a decoration $\ell$ with the necessary requirements by first choosing $\ell(u_r)$, then $\ell(u_{r-1})$ etc. 
    
    First fix $\ell(u_r) > 1 + \alpha$ large enough so that 
    \begin{align*}
    	\Big| \sum_{d \in \Delta_r} \frac{\mathbf{f}(\infty;\, d,\ell)}{\mathbf{f}(\infty;\, d_{\max},\ell)} 
    	\cdot [D_{\tau,d}(S)-D_{\tau,d}(S')]\Big| \leq \frac{1}{2r}.
    \end{align*}
    This is possible by~\eqref{eq:Gamma}. 
    Observe that the values of $\ell(u_i)$ for $i <r$ are irrelevant, as they do not appear in the above. 
    Once $\ell(u_r)$ is fixed, fix $\ell(u_{r-1}) \geq 2$ so that 
    \begin{align*}
    	\Big| \sum_{d \in \Delta_{r-1}} \frac{\mathbf{f}(\infty;\, d,\ell)}{\mathbf{f}(\infty;\, d_{\max},\ell)} 
    	\cdot [D_{\tau,d}(S)-D_{\tau,d}(S')]\Big| \leq \frac{1}{2r}.
    \end{align*}
    Again,~\eqref{eq:Gamma} shows that the values $\{\ell(u_i); i <r-1\}$ 
    do not appear in the above and that such a choice of $\ell(u_{r-1})$ is possible regardless of the value of $\ell(u_r)$ fixed before. 
    
    Continue as such until all values $\{\ell(u):u \in \tau\}$ are fixed. 
    The resulting decoration $\ell$ has $\ell(u) \geq 2$ for all $u \in \tau$ \footnote{ $\ell(u_r) \geq 2$ is ensured by the fact that $\ell(u_r) > 1+\alpha$}, it is such that $|\ell| \geq \ell(u_r) > 1+\alpha$, and satisfies
    \begin{align*}
    	\Big| \sum_{d \in \Delta_{j}} \frac{\mathbf{f}(\infty;\, d,\ell)}{\mathbf{f}(\infty;\, d_{\max},\ell)} 
    	\cdot [D_{\tau,d}(S)-D_{\tau,d}(S')]\Big| \leq \frac{1}{2r}, \qquad \forall j =1,\dots, r.
    \end{align*}
    When summing the above, we find
    \begin{align*}
    	\Big| \sum_{d \in \Delta \setminus \{d_{\max}\}} \mathbf{f}(\infty;\, d,\ell)\cdot [D_{\tau,d}(S)-D_{\tau,d}(S')]\Big| 
    	\leq \tfrac{1}{2}\, \mathbf{f}(\infty;\, d_{\max},\ell).
    \end{align*}
    Now, since $D_{\tau,d_{\max}}(S) \neq D_{\tau,d_{\max}}(S')$, this implies that 
    \begin{align*}
    	\Big| \sum_{d :\tau \to \bbN^* } \mathbf{f}(\infty;\, d,\ell)\cdot [D_{\tau,d}(S)-D_{\tau,d}(S')]\Big| 
    	\geq \tfrac{1}{2}\, \mathbf{f}(\infty;\, d_{\max},\ell) > 0, 
    \end{align*}
    which was the desired condition. 
\end{proof}


\begin{proof}[Proof of Theorem~\ref{thm:difference_from_seed} for seeds of different sizes]
    Fix two seeds $S,S'$ of sizes $k$ and $k'$, respectively, with $3 \leq k' <k$. 
    Due to the Markov property, the law of $(T_n^{S'})_{n \geq k}$ is a linear combination of laws $(T_n^{R})_{n \geq k}$ 
    with $R$ ranging over the different values taken by $T_k^{S'}$.
    Write $\calT_k$ for the set of trees of size $k$ with $\bbP(T_k^{S'} = R) \neq 0$. 
    
    First we claim that there exists two trees in $\calT_k$ with different maximal degree. 
    Indeed, one possible way of going from $S'$ to a tree in $\calT_k$ is to always attach the new vertices to leaves.
    In this scenario, the maximal degree of the resulting tree is the same as that of $S'$ (since that in $S'$ is at least $2$). 
    In conclusion 
    \begin{align*}
    	\bbP\big( \max \{ \deg_{T_k^{S'}}(u):\, u \in T_k^{S'}\} = \max \{ \deg_{S'}(u):\, u \in {S'}\}\big) > 0.
    \end{align*}
    Another is to always attach the new vertex to the one of maximal degree. The resulting maximal degree in $T_k^{S'}$ would then be $k - k'$ more than that in $S'$: 
    \begin{align*}
    	\bbP\big( \max \{ \deg_{T_k^{S'}}(u):\, u \in T_k^{S'}\} = \max \{ \deg_{S'}(u):\, u \in {S'}\} + k-k'\big) > 0.
    \end{align*} 
    Thus, there exist two trees in $\calT_k$ with distinct maximal degree, as claimed. 
    
    Fix $\tau$ to be the tree formed of a single vertex. 
    A decoration  for $\tau$ is then simply an integer number. 
    This tree has no subtree, hence Proposition~\ref{propositionblindtreePAmodel} applies to it. 
    For any decoration $\ell \in \bbN$ of~$\tau$, equation~\eqref{eq:f_D} adapts to
    \begin{align*}
     	\lim_{n \to \infty} n^{-\frac{\ell}{1+\alpha}} \,\big( \bbE[\mathcal{F}_{\btau}(T_n^{S})]-\bbE[\mathcal{F}_{\btau}(T_n^{S'})] \big) 
    	=  \sum_{d \in \bbN^*} \mathbf{f}(\infty;\, d,\ell) 
    	\sum_{R \in \calT_k} \bbP(T_k^{S'} = R)\cdot [D_{d}(S)-D_{d}(R)],
     \end{align*}
    where
    $\displaystyle \mathbf{f}(\infty;\, d,\ell) = C(k,\ell) \cdot[d +\ell+\alpha-2]_{\ell}$
    and $D_{d}(S)$ is simply the number of vertices of degree $d$ in~$S$. 
    
    Write $d_{\max}$ for the maximal number $d \in \bbN^*$ with 
    \begin{align}\label{eq:diff_deg_seq}
    	\sum_{R \in \calT_k} \bbP(T_k^{S'} = R)\cdot [D_{d}(S)-D_{d}(R)] \neq 0.
    \end{align}
    First observe that the set of values $d$ satisfying~\eqref{eq:diff_deg_seq} is finite 
    since $D_{d}(S) = D_{d}(R) =0$ for all $R \in \calT_k$ provided $d$ is large enough. 
    Second, notice that there exist at least one $d$ satisfying~\eqref{eq:diff_deg_seq} since the maximal degree of $R \in \calT_k$ is not constant. 
    
    Now, by~\eqref{eq:Gamma}, 
    \begin{align*}
    	\lim_{\ell\to\infty}
    	\sum_{d \in \bbN^*} \frac{\mathbf{f}(\infty;\, d,\ell)}{\mathbf{f}(\infty;\, d_{\max},\ell) }
    	\sum_{R \in \calT_k} \bbP(T_k^{S'} = R)\cdot [D_{d}(S)-D_{d}(R)]
    	&\\
    	=\sum_{R \in \calT_k} \bbP(T_k^{S'} = R)\cdot [D_{d_{\max}}(S)-D_{d_{\max}}(R)] &\neq 0.
    \end{align*}
    In conclusion, one may fix $\ell > 1+ \alpha$ (and implicitly $\ell \geq 2$) so that 
    \begin{align*}
    	\lim_{n \to \infty} n^{-\frac{\ell}{1+\alpha}} \,\big( \bbE[\mathcal{F}_{\btau}(T_n^{S})]-\bbE[\mathcal{F}_{\btau}(T_n^{S'})] \big)
    	\neq 0. 
    \end{align*}
\end{proof}

\subsection{Proof of Theorem~\ref{thm:main}}\label{sec:proof_main}

\begin{proof}
	Fix $S,S'$ two distinct seeds of sizes at least $3$. 
	Let $\btau$ be the decorated tree given by Theorem~\ref{thm:difference_from_seed} for these two seeds. 
	Due to our assumptions on $\btau$, Theorem~\ref{thm:second_moment} applies to it, and we have 
    \begin{align*}
       \bbE\big[\calF_{\btau}(T_{n}^S)^2\big] = \calO\big( n^{\frac{2 |\ell|}{1+\alpha}}\big) \quad \text{and} \quad
        \bbE\big[\calF_{\btau}(T_{n}^{S'})^2\big] = \calO\big( n^{\frac{2 |\ell|}{1+\alpha}}\big).
    \end{align*}
    Lemma~\ref{lem:d_tv_moments} then yields
    $\displaystyle\liminf_{n\to \infty}d_{TV}(\mathcal{F}_{\btau}(T_n^{S}),\mathcal{F}_{\btau}(T_n^{S'})) >0$, 
    which in turn implies Theorem~\ref{thm:main}. 
\end{proof}

\section{Open problems and future research}\label{sec:open_problems}

\paragraph{Other attachment mechanisms}
As mentioned in the introduction, our approach uses the fact that the attachment mechanism is affine to couple the evolution of trees starting from distinct seeds of same size; 
see the coupling of Section~\ref{sec:coupling} and its use in Section~\ref{afirstgeneralresultaroundtheseed} and \ref{sec:blindtreesandarefinement} to deduce Corollary~\ref{cor:difference_in_seed}.

Suppose now that we consider a different attachment model, 
where the new vertex is attached to a vertex $u \in T_n$ with probability proportional to $g(\deg_{T_n}(u))$ for some function $g :\bbN \to (0,+\infty)$. 
Thus \eqref{probabilitylawvertexaffinePAmodel} becomes
\begin{align*}
	\mathbb{P}(u_N = u \,\vert\,T_{k}^{S}, \dots, T_{N}^{S} \text{ with } T_{N}^{S}=T) 
	= \frac{g(\deg_{T}(u))}{\sum_{v\in T}g(\deg_{T}(v))} \qquad \forall u\in T.
\end{align*}
Notice that, when $g$ is not affine, the denominator above depends on the structure of $T$, 
and the attachment probabilities cease to be a local function of $u$. In other words, the sequence of drawn vertices loses its exchangeability. 
As a consequence, it is not possible anymore to construct a coupling between sequences 
$(T_{n}^{S})_{n \geq k}$ and $(T_{n}^{S'})_{n \geq k}$ as in Section \ref{sec:coupling}, where $S$ and $S'$ are two seeds of size~$k$. 
Nevertheless, if $S$ and $S'$ have same degree sequences, then regardless of the form of $g$, 
one may couple the evolution of $T_n^S$ and $T_n^{S'}$ so that the sets of connected components of $T_n^S \setminus S$ and $T_n^{S'} \setminus S'$ are identical 
(here $T_n^S\setminus S$ stands for the graph obtained from $T_n^S$ by removing all edges of $S$).
The equivalent of Corollary~\ref{cor:difference_in_seed} may then be proved in the same manner. 

This hints to the possibility of seed recognition (\emph{i.e.} Theorem \ref{thm:main}) for a much larger set of models. 
One particular example of interest is when $g(k)= k^{\beta}$ for some $\beta \in (0,1)$ \cite{krapivsky2000connectivity}. 
In such models, the largest degree in $T_n^S$ is of order $(\log n)^{\frac1{1-\beta}}$, hence much smaller than in the affine case \cite[Thm.~22]{Bha07}. 
The case $\beta = 1$ is that of the linear preferential attachment model treated in \cite{curien2015scaling}; 
when $\beta = 0$ we obtain the uniform attachment model of \cite{bubeck2017}.
For $\beta \in (0,1)$ we expect the same type of result to hold, and plan to investigate this in future work. 

Let us also mention that, when $\beta>1$ a single vertex of $T_n^{S}$ has degree tending to infinity, all other degrees are a.s. bounded by a constant, as is the diameter of $T_n^{S}$ \cite[Thm.~1.2]{Oli05}. 

\paragraph{Finding the seed}
Our result may be understood as follows: given a large (but uniform in $n$) number of samples of $T_n^{S}$, one may recover $S$ with high precision.
A related question is to locate $S$ given a single instance of $T_n^{S}$. 
One may not hope to do this with high probability, but is it possible to do it with uniformly positive probability? 
Results in this direction were obtained in \cite{Bub17,dev18,Lug18} for the uniform and linear preferential attachment models. 

One may ask whether having a uniformly positive chance of locating the seed is equivalent to Theorem~\ref{thm:main}.
Moreover, for specific cases such as the $\alpha$-PA, is there an explicit algorithm that locates the seed?

\paragraph{Beyond trees}
Finally, one may consider randomly growing graphs, rather than trees. 
Indeed, imagine a model where vertices are added one by one to a growing graph, with each new vertex being attached to each old vertex independently, 
with a probability depending on the size of the graph and on the degree of the old vertex. 
This offers great freedom for the choice of the attachment probability, but the resulting graph ceases to be a tree; it may even have multiple connected components. These aspects render the study of such models more delicate. However, we believe that for certain attachment mechanisms - affine for instance - an equivalent of Theorem~\ref{thm:main} would remain valid.

\bibliographystyle{plain}
\bibliography{biblioaffinecase}

\end{document}